\documentclass[11pt,letterpaper]{amsart}

\usepackage{amsmath,amssymb,amsfonts,amsthm,mathrsfs,mathtools}
\usepackage[margin=1.15in]{geometry}
\usepackage{hyperref}
\usepackage{enumitem}
\usepackage{microtype}

\numberwithin{equation}{section}

\newtheorem{theorem}{Theorem}[section]
\newtheorem{lemma}[theorem]{Lemma}
\newtheorem{proposition}[theorem]{Proposition}
\newtheorem{corollary}[theorem]{Corollary}

\theoremstyle{definition}
\newtheorem{definition}[theorem]{Definition}
\newtheorem{remark}[theorem]{Remark}

\newcommand{\R}{\mathbb{R}}
\newcommand{\C}{\mathbb{C}}
\newcommand{\N}{\mathbb{N}}
\newcommand{\Z}{\mathbb{Z}}
\newcommand{\Ree}{\operatorname{Re}}

\newcommand{\e}{\mathrm{e}}

\newcommand{\Res}{\operatorname{Res}}
\newcommand{\Err}{\operatorname{Err}}

\title{Admissibility of H\"ormander--Bernhardsson extremal zeros}
\author{Khai-Hoan Nguyen-Dang}
\address{Morningside Center of Mathematics, Chinese Academy of Sciences, No.\ 55, Zhongguancun East Road, Beijing, 100190, China}
\email{khaihoann@gmail.com}

\subjclass[2020]{30D15, 33E30, 34A30, 42A05}
\keywords{H\"ormander--Bernhardsson extremal function, Quine--Heydari--Song's admissibility, heat trace, spectral zeta function}
\date{\today}

\begin{document}
\maketitle

\begin{abstract}
Let $\varphi$ be the H\"ormander--Bernhardsson extremal function, and let $(\pm\tau_n)_{n\ge1}$ be its real zeros. Using the recent analytic description of the zero set ${\tau_n}$, we prove that the squared zeros $\lambda_n=\tau_n^{2}$ form an admissible sequence in the sense of Quine--Heydari--Song: the heat trace $\Theta(t)=\sum_{n\ge1}e^{-\lambda_n t}$ has a full $t\to0^{+}$ expansion in pure powers of $t^{1/2}$. The proof is based on an analytic normal form
\[
\lambda_n=\Bigl(n+\tfrac12\Bigr)^2+q!\Bigl(\Bigl(n+\tfrac12\Bigr)^{-2}\Bigr),
\]
a uniform Taylor expansion in $t$, and a Mellin--Hurwitz zeta analysis of the resulting weighted Gaussian sums. As applications we obtain meromorphic continuation and special-value information for the associated spectral zeta function and zeta-regularized product, sharp large-parameter asymptotics for the canonical product $\prod_{n}(1+z/\lambda_n)$. In particular, we deduce the conjecture by
Bondarenko--Ortega-Cerd\`a--Radchenko--Seip for the special values of the Dirichlet-type series attached to $\varphi$. We also establish a parity dichotomy: sequences $(\tau_n^m)$ are QHS--admissible
for even $m$, while for odd $m$ a nonzero $t\log t$ term obstructs admissibility.
\end{abstract}

\tableofcontents

\section{Introduction}

\subsection{The H\"ormander--Bernhardsson extremal function and its zero set}

A classical theme in complex and harmonic analysis is that extremal problems for
entire functions of prescribed exponential type encode rigid spectral data: zeros,
canonical products, and Dirichlet series that behave as if they came from a
one--dimensional spectrum.  A particularly striking instance is the extremal
problem considered by H\"ormander and Bernhardsson \cite{HB93}: among entire functions
$f$ of exponential type at most $\pi$ satisfying the normalization $f(0)=1$, minimize
the $L^1$--norm on the real line.  Equivalently, in Paley--Wiener language, one looks
for the smallest $PW^1$--norm under the constraint of a fixed point evaluation.
The unique extremizer (up to the natural symmetries) is an even entire function
$\varphi$ of type $\pi$, normalized by $\varphi(0)=1$, with real simple zeros
\[
\{\pm\tau_n\}_{n\ge1}\subset\R,\qquad 0<\tau_1<\tau_2<\cdots\to\infty.
\]
The analysis of this function has recently undergone substantial development in the
works of Bondarenko--Ortega-Cerd\`a--Radchenko--Seip \cite{BOCRS,BOCRS2024_HBI}, who obtain a
factorization $\varphi(z)=\Phi(z)\Phi(-z)$ into canonical products, as well as a
nonlinear functional equation of inversion type intertwining the values of $\Phi$
at $z$ and at $(2\pi i C z)^{-1}$ for an explicit constant $C>0$. The authors also show precise asymptotic information about
the real zeros $\pm\tau_n$ of the H\"ormander--Bernhardsson extremal function.
They indicate that the deviations $\tau_n-(n+\tfrac12)$ are very small in a global sense (in particular, of $\ell^{2}$-type), placing $\{\tau_n\}$ in the classical regime of \emph{stable} perturbations of the half-integer lattice. 

One of the most fruitful byproducts of this structure is the emergence of two Dirichlet
series built from the zeros,
\[
L_+(s):=\sum_{n\ge1}\tau_n^{-s},
\qquad
L_-(s):=\sum_{n\ge1}\frac{(-1)^n}{\tau_n^{\,s}},
\qquad (\Re s>1),
\]
whose analytic continuation, special values, and residues reflect fine information
about $\varphi$ and its functional equation.

The present paper is concerned with a different, and in a sense complementary, aspect
of the same zero set: the \emph{admissibility} of sequences extracted from
$\{\tau_n\}$ in the sense of Quine--Heydari--Song \cite{QHS}.
This notion belongs to the general calculus of zeta regularization and is designed
to capture precisely the analytic input that makes the spectral zeta function and
zeta--regularized products behave as in elliptic spectral theory.
Our goal is to place the H\"ormander--Bernhardsson zeros into that calculus, and then
use the resulting zeta machinery to settle conjecture 1 raised in \cite{BOCRS}.

\subsection{Heat traces, zeta regularization, and admissible sequences}

Given a sequence $(X_n)$ in the right half--plane, one may form the heat trace
\[
H(t):=\sum_{n}e^{-X_n t},\qquad t>0,
\]
together with the associated spectral zeta function
\[
\zeta_X(s):=\sum_n X_n^{-s}.
\]
In geometric and analytic spectral theory, when $(X_n)$ consists of the eigenvalues of a
positive elliptic operator, $H(t)$ admits a full small--time asymptotic expansion and
$\zeta_X(s)$ extends meromorphically to the complex plane. This is one of the standard
routes to the zeta--regularized determinant $\det_\zeta X:=\exp(-\zeta_X'(0))$, which
originates in the work of Ray--Singer and its subsequent refinements
\cite{RaySinger71,Seeley67,Gilkey95,Voros87}.
Quine--Heydari--Song \cite{QHS} abstracted precisely the hypotheses on $(X_n)$ needed to
run this Mellin transform machinery without geometric input.  Their definition
requires the convergence of $H(t)$ for $t>0$, a \emph{full} polyhomogeneous expansion
as $t\to0^+$ in pure powers (no logarithms), and a mild growth condition that precludes
pathological cancellations.  Under these conditions one recovers, in a purely analytic
setting, the familiar package: meromorphic continuation of $\zeta_X(s)$, residue coefficient identities, special value formulae at nonpositive integers, and well--posed zeta regularized products \cite[\S2--\S4]{QHS}.

From this viewpoint it is natural to ask whether the sequences derived from the
H\"ormander--Bernhardsson zeros behave spectrally.  The first obstruction is that the
zero set $\{\tau_n\}$ is \emph{not} a quadratic lattice; rather, it is a delicate
perturbation of the half--integers.  In \cite{BOCRS} it is shown that
\[
\tau_n = n+\frac12 - d_n,
\qquad d_n\in\ell^2,
\]
a perturbative condition of Paley--Wiener type which is powerful for
uniqueness and interpolation in $PW^1$ and for controlling canonical products.
However, $\ell^2$--closeness to a lattice does not by itself imply that the associated
heat trace is polyhomogeneous: logarithmic terms may occur through harmonic weights,
and such logarithms are \emph{forbidden} by the strict Quine--Heydari--Song definition.
Understanding precisely when logarithms appear, and when they do not, is one of the
central points of the present work.

\subsection{Main results: admissibility, parity, and a functional equation for $L_+$}

Our basic spectral object is the squared zero sequence
\[
\lambda_n:=\tau_n^2,\qquad n\ge1,
\]
and the corresponding heat trace
\[
\theta(t):=\sum_{n\ge1}e^{-\lambda_n t}.
\]
Our first main theorem proves that $(\lambda_n)$ is admissible in the sense of
Quine--Heydari--Song.

\begin{theorem}[Heat trace expansion and admissibility]\label{thm:intro-adm}
The heat trace $\theta(t)=\sum_{n\ge1}e^{-\tau_n^2 t}$ admits a full small--time asymptotic
expansion as $t\to0^+$ in \emph{pure} half--integer powers:
\[
\theta(t)\sim \sum_{\nu=0}^\infty c_\nu\,t^{j_\nu},
\qquad j_\nu\in\tfrac12\Z,\quad j_0<j_1<\cdots\to+\infty.
\]
Consequently, $(\tau_n^2)_{n\ge1}$ is admissible in the sense of \cite{QHS}.
\end{theorem}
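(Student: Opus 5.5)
The plan is to import the normal form announced in the abstract from the analytic description of the zeros in \cite{BOCRS,BOCRS2024_HBI}. Writing $m_n:=n+\tfrac12$, the description of $\tau_n$ gives $\tau_n=m_n+\delta(m_n)$, where $\delta$ has an asymptotic expansion in odd negative powers of $m_n$ (equivalently, $\tau_n$ is the value at $m_n$ of a function analytic near $\infty$ and odd in $m$). Squaring gives
\[
\lambda_n=m_n^2+r(m_n^{-2}),\qquad r(w)=\sum_{k\ge0}a_k w^{k},
\]
with $r$ analytic in a small disc. The point to verify first is the \emph{parity}: $\lambda_n$ must depend on $m_n$ only through $m_n^2$. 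This is exactly what will later forbid $t\log t$ terms, and it is the source of the even/odd dichotomy for $\tau_n^m$. If the earlier results only give an asymptotic (not convergent) expansion with explicit remainders $O(m_n^{-2K})$, that is enough: the argument below only uses finitely many terms at a time.

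Next, I would split $e^{-\lambda_n t}=e^{-m_n^2 t}\,e^{-t\,r(m_n^{-2})}$ and expand the second factor uniformly in $n$:
\[
e^{-t r(m_n^{-2})}=\sum_{j=0}^{J}\frac{(-t)^j}{j!}\,r(m_n^{-2})^j+O(t^{J+1}),
\]
and then expand each $r(m_n^{-2})^j$ in powers $m_n^{-2k}$ up to order $K$, with a remainder $O(m_n^{-2K-2})$. After summing over $n$, the remainders contribute $O(t^{J+1})$, and $t^{j}\sum_n m_n^{-2K-2}e^{-m_n^2t}$ is likewise bounded. This reduces $\theta(t)$, modulo $O(t^{N})$, to finitely many terms of the form $t^{j}\,S_k(t)$, where
\[
S_k(t)=\sum_{n\ge1}m_n^{-2k}e^{-m_n^2 t}.
\]
Finitely many small $n$ can be handled separately by Taylor expansion in $t$, which produces only integer powers.

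The core step is the Mellin–Hurwitz analysis of $S_k$. Its Mellin transform is $\Gamma(s)\,\zeta_H(2s+2k,\tfrac32)$, where $\zeta_H$ is the Hurwitz zeta function. Shifting the contour to the left picks up two families of poles. The Hurwitz pole at $s=\tfrac12-k$ gives a term $c\,t^{k-1/2}$. The Gamma poles at $s=-\ell$ give $t^{\ell}$ terms with coefficients $(-1)^\ell\zeta_H(2k-2\ell,\tfrac32)/\ell!$. The obstruction arises when the two families collide: for $k\ge1$, the Hurwitz pole sits at $s=\tfrac12-k$, a half-integer, while the Gamma poles sit at the nonpositive integers, so they never coincide and no double poles, hence no logarithms, appear. (If instead the expansion contained odd powers $m_n^{-2k-1}$, the Hurwitz pole would land on a Gamma pole and produce $t^{\ell}\log t$. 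This is the odd-$m$ obstruction.) Standard decay of $\Gamma$ together with polynomial growth of $\zeta_H$ in vertical strips justifies the contour shift to arbitrary depth.

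Collecting terms gives $\theta(t)\sim\sum c_\nu t^{j_\nu}$ with $j_\nu\in\tfrac12\Z$ and $j_0=-\tfrac12$. The remaining QHS conditions are immediate. Convergence of $\theta(t)$ for $t>0$ follows from $\lambda_n\asymp n^2$. The growth condition follows from $\lambda_n>0$ being increasing with $\lambda_n\sim n^2$.

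The main obstacle is the first step: extracting from the earlier analytic description of $\{\tau_n\}$ that $\tau_n^2-m_n^2$ is a function of $m_n^{-2}$ alone, with uniform remainder bounds. Everything downstream is routine once this even normal form is in hand.
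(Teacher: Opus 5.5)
Your overall architecture matches the paper's. The common steps are the even normal form, a Taylor expansion in $t$ uniform in $n$, reduction to the sums $S_k(t)$, and the Mellin--Hurwitz contour shift, in which the half-integer pole $s=\tfrac12-k$ never meets a pole of $\Gamma$. The normal form is also not the main obstacle: the paper gets it in two lines from the oddness of $\rho$ in $\tau_n=m_n-\rho(m_n^{-1})$. Writing $\rho(z)=z\sigma(z^2)$ gives $r(w)=-2\sigma(w)+w\sigma(w)^2$, which is analytic for $|w|<4$.

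\textbf{The gap.} In the middle step you truncate the $w$-expansion $r(w)^j=\sum_k c_{j,k}w^k$ at order $K$ and assert that the remainder is negligible modulo $O(t^N)$. This is false. Let $g_{j,K}(w)$ be $r(w)^j$ minus its Taylor polynomial of degree $K$. The discarded piece is $t^j\sum_n g_{j,K}(m_n^{-2})e^{-m_n^2t}$. As $t\to0^+$ the sum tends to the constant $\sum_n g_{j,K}(m_n^{-2})$, which is in general nonzero, so the piece is of order $t^j$, not $O(t^N)$. Equivalently, each $S_k$ with $k\ge1$ has the nonzero coefficient $(-1)^\ell\zeta(2k-2\ell,\tfrac32)/\ell!$ at $t^\ell$ for every $\ell<k-\tfrac12$. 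So the discarded tail $\sum_{k>K}c_{j,k}S_k$ feeds into the coefficients of $t^j,t^{j+1},\dots$: truncating in $w$ is not truncating in $t$. This is exactly why the paper stresses that it does \emph{not} truncate $q$ in $u$. (A minor slip: the $t$-Taylor remainder is $O(t^{J+1})\sum_n e^{-m_n^2t}=O(t^{J+1/2})$, not $O(t^{J+1})$.)

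\textbf{How to repair it.} There are two ways.
\begin{enumerate}
\item The paper keeps the full convergent series $\sum_{k\ge0}c_{j,k}S_k(t)$ and proves the needed uniformity in $k$. On the shifted line $\Re s=-K-\tfrac12$ one has $\bigl|\sum_n m_n^{-2(k+s)}\bigr|\ll_K(4/9)^k$ for $k\ge K+2$. Hence the contour remainders are $\ll_K(4/9)^k\,t^{K+1/2}$ uniformly in $k$, and the integer-power coefficients are likewise $O((4/9)^k)$. Cauchy's estimate gives $\sum_k|c_{j,k}|(4/9)^k<\infty$, so the expansions of the $S_k$ may be summed termwise.
\item Alternatively, keep your truncation and add the following lemma. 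For any sequence $\epsilon_n=O(m_n^{-2K-2})$ and any $\alpha\in[K,K+\tfrac12)$,
\[
\sum_n\epsilon_n e^{-m_n^2t}=\sum_{\ell=0}^{K}\frac{(-t)^\ell}{\ell!}\sum_n\epsilon_n m_n^{2\ell}+O(t^{\alpha}).
\]
This follows from $\bigl|e^{-x}-\sum_{\ell\le K}(-x)^\ell/\ell!\bigr|\ll_K x^{\alpha}$ for $x\ge0$ and $K\le\alpha\le K+1$, together with convergence of $\sum_n m_n^{2\alpha-2K-2}$. The truncation error then contributes only integer powers $t^{j},\dots,t^{j+K}$ plus $O(t^{j+\alpha})$. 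Letting $K$ grow yields the full pure-power expansion.
\end{enumerate}
The second route is also what actually justifies your remark that a merely asymptotic (non-convergent) expansion of $\tau_n$ would suffice. As written, that remark rests on the same incorrect estimate.
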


As in the general QHS calculus \cite{QHS}, this admissibility statement immediately yields
a meromorphic continuation of the spectral zeta function
$Z(s):=\sum_{n\ge1}\lambda_n^{-s}=L_+(2s)$, residue coefficient identities,
and special value formulae at non-positive integers. In addition, it provides a robust
coefficient--extraction mechanism at infinity for the canonical product
$W(\lambda)=\prod_{n\ge1}(1+\lambda/\lambda_n)$.
These consequences form the analytic backbone of our proof of the conjectural symmetry
of $L_+$ discovered in \cite{BOCRS}.

\begin{theorem}[Conjecture~1 of \cite{BOCRS}]\label{thm:intro-conj}
Let $C$ be the constant appearing in the inversion functional equation for $\Phi$ in \cite{BOCRS}.
Then for every $k\in\Z$,
\[
L_+(-2k)=\frac{L_+(2k)}{(2\pi i C)^{2k}}.
\]
\end{theorem}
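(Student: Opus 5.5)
Taking Theorem~\ref{thm:intro-adm} as given, the plan is to compare two expansions of the same function. Write $\Phi(z)=\prod_{n\ge1}(1+z/\tau_n)e^{-z/\tau_n}$ (up to the normalization in \cite{BOCRS}), so that
\[
\log\Phi(z)=\sum_{m\ge2}\frac{(-1)^{m+1}}{m}L_+(m)\,z^m
\]
near $z=0$. The approach has three steps.

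\textbf{Step 1 (expansion at $0$).} By admissibility, $Z(s)=L_+(2s)$ continues meromorphically to $\C$. The QHS machinery then gives the large-$\lambda$ asymptotic expansion of the zeta-regularized canonical product $\log W(\lambda)$, and similarly of $\log\Phi(z)$ for $z$ in sectors away from the negative real axis. The coefficients of the pure powers $z^{-2k}$ in that expansion are $L_+(-2k)$, up to the explicit factors $(-1)^{k+1}/(2k)$, together with finitely many $z^{j}\log z$ and constant terms coming from the poles and the value $L_+'(0)$. This is the standard mechanism: a Mellin--Barnes representation of $\log\Phi$, shifting the contour to the left and picking up residues of $\Gamma(s)\Gamma(1-s)L_+(s)z^s$ at negative integers. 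At even $s=-2k$ the residue is governed by $L_+(-2k)$. At odd $s=-(2k+1)$ one must use the trivial-zero information from the parity structure; if those values are nonzero they contribute $z^{-odd}$ terms, which is harmless.

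\textbf{Step 2 (inversion).} Substitute $z\mapsto w=(2\pi i C z)^{-1}$ in the functional equation of \cite{BOCRS}. This relates $\log\Phi(z)$ for small $z$ to $\log\Phi(w)$ for large $w$, plus elementary terms such as polynomial or logarithmic pieces and possibly $\log\Phi(-w)$. Inserting the Step~1 expansion of $\log\Phi(w)$ yields an asymptotic series in powers of $z$ as $z\to0$. This series must agree with the convergent Taylor series of $\log\Phi$ at $0$. Matching the coefficients of $z^{2k}$ gives
\[
\frac{L_+(2k)}{2k}\,(\pm1)=\frac{L_+(-2k)}{2k}(2\pi iC)^{2k}(\pm1),
\]
which is the identity for $k\ge1$. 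The case $k\le -1$ is the same identity read backwards, and $k=0$ is trivial. The sign bookkeeping between $\Phi(z)$ and $\Phi(-z)$ must be tracked carefully; it is cleanest to work with $\varphi=\Phi(z)\Phi(-z)$, which kills the odd powers, together with the odd part of the functional equation.

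\textbf{Main obstacle.} The difficulty is justifying that the large-$w$ expansion from Step~1 is a genuine asymptotic expansion, uniform in a sector containing the ray $\{(2\pi iCz)^{-1}: z\to0\}$. This ray is the imaginary direction, where $\Phi$ has no zeros but $\varphi$ has exponential type. I would first derive the expansion for $W(\lambda)$ with $\lambda$ off the negative axis, using the QHS theorem. Taking $\lambda=-z^2$ with $z$ purely imaginary puts $\lambda$ on the positive axis, which is exactly the safe region. The remaining task is to check that the inversion equation for $\Phi$ can be symmetrized into an equation for $W(-z^2)=\varphi(z)$ restricted to imaginary $z$.
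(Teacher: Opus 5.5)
Your overall strategy matches the paper's. You compare the Taylor coefficients of $\log\varphi$ at $0$, which are $-L_+(2k)/k$, with the asymptotic coefficients of $\log\varphi(w)=\log W(-w^2)$ at infinity, which are $-L_+(-2k)/k$ by admissibility and Watson's lemma, linked by $w=Tz$. But Step~2, where the identity actually comes from, fails as written, and the missing repair is exactly what you defer as ``the remaining task''.

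\textbf{Why Step~2 fails.} The functional equation (1.6) expresses $\Phi(z)\e^{1/(4Cz)}$ as a \emph{sum} of a multiple of $\Phi(Tz)$ and a multiple of $\Phi(-Tz)$. It does not give $\log\Phi(z)$ as $\log\Phi(w)$ plus elementary terms. On the relevant ray $z>0$, $Tz$ is purely imaginary and $\Phi$ has real coefficients, so the two summands are complex conjugates of each other. Neither dominates, and the logarithm cannot be split. Your route would also need the large-$w$ asymptotics of $\log\Phi$ itself, which admissibility of $\tau_n^2$ does not provide. Note too that in \cite{BOCRS} $\Phi(z)=\prod_n(1+(-1)^n z/\tau_n)$, not $\prod_n(1+z/\tau_n)\e^{-z/\tau_n}$. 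Its odd Taylor coefficients involve $L_-(m)$, so your Mellin--Barnes formula with $L_+$ is for the wrong function. Both definitions give the same $\varphi$, but (1.6) holds only for the BOCRS $\Phi$.

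\textbf{The missing idea: move the sum to the small-$z$ side.} Set $A(z)=\e^{1/(4Cz)}\Phi(z)$. Then (1.6) reads $2\sqrt{\pi C}\,zA(z)=\e^{-i\pi/4}A(Tz)+\e^{i\pi/4}A(-Tz)$. Writing this at $z$ and at $-z$ gives a $2\times2$ linear system for $A(\pm Tz)$. Multiplying the two solutions, the cross term cancels because $\e^{i\pi/2}+\e^{-i\pi/2}=0$, and you get $\varphi(Tz)=\pi Cz^2\bigl(A(z)^2+A(-z)^2\bigr)$. In a sector $|\arg z|<\varepsilon$ the ratio $A(z)/A(-z)=\e^{1/(2Cz)}\Phi(z)/\Phi(-z)$ is exponentially large, so
\[
\log\varphi(Tz)=\log(\pi C)+2\log z+\log\varphi(z)+\frac{1}{2Cz}+h(z)+O(|z|^N)\quad\text{for every }N,
\]
where $h(z)=\log\bigl(\Phi(z)/\Phi(-z)\bigr)$ is holomorphic and \emph{odd}. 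The oddness of $h$ is what guarantees that only $\log\varphi(z)$ contributes to $[z^{2k}]$, $k\ge1$, on the right. Your unspecified ``$(\pm1)$'' factors and ``odd part of the functional equation'' stand in for this argument.

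Once this identity is in hand, the uniformity issue you call the main obstacle is routine. For $z$ in the sector, $w=Tz$ lies near the negative imaginary axis, so $-w^2\in\Sigma_\delta$, and the sectorial Watson-lemma expansion of $\log W$ applies directly.
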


The proof of Theorem~\ref{thm:intro-conj} illustrates the conceptual role of admissibility
in this problem: the functional equation for the extremal function yields a precise
inversion identity for $\varphi$, which enforces a parity constraint on the even
coefficients in the asymptotic expansion of $\log\varphi(Tz)$ in a small sector
($Tz=(2\pi i C z)^{-1}$). Admissibility supplies the missing analytic input that
identifies those even coefficients with zeta special values at negative integers via
coefficient extraction at infinity.  In this way, the conjectured relation between
$L_+(2k)$ and $L_+(-2k)$ becomes an equality between two independently computable
coefficient functionals.

Beyond squares, we establish a sharp parity phenomenon for power sequences.

\begin{theorem}[Parity dichotomy]\label{thm:intro-parity}
Let $m\ge1$ be an integer and consider $\theta_m(t)=\sum_{n\ge1}e^{-\tau_n^{\,m}t}$.
\begin{enumerate}
\item If $m$ is even, then $(\tau_n^{\,m})_{n\ge1}$ is admissible in the sense of \cite{QHS}.
\item If $m$ is odd, then the small--time expansion of $\theta_m(t)$ contains a \emph{nonzero}
logarithmic term $\kappa_m\,t\log t$. In particular $(\tau_n^{\,m})$ is \emph{not} admissible in the
strict QHS sense.
\end{enumerate}
\end{theorem}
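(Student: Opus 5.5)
We take as given the analytic normal form from the abstract: with $\mu_n:=n+\tfrac12$, the squared zeros satisfy $\lambda_n=\mu_n^2+g(\mu_n^{-2})$ for some $g$ analytic near $0$. Equivalently, $\tau_n=\mu_n\,h(\mu_n^{-2})$ with $h(0)=1$ analytic, so that
\[
\tau_n=\mu_n+\sum_{k\ge1}a_k\mu_n^{1-2k}.
\]
Raising to the power $m$ gives
\[
\tau_n^m=\mu_n^m\,h(\mu_n^{-2})^m=\mu_n^m+\sum_{k\ge1}b_k^{(m)}\mu_n^{m-2k},
\]
with a convergent expansion for $n$ large. We would handle the finitely many small $n$ separately; each contributes a power series in $t$.

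\textbf{Expansion of $\theta_m$.} We write
\[
e^{-\tau_n^m t}=e^{-\mu_n^m t}\exp\Bigl(-t\sum_k b_k^{(m)}\mu_n^{m-2k}\Bigr)
\]
and expand the second factor in powers of $t$. Because $m-2k\le m-2$, the term $t^j\mu_n^{j(m-2k)}$ is bounded by $(t\mu_n^{m})^j\mu_n^{-2j}$. Truncating after $N$ terms leaves a remainder of size $O\bigl(\sum_n (t\mu_n^m)^{N}\mu_n^{-2N}e^{-c\mu_n^m t}\bigr)$, which is $O(t^{N'})$ with $N'\to\infty$. This uniform Taylor control is the analytic input from the normal form. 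It reduces $\theta_m$ to finitely many weighted sums
\[
S_{\alpha}(t)=\sum_n \mu_n^{\alpha}e^{-\mu_n^m t},\qquad \alpha\in\Z,
\]
each multiplied by $t^j$.

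\textbf{Mellin analysis.} The Mellin transform of $S_\alpha$ is
\[
\Gamma(s)\,\zeta_H\bigl(ms-\alpha,\tfrac12\bigr)=\Gamma(s)\,(2^{ms-\alpha}-1)\,\zeta(ms-\alpha).
\]
Its poles are:
\begin{itemize}
\item[(a)] a simple pole at $s=(1+\alpha)/m$, giving the term $t^{-(1+\alpha)/m}$;
\item[(b)] poles at $s=-\ell$ from $\Gamma$, giving $t^\ell$ with coefficient $\zeta_H(-m\ell-\alpha,\tfrac12)(-1)^\ell/\ell!$.
\end{itemize}
A logarithm appears exactly when (a) and (b) collide, i.e.\ when $1+\alpha=-m\ell$ for some $\ell\ge0$. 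At such a point the residue of the $\Gamma$ pole does not cancel.

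\textbf{Even $m$.} All exponents $\alpha=j(m-2k)$ are even, so $1+\alpha$ is odd and cannot equal $-m\ell$. No collision occurs, and the result is a pure expansion in powers $t^{-(1+\alpha)/m}$ and $t^\ell$. For $m=2$ these are half-integer powers, which proves Theorem~\ref{thm:intro-adm}. The QHS growth condition follows from $\tau_n\sim n$.

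\textbf{Odd $m$.} Collisions are possible. The only one producing $t\log t$ arises at $\ell=1$, from $j=1$, $\alpha=m-2k=-1-m$, i.e.\ $k=m+\tfrac12$, which is impossible. Hence we must recompute: the total prefactor is $t^{j}$, and we need $t^{j}\cdot t^{\ell}$ to equal $t^1$ with $\ell=0$, $j=1$. That requires $\alpha=-1$, i.e.\ $m-2k=-1$, i.e.\ $k=(m+1)/2$. The coefficient is $-b^{(m)}_{(m+1)/2}$ times the double-pole contribution $(2^{ms+1}-1)\zeta(ms+1)\Gamma(s)$ at $s=0$. There $(2^{1}-1)=1$, so the double pole has leading coefficient $1/m\ne0$, and we get $\kappa_m=\pm b^{(m)}_{(m+1)/2}/m$. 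The other collisions $1+\alpha=-m\ell$ with $\ell\ge1$ produce $t^{j+\ell}\log t$ with $j+\ell\ge2$ (or $j=0$, $\alpha=0$, which gives no collision). So $t\log t$ is the first logarithm.

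\textbf{Main obstacle.} The hard step is proving $b^{(m)}_{(m+1)/2}\neq0$. This coefficient is the $\mu^{-1}$ coefficient of $\mu^m h(\mu^{-2})^m$, a polynomial in the $a_k$. For $m=1$ it is $a_1$, which must be identified explicitly from the BOCRS asymptotics. We expect $a_1$ is tied to the constant $C$ and is nonzero, since otherwise $\tau_n-\mu_n=O(\mu_n^{-3})$. For general odd $m$ we would try to relate $b^{(m)}_{(m+1)/2}$ to a residue of $L_+$, namely $\operatorname{Res}_{s=-1}$-type data. Alternatively, we would show it is a nonvanishing quantity computable via the inversion functional equation, since the cancellation of all these coefficients would force $h\equiv1$ to too high an order.
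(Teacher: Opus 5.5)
Your even-$m$ argument is essentially the paper's: the normal form, a uniform Taylor expansion in $t$ with a summable remainder, reduction to weighted Hurwitz theta sums, and the observation that $1+\alpha$ is odd, so the Hurwitz pole never meets a pole of $\Gamma$. For odd $m$ you also locate the $t\log t$ term correctly. It comes only from the $j=1$ sum with weight $\mu_n^{-1}$, whose Mellin transform has a double pole at $s=0$, and this gives $\kappa_m=d_m/m$ with $d_m:=b^{(m)}_{(m+1)/2}=[u^{(m+1)/2}]\,h(u)^m$.

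\textbf{The gap.} You never show $d_m\neq0$, which you yourself flag as the main obstacle, and that nonvanishing \emph{is} part (2). Without it you have only a log-polyhomogeneous expansion whose $t\log t$ coefficient might vanish, and that does not contradict admissibility. Your heuristics do not close it:
\begin{itemize}
\item Analyticity of the normal form cannot decide the question: $h(u)=1+a_2u^2$ is analytic and has $d_1=0$.
\item $\tau_n-\mu_n=O(\mu_n^{-3})$ is not in itself contradictory.
\item For $m\ge3$, $d_m$ is a polynomial in the $a_k$ with terms of both signs. For example $d_3=3a_1^2+3a_2$, while Theorem~\ref{thm:rho} only tells you $a_k\le0$.
\item The vanishing of a single coefficient of $h^m$ does not force $h\equiv1$ to any order.
\end{itemize}

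\textbf{The missing idea.} You need an arithmetic input from BOCRS that links $\rho$ to $L_-$. This is the relation $g\bigl(1/(2\pi iC\,\tau(z))\bigr)=1/z$, where $\tau(z)=z^{-1}-\rho(z)$ and $g(w)=\frac{1}{\pi i}\sum_{k\ge0}\frac{2L_-(2k-1)}{2k-1}w^{2k-1}$.
\begin{itemize}
\item Because $L_-(-1)=-1/(4C)$, the function $f=1/g$ is holomorphic at $0$ with $f'(0)=2\pi iC$. Hence $w(z)=1/(2\pi iC\,\tau(z))$ is the local inverse of $f$.
\item Lagrange--B\"urmann then gives $[z^1]\,w(z)^{-m}=-m\,[w^m]\,g$. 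Therefore $d_m=[z^1]\tau(z)^m=-\frac{2L_-(m)}{\pi i\,(2\pi iC)^m}$.
\item Finally $L_-(m)=\sum_n(-1)^n\tau_n^{-m}<0$. Indeed $\tau_{n+1}-\tau_n\ge1$, so this is an alternating series with strictly decreasing terms whose first term is negative.
\end{itemize}

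\textbf{A smaller point.} The $t^j$-term of your Taylor step is an \emph{infinite} series of sums $S_\alpha$ with $\alpha\to-\infty$, not a finite one, because the correction $\sum_k b^{(m)}_k\mu_n^{m-2k}$ does not terminate. To sum the individual expansions, and to be sure nothing else contributes to $t\log t$, you need uniform bounds on the shifted Mellin remainders. The paper gets these from a geometric bound $\sum_n a_n^{-2(m+\Ree s)}\ll r_0^{m}$ on the line $\Ree s=s_K$, combined with Cauchy estimates. In the odd case the paper avoids this entirely: it writes $\tau_n^m=b_n+d_m a_n^{-1}+O(a_n^{-3})$ with $b_n$ a finite Laurent polynomial, and compares $t\sum_n a_n^{-1}e^{-b_nt}$ with $H_m(t)=\frac1m\log\frac1t+O(1)$.
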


The appearance of logarithms for odd powers is not an artifact of estimation: it is forced by a
specific $a_n^{-1}$ term in the asymptotic expansion of $\tau_n^{\,m}$ (with $a_n=n+\tfrac12$),
and its coefficient can be expressed explicitly through $L_-(m)$, reconciling the heat trace
picture with the residue formula for $L_+$ at negative odd integers recorded in \cite[\S10.2]{BOCRS}.
Thus the failure of admissibility for odd powers has a precise analytic signature:
logarithmic small--time terms correspond to higher--order pole phenomena in Mellin space and to
$\log\lambda$ corrections in large--parameter expansions of resolvent traces and canonical products.
This naturally suggests a \emph{log--polyhomogeneous} enlargement of QHS admissibility, parallel to
the way logarithmic heat terms arise in geometric problems with singularities
\cite{Cheeger83,Gilkey95}.

Finally, our argument yields a general admissibility criterion for analytic perturbations of
quadratic lattices, which should be useful well beyond the H\"ormander--Bernhardsson setting.

\begin{theorem}[Analytic perturbations of quadratic lattices]\label{thm:intro-criterion}
Let $a>0$ and let $q$ be analytic in a neighborhood of $[0,a^{-2}]$.
Then the sequence
\[
\lambda_n=(n+a)^2+q\bigl((n+a)^{-2}\bigr)
\]
is admissible in the sense of \cite{QHS}, provided $\lambda_n>0$ for all $n$.
\end{theorem}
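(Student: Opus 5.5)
Write $a_n=n+a$ and expand $q(x)=\sum_{k\ge0}q_kx^k$, which converges for $|x|\le r$ with some $r>0$. Fix an $n_0$ with $a_{n_0}^{-2}<r/2$. The finitely many terms with $n<n_0$ contribute an entire function of $t$ to $\theta(t)=\sum_n e^{-\lambda_n t}$; since $\lambda_n>0$, this is a pure power series in $t$ and is harmless for the QHS expansion. For $n\ge n_0$ put $\varepsilon_n:=q(a_n^{-2})$, which is bounded and analytic in $a_n^{-2}$. Then
\[
e^{-\lambda_n t}=e^{-a_n^2t}\,e^{-\varepsilon_n t}=e^{-a_n^2 t}\sum_{j=0}^{J}\frac{(-t)^j}{j!}\varepsilon_n^{\,j}+O\bigl(t^{J+1}e^{-a_n^2t}\bigr),
\]
with the remainder uniform in $n$ because $\varepsilon_n$ is bounded. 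Summing this remainder over $n$ gives $O(t^{J+1}\sum_n e^{-a_n^2 t})=O(t^{J+1/2})$, which is as small as we like. Next I expand each $\varepsilon_n^{\,j}=\sum_{k\ge0}q^{(j)}_k a_n^{-2k}$, where $q^{(j)}_k$ are the coefficients of $q^j$. I truncate at $k\le K$; the tail is bounded by $C^j a_n^{-2K-2}$ uniformly in $n$. This reduces the problem to the model sums
\[
S_k(t):=\sum_{n\ge n_0}a_n^{-2k}e^{-a_n^2t},\qquad k\ge0,
\]
plus errors of the form $t^j\sum_n a_n^{-2K-2}e^{-a_n^2t}=O(t^j)$, whose exponent $j$ grows with the truncation and so can be pushed arbitrarily high.

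The main step is a small-$t$ expansion of $S_k$ via Mellin transform. Using $e^{-x}=\frac1{2\pi i}\int_{(c)}\Gamma(w)x^{-w}\,dw$, we get
\[
S_k(t)=\frac1{2\pi i}\int_{(c)}\Gamma(w)\,t^{-w}\,\zeta_H(2w+2k,\,n_0+a)\,dw
\]
for large $c$, where $\zeta_H$ is the Hurwitz zeta function. I then shift the contour to the left. Two families of poles appear. First, $\zeta_H$ has a single pole at $2w+2k=1$, i.e. $w=\tfrac12-k$, which contributes $\frac12\Gamma(\tfrac12-k)t^{k-1/2}$. Second, $\Gamma$ has poles at $w=-m$. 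At these points the value $\zeta_H(2k-2m,n_0+a)$ is finite, and there is no collision with the $\zeta_H$ pole because $\tfrac12-k\notin\Z$. Every pole is therefore simple, so the expansion involves only powers $t^{m}$ and $t^{k-1/2}$, with no logarithms. This is the key point: for even powers, the parity $2w+2k$ keeps the Hurwitz pole at a half-integer $w$, away from the integer poles of $\Gamma$. The contour shift is justified by Stirling decay of $\Gamma$ along vertical lines against the polynomial growth of $\zeta_H$.

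To assemble the pieces, I collect $\sum_{j,k}\frac{(-1)^j}{j!}q^{(j)}_k t^j S_k(t)$ up to a fixed order $t^{N}$. Only finitely many pairs $(j,k)$ contribute below $t^N$: the pole terms are $t^{j+k-1/2}$ and $t^{j+m}$, and the remainder estimates above bound everything else. For the error from the $k$-tail when $K>$ some threshold, I use the crude bound $\sum_n a_n^{-2K-2}=O(1)$ for $K\ge0$, which gives errors $O(t^{j})$. To guarantee order $t^N$ I therefore also truncate in $j$ at $J\ge N$. For small $j$, I take $K$ so large that $\sum_n a_n^{-2K-2}e^{-a_n^2t}$ has an expansion to order $t^N$; this is itself a case of $S_{K+1}$ handled by the Mellin argument, or is an entire-type series $\sum_m \frac{(-t)^m}{m!}\zeta_H(2K+2-2m)$ with controlled remainder. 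The result is $\theta(t)\sim\sum c_\nu t^{j_\nu}$ with $j_\nu\in\frac12\Z$, $j_\nu\ge-\tfrac12$, and no logarithms. The remaining QHS conditions also need checking: convergence of $\theta$ for $t>0$ is clear since $\lambda_n\sim n^2$, and the growth/positivity condition holds because $\lambda_n>0$ and $\lambda_n\to\infty$ monotonically eventually. The main obstacle is the bookkeeping of uniform remainders, so that the double truncation in $(j,k)$ gives a genuine asymptotic expansion to every order. I would handle it by fixing $N$ first and then choosing $J=N+1$ and $K=N+1$.
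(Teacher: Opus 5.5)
Your architecture matches the paper's: a uniform Taylor expansion in $t$, a power series of $q^j$ in $u=a_n^{-2}$, and Mellin--Hurwitz analysis of the weighted Gaussian sums, using that the Hurwitz pole at $w=\tfrac12-k$ never meets a pole of $\Gamma$. Splitting off $n<n_0$ is a real improvement. You use analyticity of $q$ only at $0$, whereas the paper's proof needs $q$ analytic on a disk $|u|<\rho$ with $\rho>a^{-2}$, which is stronger than analyticity near $[0,a^{-2}]$.

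\textbf{The gap is your treatment of the $k$-tail.} For $j\ge1$ what you discard is $\frac{(-t)^j}{j!}\mathcal T_{j,K}(t)$, where
\[
\mathcal T_{j,K}(t):=\sum_{n\ge n_0}T_{j,K}(a_n^{-2})\,e^{-a_n^2t},\qquad T_{j,K}(x):=\sum_{k>K}q^{(j)}_k x^k .
\]
This is not an error term. For $j=1$, dominated convergence gives $\mathcal T_{1,K}(t)\to\sum_{k>K}q_k\,\zeta_H(2k,n_0+a)$ as $t\to0^+$. That limit is in general a nonzero constant for every $K$, so the tail contributes a genuine $t^1$ term. Your bound $O(t^j)$ is correct, but $j$ is the Taylor index in $t$ and does not grow with $K$. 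So for $1\le j<N$ no choice of $K$ makes the tail $O(t^N)$, and truncating at $J\ge N$ only disposes of $j\ge N$. Your proposed patch does not close this. Knowing that the majorant $S_{K+1}(t)=\sum_{n\ge n_0}a_n^{-2K-2}e^{-a_n^2t}$ has an expansion says nothing about $\mathcal T_{j,K}$: being dominated by a function with an asymptotic expansion does not produce one.

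\textbf{The fix.} You need to show that the tail contributes pure integer powers. Put $w_n:=T_{j,K}(a_n^{-2})=O(a_n^{-2K-2})$ and use $\bigl|e^{-x}-\sum_{m=0}^{L}\frac{(-x)^m}{m!}\bigr|\le\frac{x^{L+1}}{(L+1)!}$ for $x\ge0$. For $L\le K-1$ (so that $\sum_n a_n^{2L-2K}<\infty$) this gives
\[
\mathcal T_{j,K}(t)=\sum_{m=0}^{L}\frac{(-t)^m}{m!}\sum_{n\ge n_0}w_n\,a_n^{2m}+O(t^{L+1}).
\]
With $K=N+1$ and $L=N$, the tails add only integer powers up to $t^N$, plus $O(t^{N+1})$. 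This is your ``entire-type series'' idea, but it must be applied to the actual weights $w_n$, not to the majorant.

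Alternatively, as the paper does, do not truncate in $k$ at all. For $k\ge K+2$, the Mellin remainder of $S_k$ on $\Re w=-K-\tfrac12$ is $O_K(a_{n_0}^{-2k}t^{K+1/2})$. The $\Gamma$-pole coefficients $\zeta_H(2k-2m,n_0+a)$, $m\le K$, are $O_K(a_{n_0}^{-2k})$. Against $|q^{(j)}_k|\le M^jr^{-k}$ (Cauchy, with $M=\sup_{|x|=r}|q|$) and $a_{n_0}^{-2}<r/2$, both are geometrically summable in $k$. So the expansions of all the $S_k$ can be summed termwise.

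With either repair, the rest of your argument goes through.
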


\subsection{Method: analytic normal forms and Mellin analysis of weighted Gaussian sums}

A distinctive feature of the Quine--Heydari--Song notion of admissibility is the \emph{strictly
polyhomogeneous} small-time expansion
\[
\sum_{n}e^{-X_n t}\sim \sum_{\nu\ge 0}c_\nu t^{j_\nu}\qquad(t\to 0^+),
\]
with \emph{no} logarithmic factors $t^\alpha(\log t)^q$ allowed.
This condition is not cosmetic: it is exactly what forces the associated spectral zeta function
$Z(s)=\sum_n X_n^{-s}$ to have at most \emph{simple} poles, and it is what makes the zeta-regularized product calculus behave like a clean Weierstrass theory (e.g.\ with canonical products controlled by finitely many heat coefficients).

The present problem is subtle for a structural reason: the
H\"ormander--Bernhardsson extremal zeros $\tau_n$ are \emph{half-integer perturbations} with a
\emph{borderline} first correction of size $1/n$.  More precisely, writing $a_n:=n+\tfrac12$,
the BOCRS parametrization gives
\[
\tau_n = a_n-\rho(a_n^{-1}),\qquad \rho(z)=\sum_{m\ge1}a_m z^m,\quad a_m\ge0,
\]
so that the leading deviation $a_n-\tau_n$ is of order $a_n^{-1}$ (indeed one has an expansion
$\rho(a_n^{-1})=\frac{a_1}{a_n}+O(a_n^{-3})$).
This is precisely the regime where a term-by-term linearization of the exponential produces
\emph{harmonic weights}:
\[
e^{-\tau_n t}=e^{-a_n t}\exp\!\bigl((a_n-\tau_n)t\bigr)
= e^{-a_n t}\Bigl(1+t\,\frac{a_1}{a_n}+O(t a_n^{-3})+O(t^2 a_n^{-2})\Bigr).
\]
Summing the first correction over $n$ yields the classical logarithmic amplification
\[
t\sum_{n\ge 1}\frac{e^{-a_n t}}{a_n}
\sim t\log\!\frac1t \qquad (t\to 0^+),
\]
so even though the pointwise perturbation $a_n-\tau_n$ is small, the \emph{collective} effect of
the borderline weight $a_n^{-1}$ creates a genuine $t\log t$ term in the heat trace.
In other words: the obstruction is not local in $n$ but global in the Laplace summation, and it is
invisible unless one tracks the exact first nonsummable correction.  This is exactly why
BOCRS--admissibility (an $\ell^2$ perturbation of $a_n$) does \emph{not} automatically imply
QHS--admissibility (a pure-power heat expansion).

The resolution is to identify a normal form in which the offending harmonic weight cannot occur.
For the squared zeros
\[
\lambda_n:=\tau_n^2,
\]
the oddness of $\rho$ implies the analytic reparametrization
\[
\tau_n=a_n-a_n^{-1}\sigma(a_n^{-2})
\quad\Longrightarrow\quad
\lambda_n=a_n^2+q(a_n^{-2}),
\]
with $q$ analytic on a neighborhood of $[0,r_0]$ (and in particular bounded there).
This is the key cancellation: \emph{squaring removes the $a_n^{-1}$ term} and replaces it by an
analytic correction in $u_n:=a_n^{-2}$.
Consequently, in the heat trace
\[
\theta(t)=\sum_{n\ge1}e^{-\lambda_n t}
=\sum_{n\ge1}e^{-a_n^2 t}\,e^{-t q(u_n)},
\]
we never encounter the harmonic weight $a_n^{-1}$ that produces $t\log t$.

The proof then has two robust components.

\smallskip
\noindent\textbf{(i) Taylor expansion in time with a summable remainder.}
We expand $e^{-tq(u)}$ in powers of $t$ \emph{uniformly} for $u\in[0,4/9]$,
without expanding $q$ in $u$.  This yields, for any truncation order $R$,
\[
\theta(t)=\sum_{r=0}^R \frac{(-t)^r}{r!}\,F_r(t) \;+\; O(t^{R+1})\Theta_0(t),
\qquad
F_r(t):=\sum_{n\ge1}q(u_n)^r e^{-a_n^2 t},
\]
so the error inherits the sharp $t^{-1/2}$ growth of the model Gaussian sum
$\Theta_0(t)=\sum_{n\ge1}e^{-a_n^2 t}$.

\smallskip
\noindent\textbf{(ii) Mellin transform and contour shifting for weighted Gaussian sums.}
For fixed $r$, the weight $u\mapsto q(u)^r$ is analytic and admits an absolutely convergent power series on $[0,4/9]$. This reduces $F_r(t)$ to a convergent linear combination of the weighted theta sums
\[
\Theta_m(t):=\sum_{n\ge1}a_n^{-2m}e^{-a_n^2 t}.
\]
We then study $\Theta_m(t)$ via Mellin inversion and contour shifting:
\[
\Theta_m(t)=\frac{1}{2\pi i}\int_{\Re s=c}\Gamma(s)\,t^{-s}
\Bigl(\zeta(2(m+s),\tfrac12)-2^{2(m+s)}\Bigr)\,ds,
\]
using the meromorphic structure of $\Gamma$ and the Hurwitz zeta function.
Since all poles are simple and do not collide in the even--power regimes, one obtains a
full asymptotic expansion in half--integer powers of $t$ with controlled remainder,
and the analyticity of $q$ guarantees that the ensuing infinite $m$--summations are
geometrically convergent and may be performed termwise.  This produces the full expansion
for $\theta(t)$ and hence QHS admissibility.

For odd powers $\tau_n^{2\ell+1}$ the same Mellin mechanism shows precisely how a logarithm is created:
a harmonic weight $a_n^{-1}$ appears in the large--$n$ expansion of $\tau_n^{2\ell+1}$,
and the corresponding weighted Gaussian sum has leading asymptotic $\frac1m\log(1/t)$, which forces
a nonzero $t\log t$ term in the heat trace.

The decisive point is that for $\lambda_n=\tau_n^2$ (and more generally for
even powers $\tau_n^{2p}$) the spectrum admits the analytic normal form
\[
\lambda_n=(n+a)^2+q\bigl((n+a)^{-2}\bigr),
\]
which rules out harmonic weights and allows a purely polyhomogeneous Mellin calculus.

The analytic engine behind admissibility proofs is the heat trace
$\Theta(t)$ and its small-$t$ asymptotics.
Standard references for heat kernel expansions and their spectral meaning are
\cite{Gilkey95,BerlineGetzlerVergne1992_HeatKernelsDirac,Vassilevich2003_HeatKernelUsersManual}.
When logarithmic terms can arise (typically in singular or boundary-value settings),
Cheeger's analysis provides a useful benchmark \cite{Cheeger83}.

To pass from heat asymptotics to zeta information, we rely on Mellin transform and
contour-shift techniques. For sums of theta type and harmonic-sum asymptotics, a
particularly effective toolkit is the Mellin-asymptotics framework of
Flajolet--Gourdon--Dumas \cite{FlajoletGourdonDumas1995_MellinHarmonicSums}, together
with Mellin--Barnes technology and classical asymptotic-analysis references
\cite{ParisKaminski2001_MellinBarnes,Olver_AsymptoticsSpecialFunctions,Wong2001_AsymptoticIntegrals}
and standard special-function identities (e.g.\ Hurwitz zeta) as organized in the
DLMF \cite{NISTDLMF}.
\subsection{Relation to other works and scope}

The extremal function $\varphi$ belongs to a long line of Fourier--analytic extremal problems for bandlimited functions, with origins in the Paley--Wiener theory \cite{PaleyWiener34} and the de Branges theory of Hilbert spaces of entire functions \cite{deBranges68}.
The H\"ormander--Bernhardsson minimization problem \cite{HB93} is a distinguished $PW^1$ analogue whose solution displays unusually rigid structure. T he recent analysis in \cite{BOCRS} uncovers
new algebraic symmetries and analytic continuations of the associated Dirichlet series. Earlier stages of this program appear in \cite{BOCRS2024_HBI}, which establish
quantitative control on the zero set $\{\pm\tau_n\}$ of $\varphi$, including an $\ell^{2}$-type proximity of $\tau_n$ to the half-integers.
Complementary viewpoints on the same extremizer theme in Paley--Wiener
and de Branges settings have emerged very recently in \cite{BrevigChirreOrtegaCerdaSeip2024_PointEvalPW,Instanes2024_PWPointEval,Bergman2024_deBrangesPointEval}.
In particular, these works develop an operator- and space-theoretic framework for
point evaluation in $PW^{p}$ and in model spaces, providing tools that are
useful when one needs stability statements for extremizers and their associated
spectral data.

The notion of admissibility in this paper is formulated in the language of zeta-regularized products. A systematic framework for regularized products of sequences is given by Quine--Heydari--Song \cite{QHS},
which isolates analytic conditions on sequences that guarantee that the associated zeta functions admit meromorphic continuation and that the corresponding
regularized products can be defined by $\exp(-\zeta'(0))$.
This sequence calculus aligns with the spectral dictionary developed in the theory
of elliptic operators: Seeley's complex powers \cite{Seeley67} and the Ray--Singer
analytic torsion program \cite{RaySinger71} explain how heat-trace asymptotics control
the meromorphic continuation of spectral zeta functions and determinants.

For context and comparison with broader zeta regularization theories (and
with anomaly phenomena), we also refer to the classical works of Voros and
Kontsevich--Vishik \cite{Voros87,KontsevichVishik1994_DeterminantsPsiDO}, as well as
to the monograph treatments \cite{JorgensonLang1993_RegularizedSeriesProducts,Paycha2012_RegularisedIntegralsSumsTraces,Kirsten2001_SpectralFunctions,ElizaldeEtAl1994_ZetaRegularization}.

\subsection{On notions of admissibility}
The adjective \emph{admissible} refers to genuinely different hypotheses in the two settings at hand.

In the BOCRS framework one considers a strictly increasing sequence
$t=(t_n)_{n\ge 1}\subset(0,\infty)$ and calls it \emph{admissible} if
\[
d_n := n+\tfrac12-t_n \in \ell^2.
\]
This is a perturbative Paley--Wiener type condition: it guarantees that the
canonical products built from $t$, e.g.
\[
\psi_t(z):=\prod_{n\ge 1}\Bigl(1-\frac{z^2}{t_n^2}\Bigr),
\qquad
\Psi_t(z):=\prod_{n\ge 1}\Bigl(1+(-1)^n\frac{z}{t_n}\Bigr),
\]
have essentially the same exponential type and growth behavior as the corresponding
half--integer model products, and it supports interpolation arguments
for $PW^1$.

In the QHS framework one starts from a sequence $(X_k)$ with $\Re X_k>0$ and assumes
that the heat trace
\[
H(t):=\sum_k e^{-X_k t}
\]
converges for $t>0$ and admits a \emph{full} asymptotic expansion as $t\to 0^+$ of the form
\[
H(t)\sim \sum_{\nu=0}^\infty c_{j_\nu}\,t^{j_\nu},
\qquad j_0<j_1<\cdots\to +\infty,
\]
together with a mild small--$t$ growth condition (e.g.\ $t^\beta H(t)\to 0$ for some
$\beta>0$).
Crucially, this hypothesis is \emph{purely polyhomogeneous} (no logarithmic terms),
and it is tailored to Mellin-transform arguments yielding meromorphic continuation of
the associated zeta function and zeta-regularized products.

BOCRS--admissibility is a \emph{metric} condition controlling how close the points $t_n$
are to $n+\tfrac12$ (an $\ell^2$ perturbation), whereas QHS--admissibility is an
\emph{asymptotic} condition controlling the small-time structure of $H(t)$.
Consequently, neither notion implies the other in general:
\begin{itemize}
\item A QHS--admissible sequence need not be BOCRS--admissible (e.g.\ $X_k=k^2$ has a
standard heat-trace expansion, but $k^2$ is not an $\ell^2$ perturbation of $k+\tfrac12$).
\item A BOCRS--admissible sequence need not be QHS--admissible.  In particular, for the
H\"ormander--Bernhardsson zero sequence $\tau_n$ one has an expansion of the form
\[
\tau_n = n+\tfrac12-\frac{a_1}{n+\tfrac12}+O\bigl(n^{-3}\bigr),
\]
so that $n+\tfrac12-\tau_n\asymp 1/n$.  Expanding
$e^{-\tau_n t}=e^{-(n+\tfrac12)t}\exp\bigl((n+\tfrac12-\tau_n)t\bigr)$ yields a correction
term proportional to
\[
t\sum_{n\ge 1}\frac{e^{-(n+\tfrac12)t}}{n+\tfrac12}\sim t\log(1/t),
\]
which produces a $t\log t$ contribution in $H(t)$ and therefore violates the strict
QHS requirement of a pure-power asymptotic expansion.
\end{itemize}

If one wishes to encompass sequences whose heat traces involve terms
$t^\alpha(\log t)^q$ (as happens naturally for odd-power reparametrizations such as
$X_k=\tau_k^{2m+1}$), a natural enlargement of QHS--admissibility is to allow
\emph{log-polyhomogeneous} expansions.  On the zeta-function side this corresponds to
allowing higher-order poles (simple poles correspond to pure powers), and one then
defines regularized products using zeta-renormalization in that broader
setting.

\subsection{Future directions}
\label{subsec:portability-contexts}

The analytic normal form
\[
\lambda_n=(n+a)^2+q\bigl((n+a)^{-2}\bigr)\qquad(q\ \text{analytic near }[0,a^{-2}])
\]
is not tied to the H\"ormander--Bernhardsson extremal problem \cite{BOCRS}, it is a common large-parameter structure in several areas.  Here are three concrete directions where the same mechanism is expected to apply and where the present admissibility criterion can be used essentially verbatim.

\begin{enumerate}[label=\textup{(\arabic*)},leftmargin=2.6em]
\item \textbf{One-dimensional spectral problems (Sturm--Liouville, Schr\"odinger).}
For regular boundary value problems on an interval, eigenvalues satisfy
\[
\lambda_n \sim (n+a)^2 + \sum_{j\ge1}c_j(n+a)^{-2j},
\]
with coefficients determined by the boundary conditions and the potential (Liouville--Green asymptotics); see, for instance,
\cite{Zettl2005_SturmLiouvilleTheory,LevitanSargsjan1991_SLDirac,PoschelTrubowitz1987_InverseSpectralTheory}.
When the asymptotic series is controlled analytically (e.g.\ for analytic potentials or
in integrable families), the correction can often be organized as an analytic function of $(n+a)^{-2}$,
placing the spectrum in the scope of the analytic perturbation of a quadratic lattice framework.
A direct payoff is a clean heat expansion and thus determinant-type information; compare \cite{GelfandYaglom1960_FunctionalSpacesDet,BFK1995_LineSegmentDet,KirstenMcKane2004_DeterminantsSL}.

\item \textbf{Special-function spectra (Bessel and Airy-type zeros and their zeta functions).}
Many canonical products in analysis and mathematical physics are built from zeros $x_{n}$ of special
functions, with large-$n$ expansions
\[
x_n \sim (n+a)\pi + \sum_{j\ge1}\frac{\alpha_j}{(n+a)^{2j-1}}.
\]
After squaring, one typically obtains
\[
x_n^2 \sim \pi^2 (n+a)^2 + \widetilde q\bigl((n+a)^{-2}\bigr),
\]
where $\widetilde q$ is analytic near $0$ under suitable uniformity assumptions.
This is precisely the analytic-quadratic-lattice paradigm and leads to QHS-style admissibility
(compare \cite{QHS,Seeley67}), meromorphic continuation of the associated zeta functions, and controlled canonical products; see representative Bessel-zeta treatments in
\cite{ActorBender1996_BesselZerosZeta,Spreafico2004_BesselZeta,NaberBruckCostello2022_BesselZeta,AldanaKirstenRowlett2025_VariationBarnesBessel}.

\item \textbf{de Branges sampling problems with quadraticized lattices.}
Zeros of Hermite--Biehler functions (or sine-type functions) frequently form near-lattices
$n+a+\varepsilon_n$ in Paley--Wiener settings; see \cite{deBranges68,Seip2004_InterpolationSampling}.
In problems where one naturally passes to squares (or even powers) of such zeros
(e.g.\ via even canonical products, spectral transforms, or functional equations), the resulting
sequence often becomes
\[
(n+a)^2 + q\bigl((n+a)^{-2}\bigr)
\]
with $q$ analytic when the perturbation is produced by an analytic reparametrization.
This provides a portable route to heat-trace asymptotics and zeta-regularized product formulae in
Paley--Wiener settings, parallel to (but distinct from) purely $\ell^2$ perturbation theory:
compare the density viewpoint
\cite{Landau1967_Density,OrtegaCerdaSeip2002_FourierFrames,MarzoNitzanOlsen2012_DoublingPhase}
with sharp perturbation criteria for exponential Riesz bases
\cite{Kadec1964_PaleyWienerConstant,Avdonin1974_RieszBases,Pavlov1979_BasicityMuckenhoupt}.
\end{enumerate}

\noindent
In all three contexts, the same principle governs whether logarithms appear: odd-power (or unsquared) perturbations may leave a residual $(n+a)^{-1}$ harmonic weight and hence
create $t\log t$, while even-power quadraticization tends to eliminate that term and restore a pure-power
heat calculus; see, e.g., the regular-singular heat expansions in\cite{BruningSeeley1985_RegularSingularAsymptotics,BruningSeeley1987_ResolventExpansion}.

\subsection*{Organization of the paper}

Section~\ref{sec:QHS} recalls the notion of admissibility in the sense of Quine--Heydari--Song and establishes the basic setup for the squared zero sequence $\lambda_n=\tau_n^2$.  In particular, we derive the analytic normal form $\lambda_n=a_n^2+q(a_n^{-2})$ and reduce the heat trace $\theta(t)=\sum_{n\ge1}e^{-\lambda_n t}$ to weighted Gaussian sums (Section~\ref{sec:reduction}).  The Mellin--Hurwitz zeta analysis of these weighted sums produces full small-time asymptotics (Sections~\ref{subsec:Theta-asymp} and \ref{subsec:Fr-asymp}), and the admissibility of $(\lambda_n)$ is concluded in Section~\ref{sec:final}.

Section~\ref{sec:consequences} records the resulting spectral-zeta consequences of admissibility, including meromorphic continuation and special-value identities for the spectral zeta function, as well as coefficient extraction statements and large-parameter asymptotics for the associated canonical product.  In Section~\ref{sec:conj1} we combine these admissibility consequences with the inversion identity for $\varphi$ to prove Conjecture~1 of \cite{BOCRS}.  Section~\ref{sec:parity-dichotomy} establishes the parity dichotomy for $\theta_m(t)=\sum_{n\ge1}e^{-\tau_n^{,m}t}$, showing that even powers yield admissible sequences while odd powers force a nonzero $t\log t$ obstruction.  Finally, Section~\ref{sec:parity-consequences} discusses the corresponding zeta-theoretic and canonical-product consequences in the log-polyhomogeneous (odd-power) regime.

\subsection*{Acknowledgements}

We thank Quoc-Hung Nguyen for bringing this problem and conjecture to our attention. We are grateful to Vu Quang Huynh for valuable guidance on zeta regularization and for related discussions during our undergraduate studies. We thank Kristian Seip for his encouragement and correspondence. We are grateful to the Morningside Center of Mathematics, Chinese Academy of Sciences, for its support and a stimulating research environment.

\section{On admissible sequences in the sense of Quine--Heydari--Song}\label{sec:QHS}

\subsection{Setting}

We recall the definition from \cite[\S2]{QHS}.

\begin{definition}[Admissible sequence]\label{def:admissible}
A sequence $(X_k)_{k\ge 0}\subset \C\setminus\{0\}$ is called \emph{admissible} if:
\begin{enumerate}[label=\textup{(\arabic*)},leftmargin=2.5em]
\item $X_k$ lies in the half-plane $\Ree X>0$ for every $k$;
\item the series $\sum_{k\ge 0}\e^{-X_k t}$ converges absolutely for every $t>0$ and has
a full asymptotic expansion as $t\to 0^+$ of the form
\[
\sum_{k\ge 0}\e^{-X_k t}\sim \sum_{\nu=0}^\infty c_\nu t^{j_\nu},
\qquad j_0<j_1<\cdots,\quad j_\nu\to+\infty;
\]
\item there exists $\beta>0$ such that
\[
\lim_{t\to 0^+} t^\beta \sum_{k\ge 0}\e^{-|X_k|t} = 0.
\]
\end{enumerate}
\end{definition}

\begin{remark}
Quine--Heydari--Song index their sequences from $k=0$, whereas we will naturally work with
$n\ge 1$. Adding or removing finitely many terms does not affect any of the three conditions
in Definition \ref{def:admissible}, so we freely switch between these conventions.
\end{remark}

Throughout this paper, \emph{admissible} always means admissible in the sense of Quine--Heydari--Song.

\subsection{On conditions (1) and (3)}

Let $a_n:=n+\tfrac12$. The following is a reformulation of results proved in
\cite[Theorem~6.1 and Theorem~1.2]{BOCRS}.

\begin{theorem}[Convergent power series representation of $\tau_n$]\label{thm:rho}
There exists an odd power series
\[
\rho(z)=\sum_{m\ge 1} a_m z^m,\qquad a_m\ge 0,
\]
such that:
\begin{enumerate}[label=\textup{(\arabic*)},leftmargin=2.5em]
\item the series $\rho$ converges if and only if $|z|\le 2$;
\item $\rho(2)=\tfrac12$;
\item for every $n\ge 1$,
\begin{equation}\label{eq:tau-rho}
\tau_n = a_n - \rho(a_n^{-1}) = n+\frac12 - \rho\!\Bigl(\frac{1}{n+\frac12}\Bigr).
\end{equation}
\end{enumerate}
\end{theorem}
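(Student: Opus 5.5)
The plan is not to reprove anything about $\varphi$ from scratch. Instead I will translate the parametrization of the zero set obtained in \cite{BOCRS} into the notation used here, and then check the three listed properties one by one. The first step is to locate in \cite[Theorem~1.2]{BOCRS} the description of the positive zeros as values of an analytic function at the half-integers. I expect it to be of the form $\tau_n=F(n+\tfrac12)$, where $F(w)=w-G(1/w)$ and $G$ is analytic in a disc about the origin. From this I define $\rho:=G$ and write its Taylor series $\rho(z)=\sum_{m\ge1}a_m z^m$. (The coefficients $a_m$ are not to be confused with the half-integers $a_n=n+\tfrac12$; this index clash is inherited from the statement.) With this definition, \eqref{eq:tau-rho} holds by construction for all $n\ge1$.

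Second, I verify the structural properties.

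\emph{Oddness.} I will derive this from the symmetry of the zero set $\{\pm\tau_n\}$ of the even function $\varphi$, combined with the inversion functional equation relating $\Phi(z)$ and $\Phi\bigl((2\pi i C z)^{-1}\bigr)$. The idea is that the map $z\mapsto -z$ on the parameter side corresponds to $w\mapsto -w$, and that $F(-w)=-F(w)$ forces $G(-z)=-G(z)$. If \cite{BOCRS} state the expansion directly in odd powers, this step is just a citation.

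\emph{Nonnegativity $a_m\ge0$.} I expect this to be part of \cite[Theorem~6.1]{BOCRS}, where the coefficients are produced by a recursion or a positive integral representation. I will cite that result rather than reprove it.

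\emph{Convergence for $|z|\le2$.} I will argue via Pringsheim's theorem. Since $a_m\ge0$, the radius of convergence $R$ is a singular point of $\rho$, and \cite{BOCRS} identify this singularity at $z=2$, so $R=2$. On the boundary circle, nonnegativity gives $\sum_m a_m|z|^m=\sum_m a_m 2^m=\rho(2)$, by monotone convergence, as soon as this sum is finite. Hence the series converges absolutely on all of $|z|\le2$ and diverges for $|z|>2$. This yields the ``if and only if'' in item (1).

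\emph{The value $\rho(2)=\tfrac12$.} I will obtain it by evaluating the parametrization at the formal index $n=0$, where $a_0=\tfrac12$ and $a_0^{-1}=2$. The relevant quantity is the zero of $\varphi$, or of the associated canonical product, at the endpoint of the sequence: in the normalization of \cite{BOCRS} the corresponding point is $0$. This gives $\tfrac12-\rho(2)=0$. Monotone convergence from the interior then justifies taking the limit $z\uparrow2$ on the real axis.

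The main obstacle is purely bookkeeping: matching the normalizations of \cite{BOCRS} (their variable, their constant $C$, and their indexing of zeros) with the present ones. The delicate point is the boundary behaviour at $z=2$, since finiteness of $\rho(2)$ must come from \cite{BOCRS} and not merely from analyticity inside the disc. Once this dictionary is in place, each item reduces to a direct citation together with Pringsheim's theorem and monotone convergence.
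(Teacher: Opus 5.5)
The paper gives no proof of this statement. It is quoted as a reformulation of \cite[Theorem~6.1 and Theorem~1.2]{BOCRS}. Your decision to cite is therefore right, and the Pringsheim/monotone-convergence bookkeeping for item (1) is harmless once analyticity in $|z|<2$, the singularity at $z=2$ and $\sum_m a_m2^m<\infty$ are imported.

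\textbf{Item (2).} The genuine gap is your derivation of $\rho(2)=\tfrac12$ by ``evaluating at $n=0$''. Identity \eqref{eq:tau-rho} is only available for $n\ge1$, and there is no zero $\tau_0=0$ to plug in. Since $\varphi(0)=\Phi(0)=1$, the point $0$ is not a zero of $\varphi$ or of $\Phi$, contrary to what you assert. The nodes $a_n^{-1}$, $n\ge1$, lie in $(0,\tfrac23]$; they determine $\rho$ by the identity theorem. Nothing about $(\tau_n)$ \emph{as a zero set} pins down the value of $\rho$ at $z=2$. For instance, take $t_n=a_n-\varepsilon a_n^{-1}$ with small $\varepsilon>0$. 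Then $\prod_n(1-z^2/t_n^2)$ is even, equals $1$ at $0$ and has zeros $\pm t_n$, yet the ``formal $t_0$'' is $\tfrac12-2\varepsilon\neq0$.

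In \cite{BOCRS}, $\rho(2)=\tfrac12$ is genuine analytic content. Write $\tau(z)=z^{-1}-\rho(z)$ as in the proof of Lemma~\ref{lem:a-1-coeff}. Then $\rho(2)=\tfrac12$ says that the inverse-function relation \eqref{eq:BOCRS-6.2} persists along $(0,2)$ with $\tau(z)\downarrow0$ as $z\uparrow2$. Equivalently, $g$ must tend to $\tfrac12$ as $w=(2\pi iC\tau)^{-1}\to\infty$ along the relevant ray. This is a statement about the global continuation of \eqref{eq:BOCRS-6.2} and the behaviour of $g$ at infinity, which is exactly the input \cite{BOCRS} supply. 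Monotone convergence only gives $\rho(2)=\lim_{z\uparrow2}\rho(z)$, not the value of that limit. So ``$\tau_0=0$'' is a mnemonic for the result, not a proof, and item (2) has to be cited.

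\textbf{Oddness.} Your oddness sketch has the same defect. The symmetry of $\{\pm\tau_n\}$ holds for every even canonical product. It says nothing about the values of $F(w)=w-G(1/w)$ at the points $-a_n$, so it cannot force $F(-w)=-F(w)$. For $t_n=a_n-\varepsilon a_n^{-2}$ the zero set of $\prod_n(1-z^2/t_n^2)$ is symmetric, but the corresponding $G(z)=\varepsilon z^2$ is even.

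In \cite{BOCRS} oddness comes from the oddness of the series $g$ in \eqref{eq:BOCRS-6.2}. Its reciprocal $1/g$ is odd with nonzero derivative at $0$, so its local inverse is odd, and hence so are $\tau(z)$ and $\rho$. The zeros enter only through the identity $g\bigl((2\pi iC\tau_n)^{-1}\bigr)=a_n$. That identity is derived from the functional equation for $\Phi$ together with a branch/counting argument, not from the evenness of $\varphi$. Since \cite{BOCRS} state the expansion directly in odd powers, your fallback of citing is what is needed here, exactly as for $a_m\ge0$.
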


\begin{remark}
Since the coefficients of $\rho$ are nonnegative and the series converges at $z=2$,
it converges absolutely on $[0,2]$ and defines a continuous increasing function there.
\end{remark}

Throughout we set $\lambda_n := \tau_n^2$ and
\[
\theta(t) := \sum_{n\ge 1}\e^{-\lambda_n t}.
\]

\begin{lemma}[Lower bound for $\tau_n$]\label{lem:tau-lower}
For every $n\ge 1$ one has $\tau_n\ge n$. Consequently $\lambda_n\ge n^2$.
\end{lemma}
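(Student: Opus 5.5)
We argue directly from the representation \eqref{eq:tau-rho} of Theorem~\ref{thm:rho}. Since $\tau_n = a_n - \rho(a_n^{-1})$ with $a_n = n+\tfrac12$, the claim $\tau_n\ge n$ is equivalent to
\[
\rho\bigl(a_n^{-1}\bigr)\le \tfrac12 .
\]
The plan is to get this from monotonicity of $\rho$ on $[0,2]$ together with the normalization $\rho(2)=\tfrac12$.

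The first step is to check that $a_n^{-1}$ lies in the interval where $\rho$ is defined and increasing. For $n\ge1$ we have $a_n\ge \tfrac32$, so $a_n^{-1}\in(0,\tfrac23]\subset[0,2]$. By the Remark following Theorem~\ref{thm:rho}, $\rho$ has nonnegative coefficients and converges at $z=2$. Hence it converges absolutely on $[0,2]$ and is nondecreasing there.

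The second step is the comparison itself. Monotonicity gives
\[
\rho\bigl(a_n^{-1}\bigr)\le \rho(2)=\tfrac12 ,
\]
which yields $\tau_n\ge a_n-\tfrac12=n$.

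The third step gives the second assertion. Since $n\ge1>0$, we have $\tau_n\ge n>0$, and squaring gives $\lambda_n=\tau_n^2\ge n^2$. The inequality could be sharpened to $\rho(2/3)$ in place of $\rho(2)$, but that is not needed.

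The only point needing care is the use of monotonicity up to the endpoint $z=2$. This is exactly what the Remark supplies: nonnegative coefficients and convergence at $2$ give absolute convergence on all of $[0,2]$, and each partial sum is increasing there. Beyond that there is no real obstacle; the lemma is an immediate corollary of Theorem~\ref{thm:rho}.
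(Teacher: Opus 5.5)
Your proposal is correct and follows essentially the same route as the paper's proof. Both arguments use $\tau_n=a_n-\rho(a_n^{-1})$, monotonicity of $\rho$ on $[0,2]$ (from nonnegative coefficients and convergence at $z=2$), and $\rho(2)=\tfrac12$ to get $\tau_n\ge n$, and then square to obtain $\lambda_n\ge n^2$.
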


\begin{proof}
Since $\rho$ has nonnegative coefficients and converges at $z=2$ (so $\sum_{j\ge1} \rho_j 2^j<\infty$),
the series defining $\rho$ converges absolutely and uniformly on $[0,2]$ by the Weierstrass $M$-test.
Hence $\rho$ is continuous on $[0,2]$, and because each partial sum is increasing on $[0,2]$,
the limit function $\rho$ is increasing on $[0,2]$.

By Theorem \ref{thm:rho}, $\rho$ has nonnegative coefficients and hence is increasing on $[0,2]$.
Since $a_n\ge \tfrac32$, we have $a_n^{-1}\le \tfrac23<2$, so
\[
0\le \rho(a_n^{-1})\le \rho(2)=\frac12.
\]
Using \eqref{eq:tau-rho} yields
\[
\tau_n = a_n - \rho(a_n^{-1}) \ge \Bigl(n+\frac12\Bigr) - \frac12 = n,
\]
and squaring gives $\lambda_n\ge n^2$.
\end{proof}

\begin{corollary}\label{cor:theta-basic}
The series $\theta(t)$ converges absolutely for every $t>0$ and satisfies
$\theta(t)=O(t^{-1/2})$ as $t\to 0^+$.
In particular, Definition \ref{def:admissible}\textup{(3)} holds for $\beta=1$.
\end{corollary}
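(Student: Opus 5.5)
The plan is to compare $\theta(t)$ with a Gaussian sum over the integers, using the lower bound from Lemma~\ref{lem:tau-lower}. By that lemma, $\lambda_n=\tau_n^2\ge n^2>0$ for every $n\ge1$. Since each term of $\theta(t)$ is positive, this gives termwise domination
\[
0<\e^{-\lambda_n t}\le \e^{-n^2 t},\qquad t>0,\ n\ge1.
\]
It therefore suffices to bound the model sum $G(t):=\sum_{n\ge1}\e^{-n^2t}$. This sum converges for every $t>0$; for example, $\e^{-n^2t}\le \e^{-nt}$ and the geometric series converges. Absolute convergence of $\theta(t)$ follows at once, because all terms are positive.

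For the small-time bound, I would use the standard integral comparison for the decreasing function $x\mapsto \e^{-x^2t}$ on $[0,\infty)$:
\[
G(t)=\sum_{n\ge1}\e^{-n^2t}\le \int_0^\infty \e^{-x^2t}\,dx=\frac{\sqrt{\pi}}{2}\,t^{-1/2}.
\]
This yields $\theta(t)\le \tfrac{\sqrt\pi}{2}t^{-1/2}$ for all $t>0$, and in particular $\theta(t)=O(t^{-1/2})$ as $t\to0^+$.

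For condition (3) of Definition~\ref{def:admissible}, note that $X_n=\lambda_n$ is real and positive, so $|X_n|=X_n$ and the sum in (3) is exactly $\theta(t)$. Taking $\beta=1$, we get $t\,\theta(t)\le \tfrac{\sqrt\pi}{2}t^{1/2}\to0$ as $t\to0^+$. Indeed, any $\beta>1/2$ would work.

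There is no genuine obstacle here. The only point needing care is that $\lambda_n>0$, so that the moduli in condition (3) coincide with the $\lambda_n$ themselves, and this is supplied by Lemma~\ref{lem:tau-lower}.
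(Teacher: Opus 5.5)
Your proposal is correct and follows essentially the same route as the paper: you dominate termwise by $\e^{-n^2t}$ using Lemma~\ref{lem:tau-lower}, bound that Gaussian sum by $\int_0^\infty \e^{-x^2t}\,dx=\tfrac{\sqrt\pi}{2}t^{-1/2}$, and conclude $t\,\theta(t)\to0$. Your explicit remark that $|X_n|=\lambda_n$ (so the sum in condition (3) is exactly $\theta(t)$) fills in a point the paper leaves implicit.
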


\begin{proof}
By Lemma \ref{lem:tau-lower}, $\theta(t)\le \sum_{n\ge 1}\e^{-n^2 t}$.
For $t>0$,
\[
\sum_{n\ge 1}\e^{-n^2 t}\le \int_0^\infty \e^{-x^2 t}\,dx=\frac12\sqrt{\frac{\pi}{t}},
\]
which implies $\theta(t)=O(t^{-1/2})$.
Thus $t\,\theta(t)=O(t^{1/2})\to 0$ as $t\to 0^+$, establishing Definition \ref{def:admissible}(3).
\end{proof}

\subsection{Reduction to weighted Gaussian sums}\label{sec:reduction}

It remains to prove Definition \ref{def:admissible}(2), i.e.\ the full small-time asymptotic expansion. We exploit the odd nature of $\rho$ to rewrite $\lambda_n$ as a quadratic main term plus an analytic correction.

Write $\rho$ as an odd series
\[
\rho(z)=\sum_{j\ge 0} b_j z^{2j+1},\qquad b_j:=a_{2j+1}\ge 0,
\]
and define
\[
\sigma(u):=\sum_{j\ge 0} b_j u^j.
\]
Then $\rho(z)=z\,\sigma(z^2)$, so for $x>0$,
\begin{equation}\label{eq:rho-sigma}
\rho(1/x)=x^{-1}\sigma(x^{-2}).
\end{equation}

Since $\rho$ converges for $|z|\le 2$, the series $\sigma$ converges for $|u|\le 4$.
Define the analytic function
\begin{equation}\label{eq:def-q}
q(u):=-2\sigma(u)+u\sigma(u)^2,\qquad |u|\le 4.
\end{equation}

\begin{lemma}[Analytic correction for $\lambda_n$]\label{lem:lambda-q}
Let $a_n=n+\tfrac12$ and $u_n=a_n^{-2}$. Then
\begin{equation}\label{eq:lambda-q}
\lambda_n=\tau_n^2=a_n^2 + q(u_n).
\end{equation}
Moreover, $(u_n)_{n\ge 1}\subset (0,4/9]$.
\end{lemma}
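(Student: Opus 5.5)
The identity \eqref{eq:lambda-q} is a direct algebraic computation from the representation \eqref{eq:tau-rho} in Theorem~\ref{thm:rho}, combined with the odd reparametrization \eqref{eq:rho-sigma}. We therefore first rewrite $\tau_n$ and then square it.

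Fix $n\ge1$ and put $x=a_n=n+\tfrac12\ge\tfrac32$. Since $a_n^{-1}\le\tfrac23<2$, the series $\rho(a_n^{-1})$ converges absolutely. Because $u_n=a_n^{-2}\le\tfrac49<4$, the series $\sigma(u_n)$ converges absolutely as well. Grouping the odd-index terms is then legitimate, so the identity $\rho(z)=z\,\sigma(z^2)$ holds pointwise at $z=a_n^{-1}$, which is exactly \eqref{eq:rho-sigma}. Substituting into \eqref{eq:tau-rho} gives
\[
\tau_n=a_n-a_n^{-1}\sigma(u_n).
\]

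Squaring this, we get
\[
\tau_n^2=a_n^2-2\sigma(u_n)+a_n^{-2}\sigma(u_n)^2=a_n^2-2\sigma(u_n)+u_n\sigma(u_n)^2 .
\]
By the definition \eqref{eq:def-q}, the last two terms are precisely $q(u_n)$. This proves \eqref{eq:lambda-q}.

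For the range claim, $a_n\ge\tfrac32$ gives $0<u_n=a_n^{-2}\le\tfrac49$, so $(u_n)_{n\ge1}\subset(0,4/9]$. This bound also shows that we never leave the disk $|u|\le4$ on which $\sigma$ and $q$ are defined.

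The only point needing any care is the regrouping of $\rho$ into $z\sigma(z^2)$. It is justified by absolute convergence, since the coefficients are nonnegative and the series converges at $z=2$, which covers $|z|\le2$; no genuine obstacle arises.
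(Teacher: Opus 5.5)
Your proposal is correct and follows essentially the same route as the paper: rewrite $\tau_n=a_n-a_n^{-1}\sigma(u_n)$ via $\rho(z)=z\,\sigma(z^2)$, square it, recognize $q(u_n)=-2\sigma(u_n)+u_n\sigma(u_n)^2$, and bound $u_n\le (3/2)^{-2}=4/9$. Your extra remark that the regrouping into $z\,\sigma(z^2)$ is justified by absolute convergence is a welcome detail that the paper leaves implicit.
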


\begin{proof}
By \eqref{eq:tau-rho} and \eqref{eq:rho-sigma},
\[
\tau_n=a_n-a_n^{-1}\sigma(a_n^{-2}) = a_n-a_n^{-1}\sigma(u_n).
\]
Squaring gives
\[
\tau_n^2 = a_n^2 -2\sigma(u_n)+u_n\sigma(u_n)^2 = a_n^2 + q(u_n),
\]
which is \eqref{eq:lambda-q}.
Finally, $u_n=a_n^{-2}\le (3/2)^{-2}=4/9$ for $n\ge 1$.
\end{proof}

Recall that by Lemma~\ref{lem:lambda-q} we have
\[
\lambda_n=a_n^2+q(u_n),\qquad a_n=n+\frac12,\qquad u_n=a_n^{-2}\in(0,r_0],
\quad r_0:=\frac49,
\]
and hence
\begin{equation}\label{eq:theta-split-correct}
\theta(t)=\sum_{n\ge 1}\e^{-a_n^2 t}\,\e^{-tq(u_n)}.
\end{equation}
In this part we reduce the asymptotic analysis of $\theta(t)$ as $t\to 0^+$ to the
basic weighted Gaussian sums
\[
\Theta_m(t):=\sum_{n\ge 1}a_n^{-2m}\e^{-a_n^2 t},\qquad m\ge 0.
\]
The crucial point is that we \emph{do not} truncate $q$ in the variable $u$, we only
perform a uniform Taylor expansion in the time variable $t$, which produces an
error of size $t^{R+1}\Theta_0(t)$ and hence ultimately $O(t^{R+1/2})$.

\subsubsection{Uniform Taylor expansion in $t$}\label{subsec:t-taylor}

Since $q$ is analytic in a neighborhood of $[0,r_0]$, it is bounded there. Set
\[
Q_*:=\sup_{u\in[0,r_0]}|q(u)|<\infty.
\]

\begin{lemma}[Uniform Taylor remainder for the exponential]\label{lem:exp-taylor}
Let $R\in\Z_{\ge0}$. Then for all $x\in\C$ and all $t\in[0,1]$,
\begin{equation}\label{eq:exp-taylor}
    e^{-tx}=\sum_{r=0}^{R}\frac{(-tx)^r}{r!}+O_R\!\bigl(t^{R+1}|x|^{R+1}e^{t|x|}\bigr).
\end{equation}
\end{lemma}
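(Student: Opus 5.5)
The plan is to use Taylor's theorem with integral remainder for the entire function $w\mapsto e^{-w}$, evaluated at $w=tx$. Since the exponential is entire, the expansion is exact up to an explicit remainder, and the only work is bounding that remainder uniformly in $x\in\C$ and $t\in[0,1]$. In fact the restriction $t\in[0,1]$ plays no role; the estimate will hold for all $t\ge0$, with an implied constant depending only on $R$.

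Concretely, for any $w\in\C$, integrating $\frac{d^{R+1}}{ds^{R+1}}e^{-sw}=(-w)^{R+1}e^{-sw}$ along the segment $s\in[0,1]$ gives the identity
\[
e^{-w}=\sum_{r=0}^{R}\frac{(-w)^r}{r!}+\frac{(-w)^{R+1}}{R!}\int_0^1(1-s)^R e^{-sw}\,ds .
\]
This can be checked by repeated integration by parts, or equivalently by applying Taylor's formula to the real-variable function $s\mapsto e^{-sw}$ on $[0,1]$ at $s=1$. We then bound the integrand using $|e^{-sw}|=e^{-s\Ree w}\le e^{s|w|}\le e^{|w|}$ for $s\in[0,1]$, and use $\int_0^1(1-s)^R\,ds=1/(R+1)$. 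Together these give
\[
\Bigl|e^{-w}-\sum_{r=0}^{R}\frac{(-w)^r}{r!}\Bigr|\le \frac{|w|^{R+1}}{(R+1)!}\,e^{|w|}.
\]

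Finally we substitute $w=tx$ with $t\ge0$. Then $|w|^{R+1}=t^{R+1}|x|^{R+1}$ and $e^{|w|}=e^{t|x|}$, which yields \eqref{eq:exp-taylor} with the explicit implied constant $1/(R+1)!$.

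There is no real obstacle. The one point to handle carefully is the complex argument: one should not rely on a real mean-value form of the remainder, which does not hold for complex $w$. The integral form of the remainder avoids this issue. In the later application, $x=q(u_n)$ satisfies $|x|\le Q_*$ and $t\le1$, so the factor $e^{t|x|}\le e^{Q_*}$ is harmless and the error becomes $O_R(t^{R+1})$ uniformly in $n$.
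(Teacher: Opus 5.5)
Your proof is correct and gives the same explicit constant $1/(R+1)!$ as the paper, but it takes a slightly different, equally elementary route. The paper never uses a remainder formula. It writes the error as the tail $\sum_{n\ge R+1}(-tx)^n/n!$ of the exponential series, takes absolute values termwise, and uses $(R+2)\cdots(R+1+k)\ge k!$ to dominate the tail by $\frac{(t|x|)^{R+1}}{(R+1)!}e^{t|x|}$. You instead use the integral form of the Taylor remainder for $s\mapsto e^{-sw}$ on $[0,1]$ and bound the integrand by $e^{|w|}$.

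The series argument needs only absolute convergence of the exponential series. It therefore sidesteps the question you correctly flag, namely which form of the remainder is legitimate for complex arguments.

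Your integral form has a small advantage: it keeps phase information. Since $\max_{s\in[0,1]}|e^{-sw}|=\max(1,e^{-\Ree w})$, it actually yields the sharper factor $\max(1,e^{-t\Ree x})$ in place of $e^{t|x|}$, so there is no exponential loss when $\Ree x\ge0$. The termwise absolute-value bound in the paper cannot see this.

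As you note, neither argument uses $t\le1$; only $t\ge0$ matters, despite the paper's proof saying it uses $t\in[0,1]$.
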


\begin{proof}
By the exponential series,
\[
e^{-tx}-\sum_{r=0}^{R}\frac{(-tx)^r}{r!}
=\sum_{n=R+1}^{\infty}\frac{(-tx)^n}{n!}.
\]
Taking absolute values and using $t\in[0,1]$ gives
\[
\left|e^{-tx}-\sum_{r=0}^{R}\frac{(-tx)^r}{r!}\right|
\le \sum_{n=R+1}^{\infty}\frac{(t|x|)^n}{n!}
=\frac{(t|x|)^{R+1}}{(R+1)!}\sum_{k=0}^{\infty}\frac{(t|x|)^k}{(R+2)\cdots(R+1+k)}.
\]
Since $(R+2)\cdots(R+1+k)\ge k!$, we obtain
\[
\sum_{n=R+1}^{\infty}\frac{(t|x|)^n}{n!}
\le \frac{(t|x|)^{R+1}}{(R+1)!}\sum_{k=0}^{\infty}\frac{(t|x|)^k}{k!}
=\frac{(t|x|)^{R+1}}{(R+1)!}e^{t|x|},
\]
which yields the claimed bound.
\end{proof}

Define for $r\ge 0$ and $t>0$ the auxiliary sums
\begin{equation}\label{eq:def-Fr}
F_r(t):=\sum_{n\ge 1}q(u_n)^r\,\e^{-a_n^2 t}.
\end{equation}
These series converge absolutely for every $t>0$ since $|q(u_n)|\le Q_*$ and
$\sum_{n\ge 1}\e^{-a_n^2 t}<\infty$.

\begin{proposition}[Finite reduction in $t$]\label{prop:reduction}
Fix an integer $R\ge 0$. Then for $t\in(0,1]$,
\begin{equation}\label{eq:theta-reduction-correct}
\theta(t)=\sum_{r=0}^{R}\frac{(-t)^r}{r!}\,F_r(t)\;+\;\mathrm{Err}_R(t),
\end{equation}
where the error satisfies
\begin{equation}\label{eq:ErrR}
\mathrm{Err}_R(t)=O(t^{R+1})\,\Theta_0(t).
\end{equation}
\end{proposition}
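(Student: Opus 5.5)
The plan is to apply Lemma~\ref{lem:exp-taylor} termwise in the representation \eqref{eq:theta-split-correct} and then sum the remainders. For each $n\ge1$ set $x=q(u_n)$. By Lemma~\ref{lem:lambda-q} we have $u_n\in(0,r_0]$, hence $|q(u_n)|\le Q_*$. For $t\in(0,1]$, Lemma~\ref{lem:exp-taylor} then gives
\[
\e^{-tq(u_n)}=\sum_{r=0}^{R}\frac{(-t)^r q(u_n)^r}{r!}+E_{n,R}(t),
\]
where
\[
|E_{n,R}(t)|\le C_R\,t^{R+1}|q(u_n)|^{R+1}\e^{t|q(u_n)|}\le C_R\,Q_*^{R+1}\e^{Q_*}\,t^{R+1}.
\]
The key point is that this constant is uniform in $n$, because $Q_*$ is a single bound valid on all of $[0,r_0]$.

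Next multiply by $\e^{-a_n^2t}$ and sum over $n\ge1$. The finite sum over $r$ may be interchanged with the sum over $n$, since each series $F_r(t)$ converges absolutely, as noted after \eqref{eq:def-Fr}. This yields
\[
\sum_{r=0}^R\frac{(-t)^r}{r!}F_r(t),
\]
and the remaining term
\[
\mathrm{Err}_R(t)=\sum_{n\ge1}\e^{-a_n^2t}E_{n,R}(t)
\]
converges absolutely. Its modulus is at most
\[
C_RQ_*^{R+1}\e^{Q_*}\,t^{R+1}\sum_{n\ge1}\e^{-a_n^2t}=O(t^{R+1})\,\Theta_0(t),
\]
which is exactly \eqref{eq:ErrR}.

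The only step that needs care is the uniformity of the remainder in $n$. It holds because $q$ is real-analytic, hence continuous and bounded, on the compact interval $[0,r_0]$, and because all $u_n$ lie in that interval. Since $r_0=4/9<4$ and $\sigma$ converges absolutely on $|u|\le4$, the function $q$ is in fact continuous on $[0,4]$, so $Q_*<\infty$. With $t\le1$ the factor $\e^{t|q|}$ is at most $\e^{Q_*}$, and no further difficulty arises.

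If one also wants the explicit rate, one can add that $\Theta_0(t)=O(t^{-1/2})$, by the comparison used in Corollary~\ref{cor:theta-basic}. This gives $\mathrm{Err}_R(t)=O(t^{R+1/2})$.
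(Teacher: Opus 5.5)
Your proposal is correct and follows the paper's proof: you apply Lemma~\ref{lem:exp-taylor} termwise with $x=q(u_n)$, bound the remainder uniformly in $n$ via $Q_*$, then multiply by $\e^{-a_n^2t}$ and sum to get $O(t^{R+1})\,\Theta_0(t)$. You are slightly more explicit than the paper about the constant $C_R Q_*^{R+1}\e^{Q_*}$ and about interchanging the finite $r$-sum with the $n$-sum, which is fine.
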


\begin{proof}
Insert \eqref{eq:exp-taylor} with $u=u_n$ into \eqref{eq:theta-split-correct}:
\[
\e^{-tq(u_n)}=\sum_{r=0}^{R}\frac{(-t)^r}{r!}\,q(u_n)^r+O(t^{R+1}),
\]
uniformly in $n$ for $t\in(0,1]$. Multiplying by $\e^{-a_n^2 t}$ and summing over $n\ge 1$
gives \eqref{eq:theta-reduction-correct} with
\[
\mathrm{Err}_R(t)
=\sum_{n\ge 1}\e^{-a_n^2 t}\,O(t^{R+1})
=O(t^{R+1})\sum_{n\ge 1}\e^{-a_n^2 t}
=O(t^{R+1})\Theta_0(t),
\]
which is \eqref{eq:ErrR}.
\end{proof}

\subsubsection{Analytic expansion in $u$ and reduction to $\Theta_m$}\label{subsec:u-expansion}

For each $r\ge 0$, the function $u\mapsto q(u)^r$ is analytic in $|u|<4$ and hence admits
a power series expansion around $u=0$:
\begin{equation}\label{eq:q^r-series}
q(u)^r=\sum_{m=0}^\infty c_{r,m}\,u^m,\qquad |u|<4,
\end{equation}
with real coefficients $c_{r,m}$ (since $q$ has real Taylor coefficients).

\begin{lemma}[Geometric bound for the coefficients $c_{r,m}$]\label{lem:crm-bound}
Fix $r\ge 0$ and choose any $\rho$ with $r_0<\rho<4$. Then there exists $C_{r,\rho}>0$
such that
\begin{equation}\label{eq:crm-bound}
|c_{r,m}|\le C_{r,\rho}\,\rho^{-m}\qquad(m\ge 0).
\end{equation}
In particular, the series $\sum_{m\ge 0}|c_{r,m}|\,r_0^m$ converges.
\end{lemma}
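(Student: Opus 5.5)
The bound in Lemma~\ref{lem:crm-bound} is a direct application of the Cauchy estimates. The plan is to show first that $q^r$ is holomorphic on a disk of radius strictly larger than $\rho$, and then to read off the coefficient bound.

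\emph{Step 1: analyticity and boundedness of $q$.} Since $\rho$ converges at $z=2$ and has nonnegative coefficients, $\sum_j b_j 4^j=\rho(2)/2\cdot 2=\rho(2)\cdot\tfrac{1}{2}\cdot 2<\infty$. More precisely, $\rho(2)=2\sigma(4)$, so $\sigma(4)=\tfrac14$. Hence the power series $\sigma(u)=\sum_j b_j u^j$ converges absolutely and uniformly on the closed disk $|u|\le 4$ by the $M$-test. It is holomorphic on the open disk $|u|<4$ and satisfies $|\sigma(u)|\le\sigma(4)=\tfrac14$ there. It follows that $q(u)=-2\sigma(u)+u\sigma(u)^2$ is holomorphic on $|u|<4$ with $|q(u)|\le \tfrac12+4\cdot\tfrac1{16}=\tfrac34$. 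Consequently $q^r$ is holomorphic on $|u|<4$ with $\sup_{|u|<4}|q(u)^r|\le (3/4)^r$. This also justifies \eqref{eq:q^r-series} on $|u|<4$. The coefficients $c_{r,m}$ are real because the $b_j$ are real.

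\emph{Step 2: Cauchy estimates.} Fix $\rho\in(r_0,4)$. Apply Cauchy's formula on the circle $|u|=\rho$, which lies inside the disk of holomorphy:
\[
c_{r,m}=\frac{1}{2\pi i}\oint_{|u|=\rho}\frac{q(u)^r}{u^{m+1}}\,du,
\qquad
|c_{r,m}|\le \rho^{-m}\max_{|u|=\rho}|q(u)|^r .
\]
So \eqref{eq:crm-bound} holds with $C_{r,\rho}:=\max_{|u|=\rho}|q(u)|^r$, or uniformly with $C_{r,\rho}=(3/4)^r$. This constant is finite by continuity on a compact circle, even without the global bound from Step 1.

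\emph{Step 3: summability.} Since $r_0<\rho$, we have
\[
\sum_{m}|c_{r,m}|r_0^m\le C_{r,\rho}\sum_m (r_0/\rho)^m=C_{r,\rho}\,\frac{1}{1-r_0/\rho}<\infty .
\]

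There is no real obstacle here. The only point requiring care is to use the radius of convergence $4$ of $\sigma$, which Theorem~\ref{thm:rho}(1) gives via $\rho(z)=z\sigma(z^2)$. The argument should not rely only on analyticity near $[0,r_0]$, since that alone would not give a disk of radius $\rho>r_0$ centred at $0$. Note also that the symbol $\rho$ in the lemma is a radius and is unrelated to the series $\rho(z)$.
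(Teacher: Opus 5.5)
Your proof is correct and follows the paper's argument: Cauchy's estimate on the circle $|u|=\rho$ gives $|c_{r,m}|\le \rho^{-m}\sup_{|u|=\rho}|q(u)|^r$, and summability follows from $r_0/\rho<1$; your Step~1 adds the explicit uniform constant $C_{r,\rho}=(3/4)^r$, obtained from $\sigma(4)=\rho(2)/2=\tfrac14$. The only blemish is the garbled chain at the start of Step~1, which should read $\sum_j b_j4^j=\sigma(4)=\rho(2)/2=\tfrac14$ (your expression equals $\rho(2)$ instead); this is harmless, since only finiteness is used there and you give the correct value in the next sentence.
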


\begin{proof}
By Cauchy's estimate applied to the analytic function $q(u)^r$ on the circle $|u|=\rho$,
\[
|c_{r,m}|=\left|\frac{1}{2\pi i}\int_{|u|=\rho}\frac{q(u)^r}{u^{m+1}}\,du\right|
\le \rho^{-m}\sup_{|u|=\rho}|q(u)|^r.
\]
This gives \eqref{eq:crm-bound} with $C_{r,\rho}:=\sup_{|u|=\rho}|q(u)|^r$.
Since $r_0/\rho<1$, the final claim follows.
\end{proof}

\begin{proposition}[Expansion of $F_r(t)$ in terms of $\Theta_m(t)$]\label{prop:Fr-Theta}
For every $r\ge 0$ and $t>0$,
\begin{equation}\label{eq:Fr-Theta}
F_r(t)=\sum_{m=0}^\infty c_{r,m}\,\Theta_m(t),
\end{equation}
and the series on the right converges absolutely.
\end{proposition}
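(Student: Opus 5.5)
The plan is to substitute the power series \eqref{eq:q^r-series} into the definition \eqref{eq:def-Fr} of $F_r(t)$ and interchange the two summations. The interchange is justified by absolute convergence of the double series, using the geometric coefficient bound of Lemma~\ref{lem:crm-bound}.

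First, for every $n\ge1$ we have $u_n=a_n^{-2}\in(0,r_0]$ by Lemma~\ref{lem:lambda-q}, and $r_0=4/9<4$. So \eqref{eq:q^r-series} applies at $u=u_n$ and gives
\[
q(u_n)^r=\sum_{m\ge0}c_{r,m}\,a_n^{-2m}.
\]
Multiplying by $e^{-a_n^2t}$ and summing over $n$ expresses $F_r(t)$ as the iterated sum $\sum_n\sum_m c_{r,m}a_n^{-2m}e^{-a_n^2t}$.

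Second, we bound the double series of absolute values. Fix $t>0$ and $\rho\in(r_0,4)$. Since $a_n^{-2m}\le r_0^m$, Lemma~\ref{lem:crm-bound} gives
\[
\sum_{n\ge1}\sum_{m\ge0}|c_{r,m}|\,a_n^{-2m}e^{-a_n^2t}
\le \Bigl(\sum_{m\ge0}C_{r,\rho}(r_0/\rho)^m\Bigr)\,\Theta_0(t)<\infty.
\]
Here $\Theta_0(t)<\infty$ because $\sum_n e^{-a_n^2t}$ converges for $t>0$, as noted after \eqref{eq:def-Fr}.

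By Fubini--Tonelli for series, we may then swap the order of summation. This gives $F_r(t)=\sum_m c_{r,m}\sum_n a_n^{-2m}e^{-a_n^2t}=\sum_m c_{r,m}\Theta_m(t)$, which is \eqref{eq:Fr-Theta}. The same bound, read with the $m$-sum outside, shows that $\sum_m|c_{r,m}|\,\Theta_m(t)\le \sum_m|c_{r,m}|r_0^m\,\Theta_0(t)<\infty$, which is the claimed absolute convergence.

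There is no real obstacle. The only point needing care is that the uniform estimate $a_n^{-2m}\le r_0^m$ must hold for all $n\ge1$, which is exactly the inequality $u_n\le r_0$ from Lemma~\ref{lem:lambda-q}. This estimate is what makes the double series dominated by a geometric series times $\Theta_0(t)$.
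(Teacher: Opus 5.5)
Your proposal is correct and matches the paper's proof essentially step for step. Both substitute the Taylor series of $q^r$ at $u_n\le r_0$, dominate the double series by $\Theta_0(t)\sum_m|c_{r,m}|r_0^m$ using $a_n^{-2m}\le r_0^m$ and Lemma~\ref{lem:crm-bound}, and then interchange the sums by Tonelli/Fubini. Your explicit bound $\Theta_m(t)\le r_0^m\Theta_0(t)$, giving absolute convergence of the right-hand side, is a small but welcome addition to what the paper writes out.
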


\begin{proof}
Since $u_n\in(0,r_0]$ and \eqref{eq:q^r-series} converges absolutely on $[0,r_0]$,
we have $q(u_n)^r=\sum_{m\ge 0}c_{r,m}u_n^m=\sum_{m\ge 0}c_{r,m}a_n^{-2m}$.
Hence,
\[
F_r(t)=\sum_{n\ge 1}\left(\sum_{m\ge 0}c_{r,m}a_n^{-2m}\right)\e^{-a_n^2 t}.
\]
To justify interchanging sums, note that for each fixed $t>0$,
\[
\sum_{n\ge 1}\sum_{m\ge 0}|c_{r,m}|\,a_n^{-2m}\e^{-a_n^2 t}
\le \sum_{m\ge 0}|c_{r,m}|\left(\sup_{n\ge 1}a_n^{-2m}\right)\sum_{n\ge 1}\e^{-a_n^2 t}
= \Theta_0(t)\sum_{m\ge 0}|c_{r,m}|\,r_0^m,
\]
and the right-hand side is finite by Lemma~\ref{lem:crm-bound}. Tonelli's theorem
therefore applies and yields \eqref{eq:Fr-Theta}.
\end{proof}

\medskip
Combining Propositions~\ref{prop:reduction} and \ref{prop:Fr-Theta}, we obtain for any
$R\ge 0$ and $t\in(0,1]$ the representation
\begin{equation}\label{eq:theta-double}
\theta(t)
=\sum_{r=0}^{R}\frac{(-t)^r}{r!}\sum_{m=0}^\infty c_{r,m}\,\Theta_m(t)
\;+\;O(t^{R+1})\Theta_0(t).
\end{equation}

\subsection{Full asymptotics for $\Theta_m(t)$}\label{subsec:Theta-asymp}
We will use the Hurwitz zeta function $\zeta(s,a)=\sum_{n\ge 0}(n+a)^{-s}$ for $\Ree s>1$.

\begin{lemma}[Half-integer identity]\label{lem:hurwitz-half}
For $\Ree p>1$,
\[
\sum_{n\ge 0}(n+\tfrac12)^{-p}=\zeta(p,\tfrac12)=(2^p-1)\zeta(p).
\]
Hence, as meromorphic functions of $p$,
\begin{equation}\label{eq:hurwitz-identity}
\zeta(p,\tfrac12)=(2^p-1)\zeta(p).
\end{equation}
\end{lemma}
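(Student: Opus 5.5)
The plan is to split the Dirichlet series for $\zeta(p)$ into even and odd indices, work for $\Ree p>1$ where everything converges absolutely, and then extend the identity by analytic continuation.

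First, rewrite the Hurwitz sum. For $\Ree p>1$ the series converges absolutely, since $|(n+\tfrac12)^{-p}|=(n+\tfrac12)^{-\Ree p}$. Factoring out $\tfrac12$ gives
\[
\sum_{n\ge0}(n+\tfrac12)^{-p}=\sum_{n\ge0}\Bigl(\frac{2n+1}{2}\Bigr)^{-p}=2^p\sum_{n\ge0}(2n+1)^{-p}.
\]

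Next, evaluate the odd-index sum. Absolute convergence allows rearranging $\zeta(p)=\sum_{k\ge1}k^{-p}$ into even and odd terms:
\[
\zeta(p)=\sum_{k\ge1}(2k)^{-p}+\sum_{n\ge0}(2n+1)^{-p}=2^{-p}\zeta(p)+\sum_{n\ge0}(2n+1)^{-p}.
\]
Hence $\sum_{n\ge0}(2n+1)^{-p}=(1-2^{-p})\zeta(p)$. Multiplying by $2^p$ yields $\zeta(p,\tfrac12)=(2^p-1)\zeta(p)$ on $\Ree p>1$.

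Finally, obtain \eqref{eq:hurwitz-identity} on all of $\C$. Both sides are meromorphic on $\C$: $\zeta(\cdot,\tfrac12)$ has its standard continuation with a single simple pole at $p=1$, and $(2^p-1)\zeta(p)$ is a product of an entire function and a meromorphic one. They agree on the open set $\Ree p>1$, so by the identity theorem they coincide as meromorphic functions on the connected domain $\C$ minus the pole.

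No step is a genuine obstacle. The only points needing care are quoting the meromorphic continuation of $\zeta(s,a)$, for instance from the DLMF, and noting that rearrangement is valid because of absolute convergence.
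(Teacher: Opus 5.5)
Your proposal is correct and follows essentially the same route as the paper. Both factor out $2^p$, remove the even terms $\sum_{n\ge1}(2n)^{-p}=2^{-p}\zeta(p)$ from $\zeta(p)$ for $\Ree p>1$, and extend the identity to $\C$ by analytic continuation.
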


\begin{proof}
For $\Ree p>1$,
\[
\sum_{n\ge 0}(n+\tfrac12)^{-p}
=2^p\sum_{n\ge 0}(2n+1)^{-p}
=2^p\Bigl(\sum_{m\ge 1}m^{-p}-\sum_{n\ge 1}(2n)^{-p}\Bigr)
=(2^p-1)\zeta(p).
\]
Both sides extend meromorphically to $\C$, hence \eqref{eq:hurwitz-identity} follows by analytic continuation.
\end{proof}

Recall
\[
\Theta_m(t)=\sum_{n\ge 1} a_n^{-2m}\e^{-a_n^2 t}
=\sum_{n\ge 1} (n+\tfrac12)^{-2m}\e^{-(n+\tfrac12)^2 t}.
\]

\begin{proposition}[Full asymptotic expansion of $\Theta_m(t)$]\label{prop:Theta-asymp}
For each integer $m\ge 0$, the function $\Theta_m(t)$ admits a full asymptotic expansion
as $t\to 0^+$ with exponents in $\tfrac12\Z$.

More precisely, fix an integer $K\ge 0$ and set $s_K:=-K-\tfrac12$. Then there exist
constants $A_{m,j}$ ($0\le j\le K$) such that, as $t\to 0^+$,
\begin{equation}\label{eq:Theta-K}
\Theta_m(t)
=\frac{\sqrt{\pi}}{2}\,\mathbf{1}_{\{m=0\}}\,t^{-1/2}
\;+\;\sum_{j=0}^{K}A_{m,j}\,t^{j}
\;+\;\mathbf{1}_{\{1\le m\le K\}}\frac12\,\Gamma\!\Bigl(\frac12-m\Bigr)\,t^{m-1/2}
\;+\;O\!\bigl(t^{K+1/2}\bigr).
\end{equation}
In particular,
\[
\Theta_0(t)=O(t^{-1/2}),\qquad
\Theta_m(t)=\Theta_m(0)+O(t^{1/2})\ (m=1),\qquad
\Theta_m(t)=\Theta_m(0)+O(t)\ (m\ge 2),
\]
and hence $\Theta_m(t)=O(1)$ for every $m\ge 1$.
\end{proposition}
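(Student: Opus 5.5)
The plan is to obtain \eqref{eq:Theta-K} from Mellin inversion followed by a contour shift, as sketched in the introduction. First compute the Mellin transform of $\Theta_m$. For $\Ree s>\max(0,\tfrac12-m)$, termwise integration is allowed by absolute convergence, and it gives
\[
\int_0^\infty \Theta_m(t)\,t^{s-1}\,dt=\Gamma(s)\sum_{n\ge1}a_n^{-2m-2s}=\Gamma(s)\,D_m(s),
\qquad D_m(s):=\zeta\bigl(2(m+s),\tfrac12\bigr)-2^{2(m+s)}.
\]
Here the subtracted term removes the $n=0$ summand $(1/2)^{-2(m+s)}$. By Lemma~\ref{lem:hurwitz-half}, $D_m(s)=(2^{2(m+s)}-1)\zeta(2(m+s))-2^{2(m+s)}$ is meromorphic on $\C$. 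Its only pole is a simple one at $2(m+s)=1$, i.e.\ at $s=\tfrac12-m$, with residue $\tfrac12$ in the variable $s$, since $\zeta(p,\tfrac12)$ has residue $1$ at $p=1$. Mellin inversion then gives
\[
\Theta_m(t)=\frac{1}{2\pi i}\int_{\Ree s=c}\Gamma(s)D_m(s)t^{-s}\,ds
\]
for any fixed $c>\max(0,\tfrac12-m)$, say $c=1$.

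Next, shift the line of integration to $\Ree s=s_K=-K-\tfrac12$ and collect residues. There are two families of poles.
\begin{itemize}
\item The poles of $\Gamma$ at $s=-j$, $0\le j\le K$. These contribute $\frac{(-1)^j}{j!}D_m(-j)\,t^{j}$, so $A_{m,j}=\frac{(-1)^j}{j!}D_m(-j)$.
\item The pole of $D_m$ at $s=\tfrac12-m$. It contributes $\tfrac12\Gamma(\tfrac12-m)t^{m-1/2}$, and it lies strictly between the two lines exactly when $\tfrac12-m>-K-\tfrac12$, i.e.\ $m\le K$. For $m=0$ it is the term $\tfrac12\Gamma(\tfrac12)t^{-1/2}=\tfrac{\sqrt\pi}{2}t^{-1/2}$.
\end{itemize}
The two families never collide, because one lies in $\Z$ and the other in $\tfrac12+\Z$, so all poles are simple and no logarithms arise. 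The line $\Ree s=s_K$ itself avoids all poles. On it $|t^{-s}|=t^{K+1/2}$, so the remaining integral is $O(t^{K+1/2})$, provided it converges absolutely. This yields \eqref{eq:Theta-K}.

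The main obstacle is justifying the contour shift. One needs $\Gamma(s)D_m(s)$ to be integrable on $\Ree s=s_K$, and the horizontal segments at height $\pm T$ to vanish as $T\to\infty$, uniformly for $\Ree s\in[s_K,1]$. I would use two facts. Stirling's formula gives $|\Gamma(\sigma+iT)|\ll e^{-\pi|T|/2}|T|^{\sigma-1/2}$ uniformly for $\sigma$ in compact sets. The Riemann zeta function has polynomial growth $|\zeta(\sigma+iT)|\ll |T|^{A}$ in vertical strips away from $p=1$; this follows from the functional equation together with Phragmén--Lindelöf, or from the standard convexity bound. The factors $2^{2(m+s)}$ and $2^{2(m+s)}-1$ are bounded on the strip. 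Exponential decay therefore beats polynomial growth, so the horizontal contributions tend to zero and the vertical integral converges absolutely.

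Finally, the "in particular" statements follow by reading off \eqref{eq:Theta-K}. For $m=0$ the leading term is $t^{-1/2}$, so $\Theta_0(t)=O(t^{-1/2})$. For $m=1$ the first correction after the constant $A_{1,0}=D_1(0)=\Theta_1(0)$ is $t^{1/2}$. For $m\ge2$ it is $O(t)$: take $K=1$, so the remainder is $O(t^{3/2})$ and the $t^{m-1/2}$ term either is absent or has exponent at least $3/2$. Here $\Theta_m(0)=\sum_{n\ge 1}a_n^{-2m}$ converges for $m\ge1$ and equals $D_m(0)$, which confirms that the constant term is consistent.
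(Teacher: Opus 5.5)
Your route is the paper's: Mellin inversion against $\Gamma(s)\bigl(\zeta(2(m+s),\tfrac12)-2^{2(m+s)}\bigr)$, then a shift to $\Ree s=s_K$. The residues you collect are $\frac{(-1)^j}{j!}D_m(-j)\,t^j$ at $s=-j$ and $\frac12\Gamma(\frac12-m)\,t^{m-1/2}$ at $s=\frac12-m$, and the remainder is bounded by Stirling plus polynomial growth of $\zeta$. Your residues, your range $m\le K$, and your handling of the horizontal segments are all correct.

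One assertion is false: ``the line $\Ree s=s_K$ itself avoids all poles.'' The $\Gamma$-poles are integers, but the pole of $D_m$ at $s=\frac12-m$ is a half-integer. It lies exactly on $\Ree s=s_K=-K-\frac12$ when $m=K+1$. In that case $\Gamma(s)D_m(s)t^{-s}$ has a simple pole on the contour, the vertical integral does not converge absolutely, and the bound $O(t^{K+1/2})$ is not justified as written. Your own proof of the ``in particular'' clause hits this case: for $m=2$ you take $K=1$, which is $m=K+1$.

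The fix is routine. For $m=K+1$, shift instead to $\Ree s=s_K-\eta$ with $0<\eta<\frac12$.
\begin{itemize}
\item No further pole of $\Gamma$ is crossed, since $-K-1<s_K-\eta$.
\item You pick up the extra residue $\frac12\Gamma(-K-\frac12)\,t^{K+1/2}$, which is absorbed into $O(t^{K+1/2})$.
\item The new remainder is $O(t^{K+1/2+\eta})$.
\end{itemize}
This is consistent with the indicator $\mathbf{1}_{\{1\le m\le K\}}$ in the statement, which omits the half-integer term exactly when it is no larger than the error. It is also the bookkeeping the paper records in its remark on poles lying on the shifted contour. With this adjustment your proof is complete.
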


\begin{proof}
Fix $m\ge 0$. For $y>0$ and $c>0$, Mellin inversion gives
\[
\e^{-y}=\frac{1}{2\pi i}\int_{\Ree s=c}\Gamma(s)\,y^{-s}\,ds.
\]
Apply this with $y=a_n^2 t$ and choose $c>\max(0,\tfrac12-m)$ so that
$\sum_{n\ge 1}a_n^{-2(m+c)}$ converges absolutely. Fubini's theorem yields
\begin{equation}\label{eq:Theta-mellin-correct}
\Theta_m(t)=\frac{1}{2\pi i}\int_{\Ree s=c}\Gamma(s)\,t^{-s}\,S_m(s)\,ds,
\end{equation}
where
\[
S_m(s):=\sum_{n\ge 1}a_n^{-2(m+s)}=\sum_{n\ge 1}(n+\tfrac12)^{-2(m+s)}.
\]

Using Lemma~\ref{lem:hurwitz-half} with $p=2(m+s)$ and subtracting the $n=0$ term
$(\tfrac12)^{-2(m+s)}=2^{2(m+s)}$, we obtain the meromorphic identity
\begin{equation}\label{eq:Sm-correct}
S_m(s)=(2^{2(m+s)}-1)\zeta(2(m+s)) - 2^{2(m+s)}.
\end{equation}
The only pole of $\zeta$ is at $1$, hence $S_m(s)$ has at most a simple pole at
\[
2(m+s)=1\iff s=\frac12-m,
\]
and $\Gamma(s)$ has simple poles at $s=0,-1,-2,\dots$.

Fix $K\ge 0$ and shift the contour in \eqref{eq:Theta-mellin-correct} from $\Ree s=c$ to $\Ree s=s_K=-K-\tfrac12$. This crosses the poles at $s=0,-1,\dots,-K$ and possibly
$s=\frac12-m$ (which lies to the right of $s_K$ exactly when $m\le K$). All poles are simple.
By the residue theorem,
\[
\Theta_m(t)
=\sum_{\substack{s_0\in\{\frac12-m,\,0,-1,\dots,-K\}\\ s_K<\Ree s_0<c}}
\Res_{s=s_0}\bigl(\Gamma(s)t^{-s}S_m(s)\bigr)
\;+\;\frac{1}{2\pi i}\int_{\Ree s=s_K}\Gamma(s)t^{-s}S_m(s)\,ds.
\]

We compute the residues.
For $j=0,1,\dots,K$, $\Res_{s=-j}\Gamma(s)=(-1)^j/j!$, hence the residues at $s=-j$ contribute
terms $A_{m,j}t^{j}$ for suitable constants $A_{m,j}$.
At $s=\frac12-m$ (when $m\le K$), we use that $\zeta(p)$ has residue $1$ at $p=1$ and
$p=2(m+s)$ satisfies $dp/ds=2$, while $(2^p-1)$ equals $1$ at $p=1$. Therefore
$\Res_{s=\frac12-m}S_m(s)=\frac12$. Since $\Gamma$ is holomorphic at $s=\frac12-m$,
this yields the half-integer term
\[
\frac12\,\Gamma\!\Bigl(\frac12-m\Bigr)\,t^{m-1/2}.
\]
For $m=0$, this is the leading term $\frac12\Gamma(\tfrac12)t^{-1/2}=\frac{\sqrt\pi}{2}t^{-1/2}$.

Finally, we bound the shifted integral. On $\Ree s=s_K$ we have $|t^{-s}|=t^{K+1/2}$.
Moreover, by Stirling's estimate,
$|\Gamma(s_K+iy)|\ll_K (1+|y|)^{-K-1}\e^{-\pi|y|/2}$.
The function $S_m(s)$ has at most polynomial growth in $|y|$ on vertical lines by
\eqref{eq:Sm-correct} and standard bounds for $\zeta(\sigma+iy)$ in vertical strips.
Hence the integrand is absolutely integrable in $y$ and we obtain
\[
\frac{1}{2\pi i}\int_{\Ree s=s_K}\Gamma(s)t^{-s}S_m(s)\,ds=O(t^{K+1/2})
\qquad(t\to 0^+),
\]
with an implied constant depending on $K$ and $m$.
This proves \eqref{eq:Theta-K} and hence the existence of a full asymptotic expansion.
The stated in particular bounds follow by inspecting the first few exponents:
for $m\ge 1$ the $t^0$ term is present and the next exponent is $1/2$ if $m=1$
and $1$ if $m\ge 2$.
\end{proof}

\begin{remark}[Contour shifts and poles on the contour]\label{rem:contour-shift}
In Mellin inversion arguments we shift a vertical line $\Re s=c$ to the left and pick up
residues. If one of the poles of the integrand lies \emph{exactly} on the intended new
contour (e.g.\ a zeta pole at $s=s_0$ coinciding with a target line $\Re s=\sigma$),
we proceed in the standard way: either
\begin{enumerate}[label=\textup{(\alph*)},leftmargin=2.2em]
\item shift instead to $\Re s=\sigma-\eta$ for some fixed $\eta>0$, or
\item indent the contour around $s_0$ and take the principal value plus the residue.
\end{enumerate}
Since shifting slightly further left only improves decay of $t^{-s}$ as $t\to0^+$, the resulting
error term is unaffected (and in fact improves). We will not repeat this bookkeeping in each
contour shift.
\end{remark}

\subsection{Full asymptotics for $F_r(t)$}\label{subsec:Fr-asymp}

We now transfer the asymptotics of $\Theta_m(t)$ to the weighted sums $F_r(t)$.

\begin{proposition}[Full asymptotic expansion of $F_r(t)$]\label{prop:Fr-asymp}
For each integer $r\ge 0$, the function $F_r(t)$ defined in \eqref{eq:def-Fr}
admits a full asymptotic expansion as $t\to 0^+$ with exponents in $\tfrac12\Z$.

More precisely, for every integer $K\ge 0$ there exist constants $B_{r,j}$,
$j\in\{-\tfrac12,0,\tfrac12,1,\dots,K\}$, such that
\begin{equation}\label{eq:Fr-K}
F_r(t)=\sum_{j\in\{-\tfrac12,0,\tfrac12,1,\dots,K\}}B_{r,j}\,t^{j}+O(t^{K+1/2})
\qquad(t\to 0^+).
\end{equation}
\end{proposition}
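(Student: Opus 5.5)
The plan is to combine Proposition~\ref{prop:Fr-Theta} with Proposition~\ref{prop:Theta-asymp}. The expansion \eqref{eq:Theta-K} has implied constants that depend on $m$, so I will not use it for every $m$. Instead I split the series \eqref{eq:Fr-Theta} at a cutoff depending only on $K$.

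Fix $K\ge0$ and set $M:=K+2$, so that $2m-2K-2\ge2$ for all $m\ge M$. Write
\[
F_r(t)=\sum_{m<M}c_{r,m}\Theta_m(t)+\sum_{m\ge M}c_{r,m}\Theta_m(t),
\]
which is legitimate by the absolute convergence in Proposition~\ref{prop:Fr-Theta}.

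\textbf{Head.} The first sum is a finite linear combination. Applying \eqref{eq:Theta-K} to each of its terms gives an expansion in powers $t^j$ with $j\in\{-\tfrac12,0,\tfrac12,1,\dots,K\}$ and error $O(t^{K+1/2})$. The half-integer terms $t^{m-1/2}$ come only from $m\le K$, hence lie in the admissible index set, and there are finitely many of them.

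\textbf{Tail.} For $m\ge M$ I avoid the Mellin route and Taylor expand the Gaussian directly, using Lemma~\ref{lem:exp-taylor} in the form
\[
e^{-y}=\sum_{j=0}^{K}\frac{(-y)^j}{j!}+O_K(y^{K+1}),\qquad y\ge0.
\]
The real-variable Lagrange remainder gives this without the factor $e^{y}$. Taking $y=a_n^2t$ yields
\[
\Theta_m(t)=\sum_{j=0}^{K}\frac{(-t)^j}{j!}\,S_m(-j)+O_K\!\Bigl(t^{K+1}\sum_{n\ge1}a_n^{2K+2-2m}\Bigr),
\]
where $S_m(-j)=\sum_{n\ge1}a_n^{2j-2m}$ converges absolutely because $2m-2j\ge4$. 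The key uniform estimate is that for real $s\ge2$,
\[
\sum_{n\ge1}a_n^{-s}\le C\,(3/2)^{-s}
\]
with $C$ absolute. This holds because $a_n/a_1\ge(2n+1)/3$, so the sum is at most $(3/2)^{-s}\sum_n\bigl(3/(2n+1)\bigr)^2$. Hence both $|S_m(-j)|$ and the remainder sum are bounded by $C_K\,r_0^{\,m}$ with $r_0=4/9$, uniformly in $m\ge M$.

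\textbf{Resummation.} Combining this with Lemma~\ref{lem:crm-bound}, choosing $\rho\in(r_0,4)$, gives
\[
\sum_{m\ge M}|c_{r,m}|\,r_0^{m}<\infty.
\]
Therefore the tail equals
\[
\sum_{j=0}^{K}\frac{(-t)^j}{j!}\sum_{m\ge M}c_{r,m}S_m(-j)+O(t^{K+1}),
\]
where the interchange of sums is justified by Fubini and the coefficient series converge absolutely. Adding the head and the tail, and collecting the coefficient of each $t^j$ into $B_{r,j}$, gives \eqref{eq:Fr-K}.

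\textbf{Main obstacle.} The delicate point is uniformity in $m$. The error bound in Proposition~\ref{prop:Theta-asymp} comes from Stirling and zeta bounds whose constants grow with $m$ in an uncontrolled way. The split at $M=K+2$, together with the elementary bound $\sum_n a_n^{-s}\ll(3/2)^{-s}$, is designed precisely to bypass this. I would double-check that the head contributes only finitely many half-integer exponents, and that no half-integer exponent beyond $t^{K-1/2}$ arises at order $t^{K+1/2}$, where it would be absorbed into the error anyway.
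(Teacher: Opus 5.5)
Your proof is correct. It differs from the paper's in how the tail $m\ge M$ is handled.

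The paper uses the same cutoff $M=K+2$, but it stays inside the Mellin framework for the tail:
\begin{itemize}
\item It writes the remainder $R_{m,K}(t)$ as the line integral of $\Gamma(s)t^{-s}S_m(s)$ over $\Re s=s_K$.
\item It observes that for $m\ge K+2$ the Dirichlet series $S_m(s)$ converges absolutely on that line, with $|S_m(s)|\ll_K r_0^{\,m}$ uniformly in $\Im s$.
\item It then uses $\int_{\mathbb{R}}|\Gamma(s_K+iy)|\,dy<\infty$ to get $|R_{m,K}(t)|\ll_K r_0^{\,m}t^{K+1/2}$.
\item Separately, it identifies the main-term coefficients as $A_{m,j}=\frac{(-1)^j}{j!}S_m(-j)$ and sums them over $m$ with the same geometric bound.
\end{itemize}

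You instead drop the contour integral entirely for $m\ge M$. You Taylor-expand $e^{-a_n^2t}$ with the real Lagrange remainder. Since $2m-2K-2\ge 2$, every moment $\sum_n a_n^{2j-2m}$ with $j\le K+1$ is finite and $\ll_K r_0^{\,m}$.

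Both arguments rest on the same estimate, $\sum_n a_n^{-\alpha}\ll a_1^{-\alpha}$ for $\alpha$ bounded away from $1$. Your bound via $a_n/a_1\ge (2n+1)/3$ is a clean way to get it.

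Your route has three advantages:
\begin{itemize}
\item It is more elementary: no Stirling or vertical-line estimates are needed in the tail.
\item It gives the sharper tail error $O(t^{K+1})$.
\item It makes transparent why the large-$m$ terms contribute only integer powers of $t$, with coefficients $\frac{(-1)^j}{j!}S_m(-j)$.
\end{itemize}
The paper's route has the modest advantage of reusing the single Mellin representation from Proposition~\ref{prop:Theta-asymp} for every $m$.

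One small point for your head term, where you apply \eqref{eq:Theta-K} up to $m=K+1$. For $m=K+1$ the pole at $s=\tfrac12-m=s_K$ lies on the shifted contour. Proposition~\ref{prop:Theta-asymp} still yields $O(t^{K+1/2})$ there via Remark~\ref{rem:contour-shift}, since the residue is a multiple of $t^{K+1/2}$. So this is consistent with your check that nothing beyond $t^{K-1/2}$ survives.
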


\begin{proof}
Fix $r\ge 0$ and $K\ge 0$. By Proposition~\ref{prop:Fr-Theta},
\[
F_r(t)=\sum_{m=0}^\infty c_{r,m}\,\Theta_m(t),
\]
with absolute convergence for every $t>0$. For each $m\ge0$, apply
Proposition~\ref{prop:Theta-asymp} (with the same $K$) and write
\[
\Theta_m(t)=\mathcal{P}_{m,K}(t)+R_{m,K}(t),
\]
where $\mathcal{P}_{m,K}(t)$ is the truncated part from \eqref{eq:Theta-K} and
$R_{m,K}(t)=O(t^{K+1/2})$ as $t\to0^+$.

\medskip
\noindent\emph{Uniform control of the remainder and summation over $m$.}
Set $M:=K+2$. Split
\[
\sum_{m\ge0}c_{r,m}R_{m,K}(t)=\sum_{m\le M}c_{r,m}R_{m,K}(t)+\sum_{m>M}c_{r,m}R_{m,K}(t).
\]
The first sum is finite, hence $O(t^{K+1/2})$.

For the tail $m>M$, we use the remainder representation coming from the contour shift
in the proof of Proposition~\ref{prop:Theta-asymp}. Namely, if we write
$s_K:=-K-\tfrac12$, then (after subtracting residues up to order $K$)
\begin{equation}\label{eq:Rmk-integral}
R_{m,K}(t)=\frac{1}{2\pi i}\int_{\Re s=s_K}\Gamma(s)\,t^{-s}\,S_m(s)\,ds,
\end{equation}
where
\[
S_m(s)=\sum_{n\ge1}\bigl(n+\tfrac12\bigr)^{-2(m+s)}.
\]
Since $x\mapsto x^{-\alpha}$ is decreasing on $[a_1,\infty)$ for $\alpha>1$, we have the standard bound
\[
\sum_{n\ge1} a_n^{-\alpha}
\le a_1^{-\alpha}+\int_{a_1}^{\infty}x^{-\alpha}\,dx
= a_1^{-\alpha}+\frac{a_1^{-\alpha+1}}{\alpha-1}.
\]
Apply this with $\alpha:=2(m+\Re s)$. On the line $\Re s=s_K=-K-\tfrac12$ and for $m\ge K+2$,
we have $\alpha=2m-2K-1\ge3$, hence $(\alpha-1)^{-1}\le \tfrac12$. Therefore
\[
|S_m(s)|
\le \sum_{n\ge1}a_n^{-2(m+\Re s)}
\ll_K a_1^{-2(m+\Re s)}+a_1^{-2(m+\Re s)+1}
\ll_K a_1^{-2m}.
\]
Since $a_1=\tfrac32$, this gives the geometric bound $|S_m(s)|\ll_K (4/9)^m$ uniformly in $\Im s$. Since $a_1^{-2}=(\tfrac23)^2=\tfrac49=:r_0$, this yields the uniform geometric bound
\begin{equation}\label{eq:Sm-geometric}
|S_m(s)|\ll_{K} r_0^{\,m}\qquad(\Re s=s_K,\ m>M),
\end{equation}
uniformly in $\Im s$. Inserting \eqref{eq:Sm-geometric} into \eqref{eq:Rmk-integral} gives, for
$t\in(0,1]$ and $m>M$,
\[
|R_{m,K}(t)|
\le \frac{t^{-s_K}}{2\pi}\Bigl(\sup_{\Re s=s_K}|S_m(s)|\Bigr)\int_{-\infty}^{\infty}|\Gamma(s_K+iy)|\,dy
\ll_{K} r_0^{\,m}\,t^{K+1/2},
\]
since $\int_{\R}|\Gamma(s_K+iy)|\,dy<\infty$ for fixed $s_K$.
Therefore, using Lemma~\ref{lem:crm-bound} (i.e.\ $\sum_{m\ge0}|c_{r,m}|r_0^{\,m}<\infty$),
\[
\sum_{m>M}|c_{r,m}||R_{m,K}(t)| \ll_K t^{K+1/2}\sum_{m>M}|c_{r,m}|r_0^{\,m}
=O(t^{K+1/2}).
\]
Hence
\[
\sum_{m\ge0}c_{r,m}R_{m,K}(t)=O(t^{K+1/2})\qquad(t\to0^+).
\]

\medskip
\noindent\emph{Summation of the truncated pieces.}
The function $\mathcal{P}_{m,K}(t)$ is a finite linear combination of the monomials
$t^{-1/2}$, $t^0,t^1,\dots,t^K$, and (when $1\le m\le K$) the half-integer monomial $t^{m-1/2}$.
Thus $\sum_{m\ge0}c_{r,m}\mathcal{P}_{m,K}(t)$ is a finite linear combination of the same exponents,
once we justify absolute convergence for the integer-power coefficients.

For the integer powers $t^j$ ($0\le j\le K$), Proposition~\ref{prop:Theta-asymp} gives
\[
A_{m,j}=\frac{(-1)^j}{j!}\,S_m(-j).
\]
If $m\ge j+2$, then $S_m(-j)=\sum_{n\ge1}a_n^{-2(m-j)}$ converges absolutely and the same estimate as
\eqref{eq:Sm-geometric} yields
\[
|S_m(-j)|\ll_j r_0^{\,m}.
\]
Hence $\sum_{m\ge0}|c_{r,m}A_{m,j}|<\infty$ for each $j$, because only finitely many $m<j+2$ remain and
$\sum_{m\ge0}|c_{r,m}|r_0^{\,m}<\infty$.

Collecting coefficients of the finitely many exponents
$j\in\{-\tfrac12,0,\tfrac12,1,\dots,K\}$ yields constants $B_{r,j}$ such that
\eqref{eq:Fr-K} holds.
\end{proof}

\begin{lemma}[Geometric tail control for analytic weights]\label{lem:analytic-weight-tail}
Let $g(u)=\sum_{m\ge0}c_m u^m$ be analytic on $|u|<\rho$, and suppose $0\le u_n\le u_0<\rho$ for all $n$.
Then for each $M\ge0$ one has the uniform remainder bound
\[
g(u_n)=\sum_{m=0}^{M-1}c_m u_n^m \;+\; O_g\!\left(\Bigl(\frac{u_0}{\rho}\Bigr)^M\right),
\]
and consequently, for every $t\in(0,1]$,
\[
\sum_{n\ge0} g(u_n)\,e^{-a_n^2 t}
=
\sum_{m=0}^{M-1} c_m \sum_{n\ge0} u_n^m e^{-a_n^2 t}
\;+\;
O_g\!\left(\Bigl(\frac{u_0}{\rho}\Bigr)^M\sum_{n\ge0}e^{-a_n^2 t}\right),
\]
with an implied constant independent of $M$ and $t$.
\end{lemma}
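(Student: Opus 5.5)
The plan is to use Cauchy's estimates for the coefficients $c_m$ and then sum a geometric tail; nothing beyond the argument of Lemma~\ref{lem:crm-bound} is needed. One adjustment is required first: a bound exactly on the circle $|u|=\rho$ is not available from analyticity on the open disc $|u|<\rho$, and a uniform-in-$M$ constant with ratio exactly $u_0/\rho$ may fail for general $g$. I will therefore read the hypothesis as saying that $g$ is analytic and bounded on a slightly larger disc, equivalently that $\sum_m |c_m|\rho^m<\infty$. In our application $\rho<4$ and $q^r$ is analytic on $|u|<4$, so this holds.

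\emph{Step 1: coefficient bound.} Set $G:=\sup_{|u|=\rho}|g(u)|<\infty$. Cauchy's estimate, exactly as in the proof of Lemma~\ref{lem:crm-bound}, gives $|c_m|\le G\rho^{-m}$ for all $m\ge 0$.

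\emph{Step 2: pointwise tail.} For $0\le u_n\le u_0<\rho$ the power series converges absolutely at $u_n$. Hence
\[
\Bigl|g(u_n)-\sum_{m<M}c_m u_n^m\Bigr|\le \sum_{m\ge M}G\Bigl(\frac{u_0}{\rho}\Bigr)^m=\frac{G}{1-u_0/\rho}\Bigl(\frac{u_0}{\rho}\Bigr)^M .
\]
The constant $G(1-u_0/\rho)^{-1}$ depends only on $g$, $u_0$ and $\rho$. It does not depend on $n$, $M$ or $t$.

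\emph{Step 3: summation.} Multiply the pointwise identity by $e^{-a_n^2t}$ and sum over $n$. Each of the finitely many sums $\sum_n c_m u_n^m e^{-a_n^2t}$ converges absolutely, because $|u_n^m|\le u_0^m$ and $\sum_n e^{-a_n^2t}<\infty$ for $t>0$. So the finite sum over $m$ may be interchanged with the sum over $n$. The remainder terms sum to at most the constant from Step 2 times $(u_0/\rho)^M\sum_n e^{-a_n^2t}$, which is the claimed bound, uniform in $t\in(0,1]$ and $M$. The restriction $t\le 1$ plays no role beyond ensuring convergence.

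The only point needing care is the one already flagged: the boundary behaviour of $g$ on $|u|=\rho$. If one insists on $g$ being merely analytic on the open disc, I would replace $\rho$ by any $\rho'\in(u_0,\rho)$, and the lemma then holds with ratio $u_0/\rho'$. This is all that is used later, since only geometric decay in $M$ matters.
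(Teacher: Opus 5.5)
Your argument is correct. The paper states this lemma without any proof; it is followed directly by Remark~\ref{rem:analytic-weight-tail-use}. What you give is the natural argument: a Cauchy bound $|c_m|\le G\rho^{-m}$, a geometric tail summed uniformly in $n$, then multiplication by $e^{-a_n^2t}$ and summation of a finite linear combination. This is the same mechanism as the paper's proof of Lemma~\ref{lem:crm-bound}, where the circle $|u|=\rho$ sits strictly inside the domain of analyticity.

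Your caveat is justified, and it is a defect of the statement, not of your proof. The function $g(u)=(1-u/\rho)^{-2}=\sum_{m\ge0}(m+1)\rho^{-m}u^m$ is analytic on $|u|<\rho$. Yet at $u_n=u_0$ the remainder satisfies $\sum_{m\ge M}(m+1)(u_0/\rho)^m\ge (M+1)(u_0/\rho)^M$. So no constant independent of $M$ exists with ratio exactly $u_0/\rho$.

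The weakest repair is to assume $\sup_{|u|<\rho}|g|<\infty$. Applying Cauchy's estimate on $|u|=\rho'$ and letting $\rho'\uparrow\rho$ then already gives $|c_m|\le\rho^{-m}\sup|g|$. Alternatively, shrink $\rho$ to any $\rho'\in(u_0,\rho)$, as you propose. Either fix is harmless downstream, since $q^r$ and $q_p^r$ are analytic on $|u|<4$ while $u_0=4/9$.

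One small slip: $\sum_m|c_m|\rho^m<\infty$ is \emph{not} equivalent to analyticity on a larger disc. For example, $c_m=m^{-2}\rho^{-m}$ has radius of convergence exactly $\rho$. That condition does suffice, and more directly than your Step~1, via $\sum_{m\ge M}|c_m|u_0^m\le (u_0/\rho)^M\sum_{m\ge M}|c_m|\rho^m$. This route also avoids needing $g$ to be defined on the circle $|u|=\rho$.
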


\begin{remark}\label{rem:analytic-weight-tail-use}
In applications we take $g(u)=q(u)^r$ (or $g(u)=q_p(u)^r$). Since
$\sum_{n\ge0}e^{-a_n^2 t}\asymp t^{-1/2}$ as $t\to0^+$, the tail term in
Lemma~\ref{lem:analytic-weight-tail} is $O((u_0/\rho)^M t^{-1/2})$. Hence for any fixed truncation
order in $t$ one may first expand each finite $m$-sum and then let $M\to\infty$, justifying
termwise summation of the resulting asymptotic expansions.
\end{remark}

\subsection{Full heat-trace expansion and admissibility}\label{sec:final}

We now combine the reduction of Section~\ref{sec:reduction} with the asymptotics of Sections~\ref{subsec:Theta-asymp} and \ref{subsec:Fr-asymp}.

\begin{theorem}[Full small-time expansion of $\theta(t)$]\label{thm:theta-asymp}
There exist constants $(c_\nu)_{\nu\ge 0}$ and strictly increasing exponents
$j_0<j_1<\cdots\to+\infty$ with $j_\nu\in \tfrac12\Z$ such that
\[
\theta(t)\sim \sum_{\nu=0}^\infty c_\nu t^{j_\nu}\qquad (t\to 0^+).
\]
More concretely, for every integer $N\ge 0$ there exist constants
$c_{-1/2},c_0,c_{1/2},\dots,c_N$ such that
\begin{equation}\label{eq:theta-N}
\theta(t)=\sum_{j\in\{-\tfrac12,0,\tfrac12,\dots,N\}} c_j\,t^{j} + O(t^{N+1/2})
\qquad (t\to 0^+).
\end{equation}
\end{theorem}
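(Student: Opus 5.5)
We combine the finite reduction of Proposition~\ref{prop:reduction} with the expansions of the weighted sums $F_r$ from Proposition~\ref{prop:Fr-asymp}. Fix $N\ge0$ and choose the $t$-truncation order $R:=N$. Proposition~\ref{prop:reduction} gives, for $t\in(0,1]$,
\[
\theta(t)=\sum_{r=0}^{N}\frac{(-t)^r}{r!}F_r(t)+\Err_N(t),\qquad \Err_N(t)=O(t^{N+1})\Theta_0(t).
\]
Proposition~\ref{prop:Theta-asymp} gives $\Theta_0(t)=O(t^{-1/2})$, so the error is $O(t^{N+1/2})$, which is exactly the remainder allowed in \eqref{eq:theta-N}.

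Next, for each $0\le r\le N$ we apply Proposition~\ref{prop:Fr-asymp} with $K:=N$. This writes $F_r(t)$ as a finite combination $\sum_j B_{r,j}t^j$ over $j\in\{-\tfrac12,0,\tfrac12,1,\dots,N\}$, plus a remainder $O(t^{N+1/2})$. Here we read the index set as all half-integers $j\le N$ with $j\ge -\tfrac12$; the half-integer terms $t^{m-1/2}$ come from $1\le m\le K$. Multiplying by $(-t)^r/r!$ shifts every exponent by $r\ge0$. The remainder becomes $O(t^{N+r+1/2})\subset O(t^{N+1/2})$ on $(0,1]$. Any resulting monomial $t^{j+r}$ with $j+r>N$ is itself $O(t^{N+1/2})$, since its exponent is a half-integer strictly exceeding $N$, and so it is absorbed into the error.

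Collecting the finitely many surviving monomials by exponent gives
\[
c_j:=\sum_{r+j'=j}\frac{(-1)^r}{r!}B_{r,j'},
\]
and hence \eqref{eq:theta-N}. All exponents lie in $\tfrac12\Z\cap[-\tfrac12,\infty)$.

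To get the full asymptotic series, we check that the coefficients $c_j$ do not depend on $N$. Two expansions of the form \eqref{eq:theta-N} with different $N$ must agree on their common range of exponents, by uniqueness of asymptotic expansions in the scale $t^j$: subtract them and divide successively by the lowest power. Hence a single sequence $(c_j)_{j\in\frac12\Z,\,j\ge-1/2}$ is well defined. Listing the indices with $c_j\ne0$, or simply all half-integers $\ge -\tfrac12$, in increasing order gives $j_0<j_1<\cdots\to+\infty$ and $\theta(t)\sim\sum_\nu c_\nu t^{j_\nu}$.

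The step that needs the most care is making sure the remainders are uniform on $(0,1]$ and not merely as $t\to0^+$. Both Proposition~\ref{prop:reduction} and Proposition~\ref{prop:Fr-asymp} are stated with $O$-bounds for small $t$, and we only need finitely many of them ($r\le N$). So taking the minimum of the finitely many thresholds suffices, and no infinite summation over $r$ is required. The infinite $m$-summation has already been handled inside Proposition~\ref{prop:Fr-asymp}.
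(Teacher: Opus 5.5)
Your proposal is correct and follows the paper's proof: Proposition~\ref{prop:reduction} with $R=N$, the bound $\Theta_0(t)=O(t^{-1/2})$, Proposition~\ref{prop:Fr-asymp} for each of the finitely many $r\le N$, then collecting powers. The differences are cosmetic. You take $K=N$ and absorb the monomials of exponent $>N$ into the error, where the paper takes $K=N-r$. You also make explicit the uniqueness argument showing the $c_j$ are independent of $N$, which the paper leaves implicit.
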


\begin{proof}
Fix $N\ge 0$ and apply Proposition~\ref{prop:reduction} with $R=N$:
\[
\theta(t)=\sum_{r=0}^{N}\frac{(-t)^r}{r!}\,F_r(t)+O(t^{N+1})\Theta_0(t),
\qquad t\in(0,1].
\]
By Corollary~\ref{cor:theta-basic}, $\Theta_0(t)=O(t^{-1/2})$ as $t\to 0^+$, hence
\[
O(t^{N+1})\Theta_0(t)=O(t^{N+1/2}).
\]

For each fixed $r\in\{0,1,\dots,N\}$, apply Proposition~\ref{prop:Fr-asymp} with
$K=N-r$ to obtain
\[
F_r(t)=\sum_{j\in\{-\tfrac12,0,\tfrac12,\dots,N-r\}}B_{r,j}\,t^{j}
+O(t^{N-r+1/2}).
\]
Multiplying by $t^r$ gives
\[
\frac{(-t)^r}{r!}F_r(t)
=\sum_{j\in\{-\tfrac12,0,\tfrac12,\dots,N-r\}}\frac{(-1)^r}{r!}B_{r,j}\,t^{j+r}
+O(t^{N+1/2}).
\]
Summing over $r=0,1,\dots,N$ (a finite sum) and collecting like powers yields an
expansion of $\theta(t)$ involving only half-integer exponents up to $t^N$, with total
remainder $O(t^{N+1/2})$. This is exactly \eqref{eq:theta-N}. Since $N$ is arbitrary,
a full asymptotic expansion follows.
\end{proof}

\begin{corollary}[Admissibility]\label{cor:admissible}
The sequence $(\lambda_n)_{n\ge 1}$ with $\lambda_n=\tau_n^2$ is admissible
in the sense of Definition~\ref{def:admissible}.
\end{corollary}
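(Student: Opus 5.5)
The plan is to verify the three conditions of Definition~\ref{def:admissible} one at a time for $X_n=\lambda_n=\tau_n^2$, using only results already established.

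For condition (1), Lemma~\ref{lem:tau-lower} gives $\tau_n\ge n\ge1$. Hence every $\lambda_n$ is a real number with $\lambda_n\ge n^2\ge 1>0$, so each $\lambda_n$ lies in $\Ree X>0$.

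For condition (3), since $\lambda_n>0$ we have $|\lambda_n|=\lambda_n$. Therefore $\sum_n \e^{-|\lambda_n|t}=\theta(t)$. Corollary~\ref{cor:theta-basic} gives $\theta(t)=O(t^{-1/2})$, so $t^\beta\theta(t)\to0$ for $\beta=1$ (indeed for any $\beta>1/2$).

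For condition (2), absolute convergence for each $t>0$ is part of Corollary~\ref{cor:theta-basic}. The full expansion is Theorem~\ref{thm:theta-asymp}.

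The only point needing care is converting the family of truncated expansions \eqref{eq:theta-N}, one for each $N$, into a single asymptotic series $\sum_\nu c_\nu t^{j_\nu}$ with strictly increasing exponents. I would check that the coefficient $c_j$ of $t^j$ obtained at level $N$ does not depend on $N$ for $j\le N$. This follows from uniqueness of asymptotic expansions in the scale $\{t^j\}_{j\in\frac12\Z}$: subtract the level-$N$ and level-$N'$ expansions and compare the lowest surviving power as $t\to0^+$. Once the coefficients are consistent, I discard the exponents whose coefficients vanish and enumerate the remaining ones in increasing order. If only finitely many are nonzero, I pad with zero coefficients to get an infinite increasing sequence $j_\nu\to+\infty$ (any sequence of exponents in $\tfrac12\Z$ tending to $+\infty$ works). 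Each partial sum then approximates $\theta$ with an error of lower order than the last retained power.

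None of these steps is a real obstacle, since all the analytic input is in Theorem~\ref{thm:theta-asymp}. The corollary is essentially bookkeeping, and the one thing to write out explicitly is the consistency-of-coefficients argument above.
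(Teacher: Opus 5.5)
Your proposal is correct and follows the paper's proof exactly. Condition (1) comes from $\lambda_n>0$, condition (3) and absolute convergence come from Corollary~\ref{cor:theta-basic}, and the full pure-power expansion comes from Theorem~\ref{thm:theta-asymp}. The coefficient-consistency bookkeeping you add is harmless but not strictly needed, since the first assertion of Theorem~\ref{thm:theta-asymp} already packages the truncated expansions into a single series $\sum_\nu c_\nu t^{j_\nu}$ with $j_\nu\to+\infty$.
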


\begin{proof}
Condition (1) holds because $\lambda_n>0$.
Condition (3) holds by Corollary~\ref{cor:theta-basic}.
Condition (2) holds because $\theta(t)$ converges absolutely for $t>0$ (Corollary~\ref{cor:theta-basic})
and has a full asymptotic expansion as $t\to 0^+$ by Theorem~\ref{thm:theta-asymp}.
\end{proof}

\section{Consequences and applications of admissibility}\label{sec:consequences}

This section records several standard consequences of admissibility which will be useful
in applications (e.g.\ zeta regularization, coefficient extraction, and large-parameter
asymptotics of canonical products). All statements are proved directly from the heat-trace expansion established in Theorem \ref{thm:theta-asymp}. The Mellin-transform representation, meromorphic continuation of the spectral zeta function, and special-value identities at nonpositive integers are instances of the general Mellin machinery in \cite[\S2]{QHS} (notably their Theorems~3--4). We include proofs for completeness and to record the precise coefficients in our normalization.

\subsection{Leading heat coefficients and a Weyl law}

We first isolate the first two coefficients in the small-time expansion of $\theta(t)$.
These coefficients are universal for the present model because the correction $q(u_n)$ is bounded.

\begin{lemma}[Two-sided bounds for $\tau_n$]\label{lem:tau-two-sided}
For every $n\ge 1$,
\[
n\le \tau_n \le n+\frac12,
\qquad\text{and hence}\qquad
n^2\le \lambda_n\le \Bigl(n+\frac12\Bigr)^2.
\]
\end{lemma}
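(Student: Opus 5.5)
The plan is to deduce both bounds directly from the representation \eqref{eq:tau-rho} of Theorem~\ref{thm:rho}, namely $\tau_n=a_n-\rho(a_n^{-1})$ with $a_n=n+\tfrac12$, and from the sign and monotonicity properties of $\rho$ on $[0,2]$. No asymptotic input is needed.

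For the lower bound $\tau_n\ge n$ we simply cite Lemma~\ref{lem:tau-lower}. Its proof uses two facts: $\rho$ is increasing on $[0,2]$, since its coefficients are nonnegative and the series converges at $z=2$; and $a_n^{-1}\le\tfrac23<2$. Together these give $\rho(a_n^{-1})\le\rho(2)=\tfrac12$.

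For the upper bound we use that $\rho(z)=\sum_{m\ge1}a_m z^m$ has nonnegative coefficients and no constant term. Hence $\rho(z)\ge0$ for $z\in[0,2]$. Applying this at $z=a_n^{-1}\in(0,\tfrac23]$ gives $\tau_n=a_n-\rho(a_n^{-1})\le a_n=n+\tfrac12$.

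Combining the two bounds yields $0<n\le\tau_n\le n+\tfrac12$. Since $x\mapsto x^2$ is increasing on $[0,\infty)$, squaring gives $n^2\le\lambda_n\le(n+\tfrac12)^2$.

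The only point needing care is that everything is evaluated at real arguments inside the closed disc of convergence $|z|\le2$. This is guaranteed because $a_n^{-1}\le\tfrac23$ for all $n\ge1$, so both nonnegativity and monotonicity of $\rho$ are legitimate there. There is no real obstacle beyond this.
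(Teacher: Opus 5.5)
Your proposal is correct and follows the paper's proof: the lower bound is Lemma~\ref{lem:tau-lower}, the upper bound comes from $\rho(a_n^{-1})\ge 0$ (nonnegative coefficients) in \eqref{eq:tau-rho}, and squaring the nonnegative quantities gives the bounds for $\lambda_n$.
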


\begin{proof}
The lower bound $\tau_n\ge n$ is Lemma \ref{lem:tau-lower}.
For the upper bound, note from \eqref{eq:tau-rho} that $\tau_n=a_n-\rho(a_n^{-1})$ with
$\rho(a_n^{-1})\ge 0$ (nonnegative coefficients), hence $\tau_n\le a_n=n+\tfrac12$.
Squaring yields the corresponding bounds for $\lambda_n=\tau_n^2$.
\end{proof}

\begin{lemma}[Model half-integer heat trace]\label{lem:Theta0-two-terms}
Let $a_n=n+\tfrac12$ and $\Theta_0(t)=\sum_{n\ge 1}\e^{-a_n^2 t}$.
Then as $t\to 0^+$,
\begin{equation}\label{eq:Theta0-two-terms}
\Theta_0(t)=\frac{\sqrt{\pi}}{2}\,t^{-1/2}-1+\frac14\,t+O(t^2).
\end{equation}
In particular, $\Theta_0(t)=\frac{\sqrt{\pi}}{2}\,t^{-1/2}-1+O(t)$.
\end{lemma}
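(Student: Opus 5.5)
The plan is to compute the expansion exactly via the Jacobi theta transformation, and to cross-check the coefficients against the Mellin residues from Proposition~\ref{prop:Theta-asymp} with $m=0$.

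\textbf{Step 1: symmetrize.} Since $(n+\tfrac12)^2=(-n-1+\tfrac12)^2$, the map $n\mapsto -n-1$ is a bijection of $\Z_{\ge0}$ onto $\Z_{<0}$ preserving the summand. Hence
\[
\sum_{n\in\Z}\e^{-(n+\frac12)^2 t}=2\sum_{n\ge 0}\e^{-(n+\frac12)^2t}=2\bigl(\Theta_0(t)+\e^{-t/4}\bigr).
\]

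\textbf{Step 2: Poisson summation.} Poisson summation for the Gaussian $x\mapsto \e^{-(x+\frac12)^2 t}$ (the Jacobi theta transformation) gives
\[
\sum_{n\in\Z}\e^{-(n+\frac12)^2 t}=\sqrt{\frac{\pi}{t}}\sum_{k\in\Z}(-1)^k\e^{-\pi^2k^2/t}.
\]
The $k\neq0$ terms contribute $O(t^{-1/2}\e^{-\pi^2/t})$, which is smaller than any power of $t$.

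\textbf{Step 3: expand.} Combining the two steps,
\[
\Theta_0(t)=\frac{\sqrt\pi}{2}t^{-1/2}-\e^{-t/4}+O(t^\infty).
\]
Taylor expanding $\e^{-t/4}=1-\tfrac14 t+O(t^2)$ then yields \eqref{eq:Theta0-two-terms} directly. In fact this gives the whole expansion: the only nonzero coefficients are those of $t^{-1/2}$ and of $-\e^{-t/4}$.

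\textbf{Cross-check via Mellin residues.} Take Proposition~\ref{prop:Theta-asymp} with $m=0$ and $K=2$. There $A_{0,j}=\frac{(-1)^j}{j!}S_0(-j)$, where by \eqref{eq:Sm-correct}
\[
S_0(s)=(2^{2s}-1)\zeta(2s)-2^{2s}.
\]
\begin{itemize}
\item At $s=0$ the factor $2^{0}-1=0$ vanishes, so $S_0(0)=-1$.
\item At $s=-1$ we have $\zeta(-2)=0$, so $S_0(-1)=-\tfrac14$ and $A_{0,1}=+\tfrac14$.
\item At $s=-2$, similarly $\zeta(-4)=0$, so the $t^2$ term, together with the $O(t^{5/2})$ remainder, is absorbed into $O(t^2)$.
\end{itemize}
The pole at $s=\tfrac12$ gives $\frac{\sqrt\pi}{2}t^{-1/2}$, and no other half-integer powers occur for $m=0$. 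This reproduces the same coefficients.

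\textbf{Main obstacle.} The only delicate point is justifying the theta transformation, i.e.\ Poisson summation for a Schwartz function; this is standard. The trivial-zero evaluations $\zeta(-2k)=0$ explain why no other integer powers appear beyond those coming from $\e^{-t/4}$.
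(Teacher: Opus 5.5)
Your proof is correct, but your main argument takes a different route from the paper. The paper's proof is essentially your ``cross-check''. It writes $\Theta_0$ as the Mellin integral of $\Gamma(s)t^{-s}S_0(s)$ and shifts the contour to $\Ree s=-\tfrac52$, using Stirling and polynomial growth of $\zeta$ in vertical strips. It then reads off the residues at $s=\tfrac12,0,-1$ from $\zeta(0)=-\tfrac12$ and $\zeta(-2)=0$, absorbing the residue at $s=-2$ and the shifted integral into $O(t^2)$.

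You instead use the reflection $n\mapsto -n-1$ and the Jacobi theta transformation to get the exact identity
\[
\Theta_0(t)=\frac{\sqrt{\pi}}{2}\,t^{-1/2}-\e^{-t/4}+O\bigl(t^{-1/2}\e^{-\pi^2/t}\bigr).
\]
This is more elementary, yields every coefficient at once with an exponentially small error, and explains the residue computation: the numbers $\frac{(-1)^j}{j!}S_0(-j)=\frac{(-1)^{j+1}}{j!\,4^{j}}$ are precisely the Taylor coefficients of $-\e^{-t/4}$.

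What the Mellin route buys is portability. Your trick needs the summand to be a pure Gaussian and the shift to be a half-integer, so that the one-sided sum is half of a full lattice sum. The paper reuses the same contour-shift mechanism for the weighted sums $\Theta_m$ ($m\ge1$) in Proposition~\ref{prop:Theta-asymp}, and for arbitrary shifts $a$ in Theorem~\ref{thm:general-criterion}. There $\zeta(-2j,a)$ is generally nonzero and the reflection trick is unavailable.

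One small slip in your cross-check: $\zeta(-4)=0$ is not the reason the $t^2$ term can be absorbed. Its coefficient is $A_{0,2}=\tfrac12 S_0(-2)=-\tfrac1{32}\neq0$, matching $-\e^{-t/4}$. It is absorbed into the remainder simply because it is $O(t^2)$.
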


\begin{proof}
This is the case $m=0$ of Proposition \ref{prop:Theta-asymp} with explicit residues. One has
\[
\Theta_0(t)=\frac{1}{2\pi i}\int_{\Ree s=c}\Gamma(s)\,t^{-s}\,
\Bigl((2^{2s}-1)\zeta(2s)-2^{2s}\Bigr)\,ds
\]
for any $c>\tfrac12$.
Shift the contour to $\Ree s=-\tfrac52$.
The poles crossed are:
\begin{itemize}[leftmargin=2.5em]
\item $s=\tfrac12$ (simple pole of $\zeta(2s)$ at $2s=1$);
\item $s=0,-1,-2$ (simple poles of $\Gamma(s)$).
\end{itemize}
All other possible poles lie to the left of $\Ree s=-\tfrac52$.
The remainder integral on $\Ree s=-\tfrac52$ is $O(t^{5/2})$ and hence $O(t^2)$
by the same bounds used in Proposition \ref{prop:Theta-asymp}.

We now compute residues.
Write $S_0(s):=(2^{2s}-1)\zeta(2s)-2^{2s}$.
Near $s=\tfrac12$, $\zeta(2s)\sim \frac{1}{2s-1}$, hence $S_0(s)\sim \frac{1}{2s-1}$.
Therefore
\[
\operatorname{Res}_{s=1/2}\bigl(\Gamma(s)t^{-s}S_0(s)\bigr)
=\Gamma\!\Bigl(\frac12\Bigr)\,t^{-1/2}\cdot \operatorname{Res}_{s=1/2}\frac{1}{2s-1}
=\sqrt{\pi}\,t^{-1/2}\cdot \frac12=\frac{\sqrt{\pi}}{2}\,t^{-1/2}.
\]
At $s=0$, $\Gamma(s)$ has residue $1$ and $t^{-s}=1+O(s)$, so the residue equals $S_0(0)$.
Since $\zeta(0)=-\tfrac12$ and $2^{0}-1=0$, we have $S_0(0)=-1$, giving the constant term $-1$.
At $s=-1$, $\operatorname{Res}_{s=-1}\Gamma(s)=-1$ and $t^{-s}=t$, while
\[
S_0(-1)=(2^{-2}-1)\zeta(-2)-2^{-2}=0-\frac14=-\frac14,
\]
because $\zeta(-2)=0$.
Thus the residue at $s=-1$ equals $(-1)\cdot t\cdot (-1/4)=t/4$.

Crossing $s=-2$ produces an additional residue term of size $O(t^2)$, which we absorb into the $O(t^2)$ remainder.

Summing residues gives \eqref{eq:Theta0-two-terms}.
\end{proof}

\begin{proposition}[First heat coefficients of $\theta(t)$]\label{prop:first-heat-coeffs}
As $t\to 0^+$,
\begin{equation}\label{eq:theta-first-coeffs}
\theta(t)=\frac{\sqrt{\pi}}{2}\,t^{-1/2}-1+O(t^{1/2}).
\end{equation}
In particular, in the notation of Theorem~\ref{thm:theta-asymp} one has
\[
c_{-1/2}=\frac{\sqrt{\pi}}{2},
\qquad
c_0=-1.
\]
\end{proposition}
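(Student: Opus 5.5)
We compare $\theta(t)$ directly with the model sum $\Theta_0(t)$ from Lemma~\ref{lem:Theta0-two-terms}, rather than tracking the coefficients through the general machinery of Theorem~\ref{thm:theta-asymp}. The identity \eqref{eq:theta-split-correct} gives
\[
\theta(t)-\Theta_0(t)=\sum_{n\ge1}\e^{-a_n^2 t}\bigl(\e^{-tq(u_n)}-1\bigr).
\]
Because $|q(u_n)|\le Q_*$ uniformly in $n$, the elementary estimate $|\e^{-tx}-1|\le t|x|\e^{t|x|}$ (the case $R=0$ of Lemma~\ref{lem:exp-taylor}) yields, for $t\in(0,1]$,
\[
|\theta(t)-\Theta_0(t)|\le Q_*\e^{Q_*}\,t\,\Theta_0(t).
\]

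Next we substitute the asymptotics of the model sum. Lemma~\ref{lem:Theta0-two-terms} gives $\Theta_0(t)=O(t^{-1/2})$, so the difference above is $O(t^{1/2})$. Lemma~\ref{lem:Theta0-two-terms} also gives $\Theta_0(t)=\frac{\sqrt\pi}{2}t^{-1/2}-1+O(t)$. Combining the two estimates produces \eqref{eq:theta-first-coeffs}.

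To identify the coefficients, note that Theorem~\ref{thm:theta-asymp} provides an asymptotic expansion of $\theta(t)$ with exponents in $\tfrac12\Z$, and such an expansion is unique. Comparing it with \eqref{eq:theta-first-coeffs} forces $c_{-1/2}=\sqrt\pi/2$ and $c_0=-1$. It also shows that no exponent below $-\tfrac12$ occurs, since $\theta(t)=O(t^{-1/2})$ by Corollary~\ref{cor:theta-basic}.

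No step is a real obstacle. The one place needing care is that the perturbation contributes only at order $t^{1/2}$, not at order $t^0$. This holds because the $t$-factor gained from $\e^{-tq}-1$ is offset only by the $t^{-1/2}$ growth of $\Theta_0$, and that relies on $q$ being bounded, which is exactly what Lemma~\ref{lem:lambda-q} supplies.
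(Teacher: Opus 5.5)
Your proposal is correct and is essentially the paper's own proof. You bound $|\e^{-tq(u_n)}-1|\le tQ_*\e^{Q_*}$ uniformly in $n$ for $t\in(0,1]$, conclude $\theta(t)-\Theta_0(t)=O(t)\,\Theta_0(t)=O(t^{1/2})$, and insert the two-term expansion of $\Theta_0$ from Lemma~\ref{lem:Theta0-two-terms}. Your appeal to uniqueness of asymptotic expansions, used to identify $c_{-1/2}$ and $c_0$ with the coefficients of Theorem~\ref{thm:theta-asymp}, is a small step the paper leaves implicit.
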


\begin{proof}
Recall $\lambda_n=a_n^2+q(u_n)$ with $a_n=n+\tfrac12$ and $u_n=a_n^{-2}\in(0,4/9]$.
Since $q$ is continuous on $[0,4/9]$, set
\[
Q_*:=\sup_{u\in[0,4/9]}|q(u)|<\infty.
\]
Then for $t\in(0,1]$ and all $n$,
\[
\bigl|e^{-tq(u_n)}-1\bigr|
\le t\,|q(u_n)|\,e^{t|q(u_n)|}
\le t\,Q_* e^{Q_*}
=O(t),
\]
uniformly in $n$. Hence
\[
\theta(t)-\Theta_0(t)
=\sum_{n\ge1}e^{-a_n^2t}\bigl(e^{-tq(u_n)}-1\bigr)
=O(t)\sum_{n\ge1}e^{-a_n^2t}
=O(t)\,\Theta_0(t).
\]
By Lemma~\ref{lem:Theta0-two-terms}, $\Theta_0(t)=O(t^{-1/2})$, so
$\theta(t)-\Theta_0(t)=O(t^{1/2})$. Inserting the two-term expansion
$\Theta_0(t)=\frac{\sqrt{\pi}}{2}t^{-1/2}-1+O(t)$ from Lemma~\ref{lem:Theta0-two-terms}
yields \eqref{eq:theta-first-coeffs}.
\end{proof}

As a byproduct we obtain the sharp one-dimensional Weyl law for the counting function.

\begin{corollary}[Weyl law for the counting function]\label{cor:weyl}
Let
\[
N(\Lambda):=\#\{n\ge 1:\ \lambda_n\le \Lambda\},\qquad \Lambda\ge 0.
\]
Then for all $\Lambda\ge 0$,
\[
\lfloor\sqrt{\Lambda}-\frac12\ \rfloor\le\ N(\Lambda)\ \le\ \lfloor\sqrt{\Lambda}\rfloor,
\]
and in particular $N(\Lambda)=\sqrt{\Lambda}+O(1)$ as $\Lambda\to\infty$.
\end{corollary}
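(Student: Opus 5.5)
The plan is to deduce the counting bounds directly from the two-sided estimate of Lemma~\ref{lem:tau-two-sided}, namely $n^2\le\lambda_n\le (n+\tfrac12)^2$ for all $n\ge1$, together with the monotonicity of $(\lambda_n)$, which follows from $0<\tau_1<\tau_2<\cdots$. Because the sequence is strictly increasing, $N(\Lambda)$ equals the largest index $n$ with $\lambda_n\le\Lambda$, or $0$ if there is none. It therefore suffices to compare the condition $\lambda_n\le\Lambda$ with the explicit conditions $n\le\sqrt\Lambda$ and $n+\tfrac12\le\sqrt\Lambda$.

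For the upper bound, suppose $\lambda_n\le\Lambda$. Then $n^2\le\lambda_n\le\Lambda$, so $n\le\sqrt\Lambda$, and since $n$ is an integer, $n\le\lfloor\sqrt\Lambda\rfloor$. Every index counted in $N(\Lambda)$ thus lies in $\{1,\dots,\lfloor\sqrt\Lambda\rfloor\}$, which gives $N(\Lambda)\le\lfloor\sqrt\Lambda\rfloor$.

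For the lower bound, take any integer $n\ge1$ with $n\le\sqrt\Lambda-\tfrac12$. Then $\lambda_n\le(n+\tfrac12)^2\le\Lambda$, so $n$ is counted. All integers $1\le n\le\lfloor\sqrt\Lambda-\tfrac12\rfloor$ are therefore counted, and $N(\Lambda)\ge\lfloor\sqrt\Lambda-\tfrac12\rfloor$. If $\sqrt\Lambda<\tfrac12$, the floor is $-1$ and the inequality holds trivially because $N(\Lambda)\ge0$. The asymptotic $N(\Lambda)=\sqrt\Lambda+O(1)$ follows at once, since both bounds lie within $3/2$ of $\sqrt\Lambda$.

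No step is a real obstacle; the only point needing care is the edge case of small $\Lambda$ just mentioned. As a sanity check, the result is consistent with Proposition~\ref{prop:first-heat-coeffs}: a Karamata-type Tauberian argument applied to $\theta(t)\sim\frac{\sqrt\pi}{2}t^{-1/2}$ also gives $N(\Lambda)\sim\sqrt\Lambda$. That route only yields $o(\sqrt\Lambda)$ errors, however, so the elementary comparison above is the one to use for the sharp $O(1)$ bound.
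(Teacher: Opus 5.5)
Your proof is correct and is essentially the paper's argument: both bounds come directly from $n^2\le\lambda_n\le(n+\tfrac12)^2$ (Lemma~\ref{lem:tau-two-sided}), by comparing the condition $\lambda_n\le\Lambda$ with $n\le\sqrt\Lambda$ and with $n+\tfrac12\le\sqrt\Lambda$. Monotonicity of $(\lambda_n)$ is not needed, since your index-set inclusions already give both inequalities; your explicit handling of the small-$\Lambda$ edge case is a detail the paper leaves implicit.
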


\begin{proof}
By Lemma \ref{lem:tau-two-sided}, $\lambda_n\le (n+\tfrac12)^2$ and $\lambda_n\ge n^2$.
If $(n+\tfrac12)^2\le \Lambda$ then $\lambda_n\le \Lambda$, hence $N(\Lambda)\ge \lfloor \sqrt{\Lambda}-\tfrac12\rfloor$.
Conversely, if $\lambda_n\le \Lambda$ then $n^2\le \Lambda$, hence $n\le \sqrt{\Lambda}$ and $N(\Lambda)\le \lfloor \sqrt{\Lambda}\rfloor$.
The displayed bounds follow.
\end{proof}

\subsection{The spectral zeta function: meromorphic continuation and special values}

Define the spectral zeta function associated with $\lambda_n$ by
\begin{equation}\label{eq:def-Z}
Z(s):=\sum_{n\ge 1}\lambda_n^{-s},\qquad \Ree s>\frac12.
\end{equation}
Absolute convergence holds for $\Ree s>\tfrac12$ because $\lambda_n\ge n^2$ (Lemma \ref{lem:tau-lower}).

\begin{lemma}[Exponential decay of the heat trace]\label{lem:theta-exp-decay}
There exists a constant $C>0$ such that for all $t\ge 1$,
\[
\theta(t)\le C\,\e^{-\lambda_1 t}.
\]
\end{lemma}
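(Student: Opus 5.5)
The plan is to factor out the lowest eigenvalue and then bound what remains uniformly for $t\ge1$. For $t\ge 1$ write
\[
\theta(t)=\e^{-\lambda_1 t}\sum_{n\ge1}\e^{-(\lambda_n-\lambda_1)t}.
\]
Since $\tau_n$ is strictly increasing and positive, the sequence $\lambda_n=\tau_n^2$ is increasing, so $\lambda_n-\lambda_1\ge0$ for every $n$. For $t\ge1$ each term therefore satisfies $\e^{-(\lambda_n-\lambda_1)t}\le \e^{-(\lambda_n-\lambda_1)}$. It follows that
\[
\theta(t)\le \e^{-\lambda_1 t}\sum_{n\ge1}\e^{-(\lambda_n-\lambda_1)}=\e^{\lambda_1}\,\theta(1)\,\e^{-\lambda_1 t}.
\]

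It remains to check that the constant $C:=\e^{\lambda_1}\theta(1)$ is finite. By Corollary~\ref{cor:theta-basic} the series $\theta(1)$ converges. Alternatively, Lemma~\ref{lem:tau-lower} gives $\theta(1)\le\sum_{n\ge1}\e^{-n^2}<\infty$ directly. Also $\lambda_1\le(3/2)^2$ by Lemma~\ref{lem:tau-two-sided}, so $C$ is an explicit finite constant.

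There is no real obstacle here. The only point needing care is monotonicity of $\lambda_n$, which follows from $0<\tau_1<\tau_2<\cdots$ as stated in the introduction. Even without monotonicity the argument still works: it is enough that $\lambda_n\ge\lambda_1$, i.e.\ that $\lambda_1$ is the minimum, and otherwise one replaces $\lambda_1$ by $\min_n\lambda_n$, which is attained because $\lambda_n\ge n^2\to\infty$.
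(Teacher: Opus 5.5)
Your proposal is correct and follows essentially the same route as the paper: factor out $\e^{-\lambda_1 t}$, use $\e^{-(\lambda_n-\lambda_1)t}\le \e^{-(\lambda_n-\lambda_1)}$ for $t\ge1$, and bound the resulting constant via $\lambda_n\ge n^2$. Your version also states explicitly that $\lambda_n\ge\lambda_1$, which follows from the monotonicity of $\tau_n$ and is needed for that inequality; the paper uses this step without comment.
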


\begin{proof}
Write
\[
\theta(t)=\e^{-\lambda_1 t}\sum_{n\ge 1}\e^{-(\lambda_n-\lambda_1)t}.
\]
For $t\ge 1$, $\e^{-(\lambda_n-\lambda_1)t}\le \e^{-(\lambda_n-\lambda_1)}$, hence
\[
\theta(t)\le \e^{-\lambda_1 t}\sum_{n\ge 1}\e^{-(\lambda_n-\lambda_1)}.
\]
The series on the right converges because $\lambda_n\ge n^2$ and thus
$\e^{-(\lambda_n-\lambda_1)}\le \e^{\lambda_1}\e^{-n^2}$.
Absorb the convergent sum into the constant $C$.
\end{proof}

\begin{proposition}[Mellin transform representation of $Z(s)$]\label{prop:mellin-Z}
For $\Ree s>\tfrac12$,
\begin{equation}\label{eq:mellin-Z}
Z(s)=\frac{1}{\Gamma(s)}\int_0^\infty t^{s-1}\theta(t)\,dt.
\end{equation}
\end{proposition}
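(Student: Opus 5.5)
The plan is to start from the Euler integral for the Gamma function and then exchange the sum and the integral by Tonelli's theorem. For each $n\ge1$ and $\Ree s>0$, the substitution $t\mapsto t/\lambda_n$ (legitimate because $\lambda_n>0$) gives
\[
\lambda_n^{-s}\,\Gamma(s)=\int_0^\infty t^{s-1}\e^{-\lambda_n t}\,dt .
\]
Summing this over $n$ and dividing by $\Gamma(s)$, which does not vanish, produces \eqref{eq:mellin-Z}, provided the interchange of $\sum_n$ and $\int_0^\infty$ is justified.

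To justify the interchange, set $\sigma:=\Ree s>\tfrac12$ and check absolute convergence of the double sum/integral
\[
\sum_{n\ge1}\int_0^\infty t^{\sigma-1}\e^{-\lambda_n t}\,dt=\Gamma(\sigma)\sum_{n\ge1}\lambda_n^{-\sigma}.
\]
By Lemma~\ref{lem:tau-lower} we have $\lambda_n\ge n^2$, so the right-hand side is at most $\Gamma(\sigma)\zeta(2\sigma)$, which is finite since $2\sigma>1$. Because all the terms are nonnegative, Tonelli's theorem (equivalently, Fubini applied to the integrand $|t^{s-1}|\e^{-\lambda_n t}$) allows the exchange. This gives $\int_0^\infty t^{s-1}\theta(t)\,dt=\Gamma(s)Z(s)$, and the integral on the left converges absolutely.

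As a consistency check, and because the same estimate will be needed later for the continuation argument, the convergence can also be read off directly from $\theta$. Near $0$, Corollary~\ref{cor:theta-basic} gives $\theta(t)=O(t^{-1/2})$, so $t^{\sigma-1}\theta(t)$ is integrable on $(0,1]$ when $\sigma>\tfrac12$. On $[1,\infty)$, Lemma~\ref{lem:theta-exp-decay} gives exponential decay. There is no real obstacle here; the only point requiring care is that the threshold $\Ree s>\tfrac12$ matches both the $t^{-1/2}$ singularity of $\theta$ and the convergence of $\sum_n n^{-2\sigma}$.
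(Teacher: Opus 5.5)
Your proposal is correct and follows essentially the same route as the paper: the Euler integral $\int_0^\infty t^{s-1}\e^{-\lambda_n t}\,dt=\Gamma(s)\lambda_n^{-s}$, summed over $n$, with the sum--integral interchange justified by absolute convergence coming from $\lambda_n\ge n^2$. Your bound $\Gamma(\sigma)\sum_n\lambda_n^{-\sigma}\le\Gamma(\sigma)\zeta(2\sigma)$ spells out the Tonelli step that the paper compresses into ``converges absolutely, so by Fubini.''
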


\begin{proof}
For $\Ree s>0$ and $\lambda>0$ one has
\[
\int_0^\infty t^{s-1}\e^{-\lambda t}\,dt=\lambda^{-s}\Gamma(s).
\]
For $\Ree s>\tfrac12$, the series defining $Z(s)$ converges absolutely, so by Fubini,
\[
\int_0^\infty t^{s-1}\theta(t)\,dt
=\sum_{n\ge 1}\int_0^\infty t^{s-1}\e^{-\lambda_n t}\,dt
=\Gamma(s)\sum_{n\ge 1}\lambda_n^{-s}=\Gamma(s)Z(s),
\]
which is \eqref{eq:mellin-Z}.
\end{proof}

\begin{theorem}[Meromorphic continuation of $Z(s)$ and residues]\label{thm:Z-meromorphic}
The function $Z(s)$ admits a meromorphic continuation to all of $\C$ with at most simple poles at
\[
s=\frac12-k,\qquad k=0,1,2,\dots.
\]
Moreover, if $c_{k-1/2}$ denotes the coefficient of $t^{k-1/2}$ in the heat expansion \eqref{eq:theta-N},
then
\begin{equation}\label{eq:Z-residue}
\operatorname{Res}_{s=1/2-k} Z(s)=\frac{c_{k-1/2}}{\Gamma(\tfrac12-k)}.
\end{equation}
In particular, $\operatorname{Res}_{s=1/2}Z(s)=\tfrac12$.
\end{theorem}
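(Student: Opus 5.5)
The plan is the standard Mellin splitting of \eqref{eq:mellin-Z}. For $\Ree s>\tfrac12$ write
\[
\Gamma(s)Z(s)=\int_0^1 t^{s-1}\theta(t)\,dt+\int_1^\infty t^{s-1}\theta(t)\,dt .
\]
By Lemma~\ref{lem:theta-exp-decay}, $\theta(t)\le C\e^{-\lambda_1 t}$ for $t\ge1$, where $\lambda_1>0$. Hence the second integral converges for every $s\in\C$ and defines an entire function.

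For the first integral, fix $N\ge0$ and use the expansion \eqref{eq:theta-N} from Theorem~\ref{thm:theta-asymp}:
\[
\theta(t)=\sum_{j\in J_N}c_j t^{j}+E_N(t),\qquad J_N:=\{-\tfrac12,0,\tfrac12,\dots,N\},\qquad E_N(t)=O(t^{N+1/2}) \text{ on } (0,1].
\]
Here $E_N$ is continuous on $(0,1]$, being a difference of continuous functions, and the $O$-bound holds uniformly on $(0,1]$. Integrating termwise gives
\[
\int_0^1 t^{s-1}\theta(t)\,dt=\sum_{j\in J_N}\frac{c_j}{s+j}+\int_0^1 t^{s-1}E_N(t)\,dt .
\]
The remainder integral is holomorphic in $\Ree s>-N-\tfrac12$. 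Therefore $\Gamma(s)Z(s)$ extends meromorphically to $\Ree s>-N-\tfrac12$, with at most simple poles at $s=-j$ for $j\in J_N$, and the residue there is $c_j$. Since $N$ is arbitrary and the continuations agree on overlaps, $\Gamma(s)Z(s)$ is meromorphic on all of $\C$.

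Next divide by $\Gamma(s)$. The function $1/\Gamma(s)$ is entire. It vanishes simply at $s=0,-1,-2,\dots$, which cancels the possible simple poles of $\Gamma Z$ at the nonpositive integers; these come from $j=0,1,2,\dots$. So $Z$ is holomorphic there. Its value at $s=-k$ is $(-1)^k k!\,c_k$, which is the special-value formula and is recorded as a byproduct.

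At $s=\tfrac12-k$, where $j=k-\tfrac12$ and $k\ge0$, $\Gamma$ is finite and nonzero. So $Z$ has at most a simple pole there, with
\[
\Res_{s=1/2-k}Z(s)=\frac{c_{k-1/2}}{\Gamma(\tfrac12-k)},
\]
which is \eqref{eq:Z-residue}. For $k=0$, Proposition~\ref{prop:first-heat-coeffs} gives $c_{-1/2}=\sqrt\pi/2$, so the residue is $\tfrac{\sqrt\pi/2}{\sqrt\pi}=\tfrac12$.

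The only step needing care is the termwise integration and the holomorphy of the remainder term. For this, $|t^{s-1}E_N(t)|\ll t^{\Ree s+N-1/2}$ on $(0,1]$, and this bound is integrable locally uniformly in $\Ree s>-N-\tfrac12$. Holomorphy then follows from Morera's theorem together with Fubini, or from differentiation under the integral sign. Everything else is bookkeeping.
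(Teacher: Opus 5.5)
Your proposal is correct and follows the paper's proof essentially step for step. You split the Mellin integral \eqref{eq:mellin-Z} at $t=1$, use Lemma~\ref{lem:theta-exp-decay} to make the tail entire, insert the truncated expansion \eqref{eq:theta-N} to get $\sum_j c_j/(s+j)$ plus a remainder holomorphic in $\Ree s>-N-\tfrac12$, and then divide by $\Gamma(s)$ to read off the residues at $s=\tfrac12-k$, where $\Gamma$ is finite and nonzero. Your extra remarks only make explicit details the paper leaves implicit: the uniformity of the $O(t^{N+1/2})$ bound on all of $(0,1]$ via continuity, and the holomorphy of the remainder integral by locally uniform domination.
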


\begin{proof}
Fix $N\in\N$ and write the expansion \eqref{eq:theta-N} as
\[
\theta(t)=\sum_{j\in\{-\tfrac12,0,\tfrac12,\dots,N\}} c_j t^{j} + R_N(t),
\qquad R_N(t)=O(t^{N+1/2})\quad(t\to 0^+).
\]
Split the Mellin integral \eqref{eq:mellin-Z} at $1$:
\[
Z(s)=\frac{1}{\Gamma(s)}\Bigl(\int_0^1 t^{s-1}\theta(t)\,dt+\int_1^\infty t^{s-1}\theta(t)\,dt\Bigr)
=: \frac{1}{\Gamma(s)}\bigl(I_0(s)+I_\infty(s)\bigr).
\]
By Lemma \ref{lem:theta-exp-decay}, $I_\infty(s)$ converges absolutely for every $s\in\C$ and defines an entire function.

For $I_0(s)$, substitute $\theta(t)=\sum c_j t^j + R_N(t)$:
\[
I_0(s)=\sum_{j\in\{-\tfrac12,0,\tfrac12,\dots,N\}} c_j\int_0^1 t^{s+j-1}\,dt + \int_0^1 t^{s-1}R_N(t)\,dt.
\]
For $\Ree(s+j)>0$, $\int_0^1 t^{s+j-1}\,dt=\frac{1}{s+j}$, hence
\[
I_0(s)=\sum_{j\in\{-\tfrac12,0,\tfrac12,\dots,N\}}\frac{c_j}{s+j}\;+\; H_N(s),
\]
where $H_N(s)$ is holomorphic in the half-plane $\Ree s>-N-\tfrac12$ because $R_N(t)=O(t^{N+1/2})$.
Thus $Z(s)$ extends meromorphically to $\Ree s>-N-\tfrac12$ with possible poles only at $s=-j$
for $j\in\{-\tfrac12,0,\tfrac12,\dots,N\}$.
Letting $N\to\infty$ yields a meromorphic continuation to all of $\C$ with possible poles at
$s=-j$ for $j\in \{-\tfrac12,\tfrac12,\tfrac32,\dots\}$, i.e.\ $s=\tfrac12-k$ for $k\ge 0$.

To compute residues, note that $\Gamma(s)$ is holomorphic and nonzero at $s=\tfrac12-k$.
From the decomposition above, near $s=\tfrac12-k$ the only singular term in $I_0(s)$ is
$\frac{c_{k-1/2}}{s-(1/2-k)}$, so \eqref{eq:Z-residue} follows.

Finally, by Proposition \ref{prop:first-heat-coeffs}, $c_{-1/2}=\sqrt{\pi}/2$, hence
$\operatorname{Res}_{s=1/2}Z(s)=c_{-1/2}/\Gamma(1/2)=(\sqrt{\pi}/2)/\sqrt{\pi}=1/2$.
\end{proof}

\begin{remark}[Cancellation at nonpositive integers]\label{rem:gamma-cancellation}
In the Mellin representation
\[
Z(s)=\frac{1}{\Gamma(s)}\int_0^\infty t^{s-1}\theta(t)\,dt,
\]
the truncated small-$t$ expansion of $\theta(t)$ produces apparent simple poles of the integral
at points $s=-j$ coming from factors $(s+j)^{-1}$. When $j\in\mathbb{Z}_{\ge0}$, this singularity
is canceled by the simple zero of $1/\Gamma(s)$ at $s=-j$ (since $\Gamma$ has a simple pole at
each nonpositive integer). Hence $Z(s)$ is holomorphic at $s=0,-1,-2,\dots$.
\end{remark}

\begin{theorem}[Special values at nonpositive integers]\label{thm:Z-negative}
For every integer $m\ge 0$, the meromorphic continuation of $Z(s)$ is holomorphic at $s=-m$ and satisfies
\begin{equation}\label{eq:Z-negative}
Z(-m)=(-1)^m m!\,c_m,
\end{equation}
where $c_m$ is the coefficient of $t^m$ in the small-time expansion \eqref{eq:theta-N}.
In particular, $Z(0)=c_0=-1$.
\end{theorem}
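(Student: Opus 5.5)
We reuse the decomposition from the proof of Theorem~\ref{thm:Z-meromorphic}. Fix $m\ge0$ and choose $N\ge m+1$. For $\Ree s>\tfrac12$ we write
\[
\Gamma(s)Z(s)=I_\infty(s)+\sum_{j\in\{-\frac12,0,\frac12,\dots,N\}}\frac{c_j}{s+j}+H_N(s).
\]
Here $I_\infty$ is entire by Lemma~\ref{lem:theta-exp-decay}. The function $H_N(s)=\int_0^1 t^{s-1}R_N(t)\,dt$ is holomorphic for $\Ree s>-N-\tfrac12$. Both the sum and $H_N$ are meromorphic there, so this identity gives the continuation of $\Gamma(s)Z(s)$ to that half-plane. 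Since $-m$ lies in this half-plane, we can read off the local behaviour of $Z$ at $s=-m$ by dividing by $\Gamma(s)$.

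Near $s=-m$, every term on the right is holomorphic except $c_m/(s+m)$. Indeed, $I_\infty$ and $H_N$ are holomorphic at $-m$. The other terms $c_j/(s+j)$ with $j\neq m$ have their poles at $-j\neq -m$. So
\[
\Gamma(s)Z(s)=\frac{c_m}{s+m}+E(s),
\]
where $E$ is holomorphic near $-m$.

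Next we use the local behaviour of the Gamma function. It has a simple pole at $-m$ with residue $(-1)^m/m!$. Hence $1/\Gamma(s)$ is holomorphic at $-m$ with a simple zero, and
\[
\frac{1}{\Gamma(s)}=(-1)^m m!\,(s+m)+O\bigl((s+m)^2\bigr).
\]
Multiplying the previous expansion by this, the term $E(s)/\Gamma(s)$ vanishes at $-m$. The term $c_m/\bigl((s+m)\Gamma(s)\bigr)$ tends to $(-1)^m m!\,c_m$. This shows $Z$ is holomorphic at $-m$, as in Remark~\ref{rem:gamma-cancellation}, and gives \eqref{eq:Z-negative}.

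For the particular case $m=0$, the formula gives $Z(0)=c_0$. Proposition~\ref{prop:first-heat-coeffs} gives $c_0=-1$, so $Z(0)=-1$.

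There is no real obstacle here. The only point needing care is that the continuation obtained for different $N$ is consistent, which follows from uniqueness of analytic continuation. One should also note that $c_m$ is the same coefficient in \eqref{eq:theta-N} for every $N\ge m$, by uniqueness of asymptotic expansions.
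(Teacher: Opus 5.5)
Your proposal is correct and follows essentially the same argument as the paper: you reuse the truncated Mellin decomposition from the proof of Theorem~\ref{thm:Z-meromorphic}, isolate $c_m/(s+m)$ as the only singular term near $s=-m$, and cancel it against the simple zero of $1/\Gamma(s)$, using $1/\Gamma(s)=(-1)^m m!\,(s+m)+O((s+m)^2)$. Taking $N\ge m$ would already suffice, and your citation of Proposition~\ref{prop:first-heat-coeffs} for $c_0=-1$ makes explicit a step the paper leaves implicit.
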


\begin{proof}
Fix $m\ge 0$ and choose $N\ge m$ in the decomposition in the proof of Theorem \ref{thm:Z-meromorphic}.
Then near $s=-m$ we have
\[
Z(s)=\frac{1}{\Gamma(s)}\Bigl(\frac{c_m}{s+m} + A(s)\Bigr),
\]
where $A(s)$ is holomorphic at $s=-m$ (it contains the remaining $\frac{c_j}{s+j}$ terms with $j\ne m$,
the function $H_N(s)$, and the entire tail $I_\infty(s)$).
Since $\Gamma(s)$ has a simple pole at $s=-m$ with residue $\operatorname{Res}_{s=-m}\Gamma(s)=\frac{(-1)^m}{m!}$,
it follows that $1/\Gamma(s)$ has a simple zero at $s=-m$ and admits the expansion
\[
\frac{1}{\Gamma(s)} = (-1)^m m!\,(s+m) + O\bigl((s+m)^2\bigr).
\]
Therefore,
\[
Z(-m)=\lim_{s\to -m}\frac{1}{\Gamma(s)}\cdot \frac{c_m}{s+m}=(-1)^m m! c_m,
\]
and the remaining holomorphic term $A(s)$ vanishes in the limit because $1/\Gamma(s)\to 0$.
This proves \eqref{eq:Z-negative}.
\end{proof}

\subsection{Resolvent trace and the canonical product $W(\lambda)$}

Define the canonical genus-$0$ product
\begin{equation}\label{eq:def-W}
W(\lambda):=\prod_{n\ge 1}\Bigl(1+\frac{\lambda}{\lambda_n}\Bigr).
\end{equation}
Since $\sum_{n\ge 1}\lambda_n^{-1}<\infty$ (by $\lambda_n\ge n^2$), the product converges
absolutely and locally uniformly, hence $W$ is entire.

\begin{proposition}[Logarithmic derivative of $W$]\label{prop:log-derivative-W}
For every $\lambda\in\C\setminus\{-\lambda_n:\ n\ge 1\}$,
\begin{equation}\label{eq:Wprime-over-W}
\frac{W'(\lambda)}{W(\lambda)}=\sum_{n\ge 1}\frac{1}{\lambda+\lambda_n},
\end{equation}
where the series converges locally uniformly in $\lambda$.
\end{proposition}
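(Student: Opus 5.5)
The plan is to differentiate the product termwise, which is legitimate because $W$ is a locally uniformly convergent product of entire functions. Fix a compact set $K\subset\C\setminus\{-\lambda_n:\ n\ge1\}$. Since $\lambda_n\ge n^2$ (Lemma~\ref{lem:tau-lower}), there is an $N$ such that $\lambda_n\ge 2\sup_{\lambda\in K}|\lambda|$ for all $n>N$. For these $n$ and $\lambda\in K$ we then have $|\lambda+\lambda_n|\ge \lambda_n/2$. Hence
\[
\sum_{n>N}\sup_{\lambda\in K}\frac{1}{|\lambda+\lambda_n|}\le \sum_{n>N}\frac{2}{\lambda_n}\le 2\sum_{n\ge1}\frac1{n^2}<\infty .
\]
The finitely many terms with $n\le N$ are continuous on $K$, because $K$ avoids the points $-\lambda_n$. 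By the Weierstrass $M$-test, the right-hand side of \eqref{eq:Wprime-over-W} therefore converges absolutely and uniformly on $K$.

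To identify this sum with $W'/W$, split $W(\lambda)=P_N(\lambda)\,W_N(\lambda)$, where $P_N(\lambda)=\prod_{n\le N}(1+\lambda/\lambda_n)$.

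\begin{enumerate}
\item The finite product $P_N$ is handled by the product rule, which gives $P_N'/P_N=\sum_{n\le N}(\lambda+\lambda_n)^{-1}$ on $K$.
\item For the tail, note that $|\lambda/\lambda_n|\le\tfrac12$ on $K$ when $n>N$. The principal logarithm $\log(1+\lambda/\lambda_n)$ is therefore holomorphic on a neighborhood of $K$ and bounded by $2|\lambda|/\lambda_n$.
\item It follows that $L_N(\lambda):=\sum_{n>N}\log(1+\lambda/\lambda_n)$ converges uniformly on $K$ to a holomorphic function with $W_N=\exp(L_N)$.
\item By Weierstrass's theorem on uniformly convergent series of holomorphic functions, $L_N'=\sum_{n>N}(\lambda+\lambda_n)^{-1}$, and hence $W_N'/W_N=L_N'$.
\end{enumerate}

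Combining the two parts gives $W'/W=P_N'/P_N+W_N'/W_N$, which is \eqref{eq:Wprime-over-W} on $K$. Since $K$ was arbitrary, the identity holds on all of $\C\setminus\{-\lambda_n\}$.

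The only step that needs care is that $K$ may be larger than the disc $|\lambda|<\lambda_1$, so the logarithm of the full product need not be defined there. This is why the finitely many factors $n\le N$ are separated off and treated by the product rule, while logarithms are taken only for the tail factors. The points $-\lambda_n$ are zeros of $W$, and distinct $n$ give distinct zeros because $\lambda_n$ is strictly increasing. So $W\neq0$ on the domain, and the quotient $W'/W$ is well defined there.
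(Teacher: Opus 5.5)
Your proof is correct and follows essentially the same route as the paper: a lower bound $|\lambda+\lambda_n|\gtrsim\lambda_n$ on compact sets, the $M$-test, and termwise differentiation of $\sum_n\log(1+\lambda/\lambda_n)$. Splitting off the first $N$ factors and handling them by the product rule tidies up the paper's version, which takes logarithms of all factors on $K$ even though these need not be single-valued when $K$ meets $(-\infty,-\lambda_1]$.
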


\begin{proof}
Fix a compact set $K\subset \C\setminus\{-\lambda_n\}$.
Then there exists $\delta>0$ such that $|\lambda+\lambda_n|\ge \delta\,\lambda_n$ for all $\lambda\in K$ and all $n$.
Consequently,
\[
\sum_{n\ge 1}\sup_{\lambda\in K}\Bigl|\frac{1}{\lambda+\lambda_n}\Bigr|
\le \delta^{-1}\sum_{n\ge 1}\lambda_n^{-1}<\infty.
\]
Thus $\sum_{n\ge 1}\log(1+\lambda/\lambda_n)$ converges uniformly on $K$ and may be differentiated termwise,
yielding \eqref{eq:Wprime-over-W}.
\end{proof}

Define the resolvent trace
\[
R(\lambda):=\sum_{n\ge 1}\frac{1}{\lambda+\lambda_n}.
\]
For $\Ree\lambda>0$ it admits a Laplace transform representation.

\begin{lemma}[Laplace representation]\label{lem:laplace-resolvent}
For $\Ree\lambda>0$,
\begin{equation}\label{eq:laplace-resolvent}
R(\lambda)=\int_0^\infty \e^{-\lambda t}\theta(t)\,dt.
\end{equation}
\end{lemma}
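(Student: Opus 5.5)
The plan is to start from the elementary Laplace identity
\[
\frac{1}{\lambda+\lambda_n}=\int_0^\infty \e^{-(\lambda+\lambda_n)t}\,dt,
\]
which holds for each $n$ because $\Ree(\lambda+\lambda_n)>\lambda_n>0$ whenever $\Ree\lambda>0$. Summing over $n\ge1$ gives
\[
R(\lambda)=\sum_{n\ge1}\int_0^\infty \e^{-\lambda t}\e^{-\lambda_n t}\,dt,
\]
and the only task is to move the sum inside the integral. This will produce $\int_0^\infty \e^{-\lambda t}\theta(t)\,dt$.

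To justify the interchange, I will use Tonelli/Fubini on the absolute values. I need
\[
\sum_{n\ge1}\int_0^\infty \e^{-(\Ree\lambda)t}\e^{-\lambda_n t}\,dt=\int_0^\infty \e^{-(\Ree\lambda)t}\theta(t)\,dt<\infty.
\]
I will split the integral at $t=1$. On $(0,1]$, Corollary~\ref{cor:theta-basic} gives $\theta(t)=O(t^{-1/2})$, which is integrable near $0$. On $[1,\infty)$, Lemma~\ref{lem:theta-exp-decay} gives $\theta(t)\le C\e^{-\lambda_1 t}$, which is integrable, and $\Ree\lambda>0$ only helps.

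As a cross-check, the left side is also directly finite: $\sum_n 1/(\Ree\lambda+\lambda_n)\le\sum_n\lambda_n^{-1}<\infty$ by Lemma~\ref{lem:tau-lower}. With the interchange justified, Fubini yields \eqref{eq:laplace-resolvent}.

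There is no real obstacle here. The only point needing care is integrability of $\theta$ at $t=0$, and that is exactly the $t^{-1/2}$ bound from Corollary~\ref{cor:theta-basic}.
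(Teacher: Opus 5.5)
Your proposal is correct and follows the paper's proof: the termwise identity $\int_0^\infty \e^{-(\lambda+\lambda_n)t}\,dt=(\lambda+\lambda_n)^{-1}$, followed by Fubini to move the sum inside the integral. The only difference is cosmetic. You certify absolute integrability on the integral side, via $\theta(t)=O(t^{-1/2})$ near $0$ and the exponential decay for $t\ge1$. The paper uses the sum side, $\sum_n\lambda_n^{-1}<\infty$. By Tonelli these are the same quantity $\sum_n(\Ree\lambda+\lambda_n)^{-1}$, and your use of $\Ree\lambda$ rather than $|\lambda+\lambda_n|$ gives the exact dominating quantity.
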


\begin{proof}
For $\Ree\lambda>0$ and $\lambda_n>0$,
\[
\int_0^\infty \e^{-(\lambda+\lambda_n)t}\,dt=\frac{1}{\lambda+\lambda_n}.
\]
Moreover, for each fixed $\lambda$ with $\Ree\lambda>0$, the series
$\sum_{n\ge 1} \frac{1}{|\lambda+\lambda_n|}$ converges because it is dominated by a constant multiple of
$\sum \lambda_n^{-1}$. Hence by Fubini,
\[
\int_0^\infty \e^{-\lambda t}\theta(t)\,dt
=\sum_{n\ge 1}\int_0^\infty \e^{-(\lambda+\lambda_n)t}\,dt
=\sum_{n\ge 1}\frac{1}{\lambda+\lambda_n}=R(\lambda).
\]
\end{proof}

We now derive large-$\lambda$ asymptotics for $R(\lambda)$ and $\log W(\lambda)$ from the small-time heat expansion. This is a classical Watson-lemma mechanism, we include a proof tailored to our situation.

\begin{remark}[Branch conventions in sectors]\label{rem:sector-branches}
Fix $\delta\in(0,\tfrac{\pi}{2})$ and set
\[
\Sigma_\delta:=\{\lambda\in\mathbb{C}:\ |\arg\lambda|\le \tfrac{\pi}{2}-\delta\}.
\]
On $\Sigma_\delta$ we use the principal branch of $\log\lambda$ (so $\arg\lambda\in(-\pi,\pi)$)
and define $\lambda^\alpha:=\exp(\alpha\log\lambda)$. Since $\Sigma_\delta$ is simply connected and
contained in the open right half-plane, these branches are single-valued and holomorphic on
$\Sigma_\delta$. In particular, the straight segment from $1$ to $\lambda$ lies in $\Sigma_\delta$,
so term-by-term integration of asymptotic series in $\lambda$ along that segment is legitimate.
\end{remark}

\begin{lemma}[Watson-type lemma for Laplace transforms]\label{lem:watson}
Let $f:(0,\infty)\to\C$ be measurable and assume:
\begin{enumerate}[label=\textup{(\alph*)},leftmargin=2.5em]
\item as $t\to 0^+$,
\[
f(t)=\sum_{\nu=0}^{M} a_\nu t^{\alpha_\nu}+O(t^{\alpha_{M+1}}),
\qquad -1<\alpha_0<\alpha_1<\cdots<\alpha_{M+1};
\]
\item there exist $c,C>0$ such that $|f(t)|\le C\e^{-c t}$ for all $t\ge 1$.
\end{enumerate}
Then for every $\delta\in(0,\tfrac{\pi}{2})$, as $|\lambda|\to\infty$ with $|\arg\lambda|\le \tfrac{\pi}{2}-\delta$,
\[
\int_0^\infty \e^{-\lambda t}f(t)\,dt
=\sum_{\nu=0}^{M} a_\nu \Gamma(\alpha_\nu+1)\lambda^{-\alpha_\nu-1}
+O(|\lambda|^{-\alpha_{M+1}-1}).
\]
\end{lemma}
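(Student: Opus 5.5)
The plan is the standard Watson argument: split the Laplace integral at $t=1$, treat the tail as negligible, and integrate the truncated small-time expansion term by term. Write $\lambda=|\lambda|e^{i\phi}$ with $|\phi|\le\frac{\pi}{2}-\delta$. Then $\Ree\lambda\ge|\lambda|\sin\delta$, so $|\e^{-\lambda t}|=\e^{-t\Ree\lambda}\le \e^{-|\lambda| t\sin\delta}$. This single inequality drives every estimate below.

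First I split
\[
\int_0^\infty \e^{-\lambda t}f(t)\,dt=\int_0^1+\int_1^\infty .
\]
By hypothesis (b), the tail satisfies
\[
\Bigl|\int_1^\infty\Bigr|\le C\int_1^\infty \e^{-(c+|\lambda|\sin\delta)t}\,dt\le C' \e^{-|\lambda|\sin\delta}.
\]
This is $O(|\lambda|^{-N})$ for every $N$, so it is negligible.

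On $(0,1]$ I write $f(t)=\sum_{\nu\le M}a_\nu t^{\alpha_\nu}+r(t)$. Hypothesis (a) gives $|r(t)|\le C_1 t^{\alpha_{M+1}}$ for small $t$. To extend this to all of $(0,1]$, I need $f$ locally integrable or bounded on compact subsets of $(0,1]$, and I will tacitly add that assumption. (For the application, $f=\theta$ is continuous on $(0,\infty)$.) Since $\alpha_{M+1}>-1$ and $t^{\alpha_{M+1}}$ is bounded below on $[t_0,1]$, the bound $|r(t)|\le C_1t^{\alpha_{M+1}}$ then holds on all of $(0,1]$ after enlarging $C_1$.

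For the main terms, I use the exact formula
\[
\int_0^\infty \e^{-\lambda t}t^{\alpha}\,dt=\Gamma(\alpha+1)\lambda^{-\alpha-1}, \qquad \alpha>-1,\ \Ree\lambda>0,
\]
with the principal branch. This is proved by rotating the contour $t\mapsto \lambda t$, or by analytic continuation in $\lambda$ from the positive reals (both sides are holomorphic in $\Ree\lambda>0$), consistent with Remark \ref{rem:sector-branches}. Writing $\int_0^1=\int_0^\infty-\int_1^\infty$, the piece $\int_1^\infty \e^{-\lambda t}t^{\alpha_\nu}\,dt$ is bounded by $\int_1^\infty t^{\alpha_\nu}\e^{-|\lambda| t\sin\delta}\,dt$. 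This is exponentially small in $|\lambda|$ for $|\lambda|\ge1$, e.g.\ by $t^{\alpha_\nu}\le C\e^{t\sin\delta/2}$ once $|\lambda|\ge 1$.

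For the remainder, the substitution $u=|\lambda| t\sin\delta$ gives
\[
\Bigl|\int_0^1 \e^{-\lambda t}r(t)\,dt\Bigr|\le C_1\int_0^\infty t^{\alpha_{M+1}}\e^{-|\lambda|t\sin\delta}\,dt=C_1\Gamma(\alpha_{M+1}+1)(\sin\delta)^{-\alpha_{M+1}-1}|\lambda|^{-\alpha_{M+1}-1}.
\]
This is uniform in the sector. Collecting the main terms, the exponentially small tails and this remainder gives the claimed expansion.

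The only delicate points are the two side issues above: justifying the rotated Gamma integral with the correct branch of $\lambda^{-\alpha-1}$, and making the remainder bound global on $(0,1]$. Both are routine. I therefore expect no real obstacle beyond checking that every constant depends only on $\delta$, $M$ and $f$, and not on $\arg\lambda$.
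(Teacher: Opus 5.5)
Your proposal is correct and follows the paper's proof essentially step for step. You split at $t=1$, kill the tail using hypothesis (b) and $\Ree\lambda\ge|\lambda|\sin\delta$, bound the remainder by $\Gamma(\alpha_{M+1}+1)(|\lambda|\sin\delta)^{-\alpha_{M+1}-1}$, and recover the main terms from the complete Gamma integral up to exponentially small errors; the paper writes this last step as $\lambda^{-\alpha-1}\int_0^{\lambda}\e^{-u}u^{\alpha}\,du$, you as $\int_0^\infty-\int_1^\infty$, which is the same thing.

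You also observe that hypothesis (a) only controls the remainder near $0$, so local boundedness of $f$ on $[t_0,1]$ is needed to make the bound global on $(0,1]$. The paper uses this tacitly, and it is harmless for the continuous $f=\theta$. With mere local integrability you would instead bound $\int_{t_0}^1$ by $\e^{-t_0|\lambda|\sin\delta}\int_{t_0}^1|r|$, rather than enlarging $C_1$.
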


\begin{proof}
Fix $\delta\in(0,\tfrac{\pi}{2})$ and set $\Sigma_\delta:=\{\lambda:\ |\arg\lambda|\le \tfrac{\pi}{2}-\delta\}$.
For $\lambda\in\Sigma_\delta$, $\Ree\lambda\ge |\lambda|\sin\delta$.

Split the Laplace integral at $1$:
\[
\int_0^\infty \e^{-\lambda t}f(t)\,dt=\int_0^1 \e^{-\lambda t}f(t)\,dt+\int_1^\infty \e^{-\lambda t}f(t)\,dt=:I_0(\lambda)+I_\infty(\lambda).
\]
By assumption (b), for $\lambda\in\Sigma_\delta$,
\[
|I_\infty(\lambda)|\le \int_1^\infty \e^{-\Ree\lambda\, t}\,C\e^{-ct}\,dt
\le C\int_1^\infty \e^{-(|\lambda|\sin\delta+c)t}\,dt
\ll_\delta \e^{-|\lambda|\sin\delta},
\]
hence $I_\infty(\lambda)$ is exponentially small and in particular $O(|\lambda|^{-N})$ for every $N$.

For $I_0(\lambda)$, substitute the expansion from (a):
\[
I_0(\lambda)=\sum_{\nu=0}^{M} a_\nu \int_0^1 \e^{-\lambda t}t^{\alpha_\nu}\,dt + \int_0^1 \e^{-\lambda t}O(t^{\alpha_{M+1}})\,dt.
\]
The remainder integral satisfies
\begin{align*}
    \Bigl|\int_0^1 \e^{-\lambda t}O(t^{\alpha_{M+1}})\,dt\Bigr|
&\ll \int_0^1 \e^{-\Ree\lambda\, t} t^{\alpha_{M+1}}\,dt\\
&\le \int_0^\infty \e^{-(|\lambda|\sin\delta) t} t^{\alpha_{M+1}}\,dt\\
&=\Gamma(\alpha_{M+1}+1)\,(|\lambda|\sin\delta)^{-\alpha_{M+1}-1},
\end{align*}

so it is $O_\delta(|\lambda|^{-\alpha_{M+1}-1})$.

Finally, for each $\alpha>-1$ and $\lambda\in\Sigma_\delta$,
\[
\int_0^1 \e^{-\lambda t}t^{\alpha}\,dt
=\lambda^{-\alpha-1}\int_0^{\lambda} \e^{-u}u^{\alpha}\,du
=\Gamma(\alpha+1)\lambda^{-\alpha-1}+O_\delta(\e^{-|\lambda|\sin\delta}),
\]
because $\int_\lambda^\infty \e^{-u}u^{\alpha}\,du=O(\e^{-\Ree\lambda})$.
Combining the above completes the proof.
\end{proof}

\begin{theorem}[Asymptotics of the resolvent trace]\label{thm:resolvent-asymp}
Fix $\delta\in(0,\tfrac{\pi}{2})$.
As $|\lambda|\to\infty$ with $|\arg\lambda|\le \tfrac{\pi}{2}-\delta$,
\begin{equation}\label{eq:resolvent-asymp}
R(\lambda)=\sum_{j\in\{-\tfrac12,0,\tfrac12,\dots,N\}} c_j \Gamma(j+1)\lambda^{-j-1}+O(|\lambda|^{-N-3/2})
\end{equation}
for every $N\in\N$.
In particular the leading term is $R(\lambda)\sim \frac{\pi}{2}\lambda^{-1/2}$.
\end{theorem}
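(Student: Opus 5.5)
The proof applies the Watson-type Lemma~\ref{lem:watson} to $f=\theta$ inside the Laplace representation of Lemma~\ref{lem:laplace-resolvent}. We check its two hypotheses in turn.

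\emph{Hypothesis (b).} This is immediate from Lemma~\ref{lem:theta-exp-decay}: for $t\ge1$ we have $\theta(t)\le C\e^{-\lambda_1 t}$, so we may take $c=\lambda_1>0$.

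\emph{Hypothesis (a).} Fix $N\in\N$ and apply Theorem~\ref{thm:theta-asymp} with $N$. This gives
\[
\theta(t)=\sum_{j\in\{-\frac12,0,\frac12,\dots,N\}}c_j t^j+O(t^{N+1/2}).
\]
Relabel the exponents as $\alpha_0=-\tfrac12<\alpha_1=0<\cdots<\alpha_M=N$ and set $\alpha_{M+1}=N+\tfrac12$. Since $-\tfrac12>-1$, the exponents satisfy $-1<\alpha_0<\alpha_1<\cdots<\alpha_{M+1}$, as the lemma requires.

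\emph{Conclusion.} For $\Ree\lambda>0$, Lemma~\ref{lem:laplace-resolvent} gives $R(\lambda)=\int_0^\infty \e^{-\lambda t}\theta(t)\,dt$, and this covers the sector $\Sigma_\delta$. Lemma~\ref{lem:watson} then yields, as $|\lambda|\to\infty$ in $\Sigma_\delta$,
\[
R(\lambda)=\sum_j c_j\Gamma(j+1)\lambda^{-j-1}+O\bigl(|\lambda|^{-N-3/2}\bigr).
\]
The error exponent comes from $\alpha_{M+1}+1=N+\tfrac32$, and the powers $\lambda^{-j-1}$ use the branch fixed in Remark~\ref{rem:sector-branches}. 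This is exactly \eqref{eq:resolvent-asymp}.

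\emph{Leading term.} The first term is $j=-\tfrac12$. By Proposition~\ref{prop:first-heat-coeffs}, $c_{-1/2}=\tfrac{\sqrt\pi}{2}$, so this term equals
\[
\frac{\sqrt\pi}{2}\,\Gamma\bigl(\tfrac12\bigr)\lambda^{-1/2}=\frac{\pi}{2}\lambda^{-1/2}.
\]
Every later term is $O(|\lambda|^{-1})$, so $R(\lambda)\sim\frac{\pi}{2}\lambda^{-1/2}$.

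The argument is routine bookkeeping. The only point needing care is that Lemma~\ref{lem:watson} is stated with the remainder $O(t^{\alpha_{M+1}})$ valid on all of $(0,1]$, not just as $t\to0^+$. Theorem~\ref{thm:theta-asymp} gives the remainder only asymptotically, say on $(0,t_0]$. On $[t_0,1]$ the remainder is a bounded continuous function, because $\theta$ is continuous there and the finitely many terms $c_jt^j$ are bounded. Hence it is still $\le C' t^{\alpha_{M+1}}$ on $[t_0,1]$, and the bound extends to all of $(0,1]$.
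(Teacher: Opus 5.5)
Your proposal is correct and follows the paper's proof exactly. You apply Lemma~\ref{lem:watson} to $f=\theta$, take hypothesis (a) from Theorem~\ref{thm:theta-asymp} and hypothesis (b) from Lemma~\ref{lem:theta-exp-decay}, use the Laplace representation \eqref{eq:laplace-resolvent}, and get $c_{-1/2}\Gamma(\tfrac12)=\tfrac{\pi}{2}$ from Proposition~\ref{prop:first-heat-coeffs}; your extra step, extending the remainder bound from $(0,t_0]$ to all of $(0,1]$ by continuity of $\theta$, fills in a detail the paper leaves implicit when it uses Lemma~\ref{lem:watson}.
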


\begin{proof}
Apply Lemma \ref{lem:watson} to $f(t)=\theta(t)$, using Theorem \ref{thm:theta-asymp} for condition (a)
and Lemma \ref{lem:theta-exp-decay} for condition (b), and then use \eqref{eq:laplace-resolvent}.
The leading coefficient follows from Proposition \ref{prop:first-heat-coeffs}:
$c_{-1/2}\Gamma(1/2)=\frac{\sqrt{\pi}}{2}\cdot \sqrt{\pi}=\frac{\pi}{2}$.
\end{proof}

We integrate the resolvent expansion to obtain the large-$\lambda$ expansion of $\log W(\lambda)$.
Fix $\delta\in(0,\tfrac{\pi}{2})$ and define a branch of $\log W(\lambda)$ on $\Sigma_\delta$ by analytic continuation
from $\lambda=0$ along any path in $\Sigma_\delta$.

\begin{theorem}[Asymptotics of the canonical product]\label{thm:logW-asymp}
Fix $\delta\in(0,\tfrac{\pi}{2})$ and let $\Sigma_\delta=\{\lambda:\ |\arg\lambda|\le \tfrac{\pi}{2}-\delta\}$.
There exists a constant $C_W\in\C$ (depending on the chosen branch of $\log W$) such that, as $|\lambda|\to\infty$
with $\lambda\in\Sigma_\delta$,
\begin{equation}\label{eq:logW-asymp}
\log W(\lambda)
=
\pi \lambda^{1/2} + c_0\log\lambda + C_W
-\sum_{\substack{j\in\{\tfrac12,1,\tfrac32,\dots\}\\ j\le N}}\frac{c_j\Gamma(j+1)}{j}\,\lambda^{-j}
\;+\; O(|\lambda|^{-N-1/2}),
\end{equation}
for every $N\in\N$, where $\lambda^{1/2}$ and $\log\lambda$ are taken with their principal branches on $\Sigma_\delta$.
In particular, for every integer $m\ge 1$, the coefficient of $\lambda^{-m}$ in the asymptotic expansion of $\log W(\lambda)$ equals
\begin{equation}\label{eq:logW-integer-coeff}
[\lambda^{-m}]\,\log W(\lambda)=\frac{(-1)^{m+1}}{m}\,Z(-m).
\end{equation}
\end{theorem}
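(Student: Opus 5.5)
The plan is to integrate the resolvent expansion of Theorem~\ref{thm:resolvent-asymp}, using $(\log W)'=R$ from Proposition~\ref{prop:log-derivative-W}, and then to read off the integer coefficients via Theorem~\ref{thm:Z-negative}.

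\textbf{Step 1: the integrated expansion.} Fix $N$ and write
\[
R(\lambda)=\frac{\pi}{2}\lambda^{-1/2}+c_0\lambda^{-1}+\sum_{j\in\{\frac12,1,\dots\},\,j\le N}c_j\Gamma(j+1)\lambda^{-j-1}+E_N(\lambda),
\]
with $E_N(\lambda)=O(|\lambda|^{-N-3/2})$ uniformly on $\Sigma_\delta\cap\{|\lambda|\ge1\}$. Here $c_{-1/2}\Gamma(1/2)=\pi/2$ by Proposition~\ref{prop:first-heat-coeffs}. Integrate along the straight segment from $1$ to $\lambda$, which lies in $\Sigma_\delta$ by Remark~\ref{rem:sector-branches}. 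The antiderivatives are $\pi\lambda^{1/2}$, then $c_0\log\lambda$, then $-\frac{c_j\Gamma(j+1)}{j}\lambda^{-j}$ for $j>0$.

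\textbf{Step 2: the remainder.} Split $\int_1^\lambda E_N=\int_1^\infty E_N-\int_\lambda^\infty E_N$, taking both tails along rays to infinity inside $\Sigma_\delta$. The first tail is a constant, which we absorb into $C_W$ together with $\log W(1)$ and the constant terms produced at $\lambda=1$. The second tail is $O(|\lambda|^{-N-1/2})$, since $E_N$ decays like $|\lambda|^{-N-3/2}$. We also need $\int_\lambda^\infty E_N$ to be independent of the path. This follows from Cauchy's theorem, because $E_N$ is holomorphic on the sector minus a compact set and decays integrably. This path-independence, i.e.\ uniformity of the Watson remainder in the sector, is the main technical point. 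Lemma~\ref{lem:watson} already provides the required uniformity in $\arg\lambda$, so the obstacle is mild.

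\textbf{Step 3: the integer coefficients.} For $j=m\ge1$ an integer, the coefficient of $\lambda^{-m}$ is $-c_m\Gamma(m+1)/m=-c_m\,m!/m$. By Theorem~\ref{thm:Z-negative}, $c_m=(-1)^mZ(-m)/m!$, so the coefficient equals $-(-1)^mZ(-m)/m=(-1)^{m+1}Z(-m)/m$. This is \eqref{eq:logW-integer-coeff}.

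Since asymptotic coefficients are unique, this identification does not depend on $N$ or on the choice of branch; the branch affects only $C_W$. As a check, $c_0\log\lambda=-\log\lambda$, consistent with $Z(0)=-1$.
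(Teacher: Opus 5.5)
Your proposal is correct and is essentially the paper's proof: you integrate the sectorial resolvent expansion of Theorem~\ref{thm:resolvent-asymp} along the segment from $1$ to $\lambda$ and convert the integer-order coefficients using Theorem~\ref{thm:Z-negative}. Your splitting $\int_1^\lambda E_N=\int_1^\infty E_N-\int_\lambda^\infty E_N$ handles the remainder more carefully than the paper's one-line claim, because it shows explicitly that the integrated error is a constant plus $O(|\lambda|^{-N-1/2})$; that constant is absorbed into $C_W$, and it is independent of $N$ by uniqueness of asymptotic expansions.
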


\begin{proof}
By Proposition \ref{prop:log-derivative-W}, $d(\log W)/d\lambda=R(\lambda)$ on $\Sigma_\delta$.
Integrate $R(\lambda)$ along the straight segment from $1$ to $\lambda$.
Using \eqref{eq:resolvent-asymp}, term-by-term integration gives:
\begin{itemize}[leftmargin=2.5em]
\item the $j=-\tfrac12$ term integrates to $2c_{-1/2}\Gamma(1/2)\lambda^{1/2}=\pi\lambda^{1/2}$;
\item the $j=0$ term integrates to $c_0\log\lambda$;
\item each $j>0$ term integrates to $-\frac{c_j\Gamma(j+1)}{j}\lambda^{-j}$.
\end{itemize}
The integrated remainder $O(|\lambda|^{-N-3/2})$ contributes $O(|\lambda|^{-N-1/2})$.
All contributions from the lower limit $1$ and from the choice of branch are absorbed into the constant $C_W$.
This proves \eqref{eq:logW-asymp}.

For \eqref{eq:logW-integer-coeff}, take $j=m\in\N$ in \eqref{eq:logW-asymp}:
the coefficient of $\lambda^{-m}$ equals $-\frac{c_m\Gamma(m+1)}{m}=-\frac{m!}{m}c_m$.
By Theorem \ref{thm:Z-negative}, $c_m=(-1)^m Z(-m)/m!$, hence the coefficient equals
$\frac{(-1)^{m+1}}{m}Z(-m)$.
\end{proof}

Recall that
\[
\varphi(w)=\prod_{n\ge 1}\Bigl(1-\frac{w^2}{\tau_n^2}\Bigr)=\prod_{n\ge 1}\Bigl(1+\frac{-w^2}{\lambda_n}\Bigr)=W(-w^2).
\]
The previous theorem immediately yields the coefficient extraction statement needed for coefficient-comparison arguments.

\begin{corollary}[Coefficient extraction at infinity]\label{cor:logphi-coeff}
Fix a sector $\mathcal{S}$ in the $w$-plane such that $-w^2\in\Sigma_\delta$ for some $\delta\in(0,\tfrac{\pi}{2})$
(e.g.\ $|\arg w+\tfrac{\pi}{2}|\le \tfrac{\pi}{4}-\varepsilon$ for some $\varepsilon>0$).
Then as $|w|\to\infty$ with $w\in\mathcal{S}$,
\begin{align*}
\log\varphi(w)=\log W(-w^2)
&= \textup{(terms involving $w$ and odd powers of $w^{-1}$)}\\
&-\sum_{m=1}^{N}\frac{Z(-m)}{m}\,w^{-2m} \;+\;O(|w|^{-2N-1})    
\end{align*}

for every $N\in\N$.
In particular, for every $m\ge 1$,
\begin{equation}\label{eq:logphi-evencoeff}
[w^{-2m}]\,\log\varphi(w)=-\frac{Z(-m)}{m}.
\end{equation}
\end{corollary}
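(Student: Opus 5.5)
The plan is to substitute $\lambda=-w^2$ into the expansion \eqref{eq:logW-asymp} of Theorem~\ref{thm:logW-asymp} and sort the resulting terms by parity in $w$.

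\textbf{Step 1: the sector.} For $w$ in $\mathcal S$ we have $-w^2\in\Sigma_\delta$. Indeed, if $\arg w=-\tfrac{\pi}{2}+\eta$ with $|\eta|\le\tfrac{\pi}{4}-\varepsilon$, then $\arg(-w^2)=2\eta$. This satisfies $|2\eta|\le\tfrac{\pi}{2}-2\varepsilon$, so we may take $\delta=2\varepsilon$. The branch of $\log\varphi(w)$ on $\mathcal S$ is defined as $\log W(-w^2)$, with the branch of $\log W$ fixed in Theorem~\ref{thm:logW-asymp}. Since $|{-w^2}|=|w|^2$, the remainder $O(|\lambda|^{-N-1/2})$ becomes $O(|w|^{-2N-1})$.

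\textbf{Step 2: substitution and sorting.} With $\lambda=-w^2$, the principal branch gives $\lambda^{1/2}=\pm i w$, with a sign that is fixed throughout $\mathcal S$. More generally $\lambda^{-j}=e^{-j\log(-w^2)}$, where $\log(-w^2)=2\log w+\text{const}$ on $\mathcal S$ for a suitable branch of $\log w$. The terms then sort as follows.
\begin{itemize}
\item The half-integer indices $j=k+\tfrac12$ produce constant multiples of $w^{-2k-1}$, which are odd powers of $w^{-1}$.
\item The term $\pi\lambda^{1/2}$ is a multiple of $w$.
\item The terms $c_0\log\lambda$ and $C_W$ become $c_0\log(-w^2)+C_W$. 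These are the logarithmic and constant pieces, and they belong to the ``terms involving $w$'' in the statement.
\item For integer $j=m\ge1$ we have $\lambda^{-m}=(-1)^m w^{-2m}$ exactly, with no branch ambiguity.
\end{itemize}

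\textbf{Step 3: the even coefficients.} Using \eqref{eq:logW-integer-coeff}, the coefficient of $w^{-2m}$ is
\[
(-1)^m\cdot\frac{(-1)^{m+1}}{m}Z(-m)=-\frac{Z(-m)}{m},
\]
which gives \eqref{eq:logphi-evencoeff}. The coefficient is well defined because the functions $w^{-2m}$, $w^{-2k-1}$, $w$, $\log w$ and $1$ form an asymptotic scale on $\mathcal S$, so coefficients in such an expansion are unique.

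\textbf{Main obstacle.} The only delicate point is bookkeeping of the branches. We must check that the odd half-integer terms contribute nothing to the even powers, that the sign $(-1)^m$ from $\lambda^{-m}$ is branch-independent (it is, since $m$ is an integer), and that the constant and logarithmic terms are not mistaken for $w^0$-type coefficients. The last point does not matter, since only $m\ge1$ is claimed.
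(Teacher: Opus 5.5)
Your proposal is correct and follows the paper's route. You substitute $\lambda=-w^2$ into Theorem~\ref{thm:logW-asymp}, note that $\lambda^{1/2}$ and the half-integer powers $\lambda^{-j}$ become a multiple of $w$ and odd powers of $w^{-1}$, use $\lambda^{-m}=(-1)^m w^{-2m}$ with \eqref{eq:logW-integer-coeff} to obtain $-Z(-m)/m$, and convert the remainder via $|\lambda|=|w|^2$. Your explicit sector check ($\delta=2\varepsilon$) and your handling of the $c_0\log\lambda$ and $C_W$ terms are somewhat more careful than the paper's brief proof, but the argument is the same.
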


\begin{proof}
Apply Theorem \ref{thm:logW-asymp} with $\lambda=-w^2$.
The terms $\lambda^{1/2}$ and $\lambda^{-j}$ with $j\in \tfrac12+\Z_{\ge 0}$ become
odd powers of $w$ and $w^{-1}$ after choosing a consistent branch of $\sqrt{\lambda}$ on $\Sigma_\delta$.
For integer $m\ge 1$, $\lambda^{-m}=(-w^2)^{-m}=(-1)^m w^{-2m}$, and the coefficient from
\eqref{eq:logW-integer-coeff} yields $-\frac{Z(-m)}{m}w^{-2m}$.
The remainder $O(|\lambda|^{-N-1/2})$ becomes $O(|w|^{-2N-1})$.
\end{proof}

\subsection{A robust admissibility criterion: analytic perturbations of quadratic lattices}

The proof given above is stable under perturbations and depends only on analyticity of the correction term.
We record a general criterion which may be useful in related extremal-function problems.

\begin{theorem}[General admissibility criterion]\label{thm:general-criterion}
Fix $a>0$. Assume that $q$ is analytic in a neighborhood of the closed interval $[0,a^{-2}]$
(equivalently, there exists $\rho>a^{-2}$ such that $q$ is analytic on $|u|<\rho$), and that
$q$ is real-valued on $[0,a^{-2}]$. Define
\[
\lambda_n:=(n+a)^2+q\bigl((n+a)^{-2}\bigr),\qquad n\ge 0,
\]
and assume $\lambda_n>0$ for all $n$. Then the sequence $(\lambda_n)_{n\ge 0}$ is admissible in the
sense of Definition~\ref{def:admissible}.
\end{theorem}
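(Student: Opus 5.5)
The plan is to rerun the proof of Corollary~\ref{cor:admissible} with $\tfrac12$ replaced by $a$, $u_n:=(n+a)^{-2}\in(0,a^{-2}]$, $r_0:=a^{-2}$, and the index starting at $n=0$. Write $b_n:=n+a$.

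\textbf{Conditions (1) and (3).} Condition (1) is exactly the hypothesis $\lambda_n>0$. For (3), set $Q_*:=\sup_{[0,a^{-2}]}|q|<\infty$, which is finite by continuity of $q$ on the compact interval. Then $\lambda_n=|\lambda_n|\ge b_n^2-Q_*$, so
\[
\sum_{n\ge0}e^{-|\lambda_n|t}\le e^{Q_*t}\sum_{n\ge0}e^{-b_n^2t}\ll 1+t^{-1/2},\qquad t\in(0,1],
\]
by comparison with a Gaussian integral. This gives (3) with $\beta=1$, and also absolute convergence for all $t>0$.

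\textbf{Condition (2).} I would follow Proposition~\ref{prop:reduction} verbatim: Lemma~\ref{lem:exp-taylor} with $x=q(u_n)$, $|x|\le Q_*$, gives
\[
\theta(t)=\sum_{r\le R}\frac{(-t)^r}{r!}F_r(t)+O\bigl(t^{R+1}\Theta_0^{(a)}(t)\bigr),
\]
where $F_r(t):=\sum_n q(u_n)^re^{-b_n^2t}$ and $\Theta_m^{(a)}(t):=\sum_{n\ge0}b_n^{-2m}e^{-b_n^2t}$.

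Next, choose $\rho'$ with $a^{-2}<\rho'<\rho$. Cauchy estimates on $|u|=\rho'$ give Taylor coefficients $c_{r,m}$ of $q^r$ with $|c_{r,m}|\le C_r\rho'^{-m}$. Hence $\sum_m|c_{r,m}|r_0^m<\infty$, and $F_r=\sum_m c_{r,m}\Theta^{(a)}_m$ by Tonelli, exactly as in Lemma~\ref{lem:crm-bound} and Proposition~\ref{prop:Fr-Theta}.

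For the asymptotics of $\Theta_m^{(a)}$, I would use the Mellin representation with Dirichlet series
\[
S_m(s)=\zeta\bigl(2(m+s),a\bigr)=\sum_{n\ge0}(n+a)^{-2(m+s)},
\]
with no subtraction needed since the sum starts at $n=0$. The Hurwitz zeta function has a single simple pole at $p=1$ with residue $1$, and polynomial growth in vertical strips, which is standard via its Hermite or Euler--Maclaurin continuation. Shifting to $\Re s=-K-\tfrac12$ therefore yields the analogue of \eqref{eq:Theta-K}. The only poles are $s=\tfrac12-m$ from $\zeta$ and $s=0,-1,\dots$ from $\Gamma$. These never collide because $\tfrac12-m\notin\Z$, so all poles are simple and every exponent lies in $\tfrac12\Z$ with no logarithms. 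Theorem~\ref{thm:theta-asymp} then transfers unchanged, using the bound $t^{R+1}\Theta_0^{(a)}(t)=O(t^{R+1/2})$.

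\textbf{Main obstacle.} The delicate point is the uniform tail control in $m$ in Proposition~\ref{prop:Fr-asymp}. On $\Re s=s_K$ with $m>K+2$ we need $|S_m(s)|\ll_K b_0^{-2m}=r_0^m$. Here the first term is $a^{-2(m+\Re s)}$, and the integral comparison
\[
\sum_{n\ge0}b_n^{-\alpha}\le a^{-\alpha}+\frac{a^{1-\alpha}}{\alpha-1}
\]
still gives $\ll_K a^{2K+1}(1+a)\,r_0^m$. This is again geometric with ratio $r_0=a^{-2}$, which is summable against $|c_{r,m}|\le C_r\rho'^{-m}$ precisely because $\rho'>a^{-2}$; that is where analyticity on a neighbourhood of $[0,a^{-2}]$ is used. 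The same bound handles the coefficients $A_{m,j}=\frac{(-1)^j}{j!}\zeta(2(m-j),a)$ for $m\ge j+2$, so the termwise $m$-summation converges. If $a<1$, the factor $a^{-2(m+\Re s)}$ grows like $r_0^m$ with $r_0>1$, but the Cauchy ratio $r_0/\rho'<1$ still ensures convergence.
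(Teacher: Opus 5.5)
Your proposal is correct and follows the paper's proof essentially step for step. The shared steps are a uniform Taylor expansion of $e^{-tq(u_n)}$ in $t$, Cauchy estimates giving $F_r=\sum_m c_{r,m}\Theta^{(a)}_m$, and a Mellin contour shift for $\Gamma(s)t^{-s}\zeta(2(m+s),a)$, whose simple poles at $s=\tfrac12-m$ and $s=0,-1,\dots$ never collide. Your explicit bound $|S_m(s)|\ll_K a^{2K+1}(1+a)\,r_0^m$ on $\Re s=s_K$ spells out the geometric summability in $m$ that the paper's proof of this criterion only asserts. Like the paper, you use the disk form of the analyticity hypothesis, which is not actually equivalent to analyticity in a neighborhood of $[0,a^{-2}]$; the weaker form reduces to it by discarding finitely many $n$ and replacing $a$ by $a+n_0$.
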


\begin{proof}
Set $a_n:=n+a$ and $u_n:=a_n^{-2}$. Then $u_n\in(0,u_0]$ where $u_0=a^{-2}$.
Choose $\rho>a^{-2}$ such that $q$ is analytic on the disk $|u|<\rho$.
Set $u_0:=a^{-2}$ so that $0<u_n\le u_0<\rho$ for all $n$.
Then $q$ is bounded on $[0,u_0]$, say $\sup_{u\in[0,u_0]}|q(u)|\le Q_*<\infty$.

\emph{(1) and absolute convergence.}
Since $q(u_n)$ is real and bounded and $a_n^2\to\infty$, we have $\lambda_n\to+\infty$ and
$\Re\lambda_n=\lambda_n>0$. Moreover, $\lambda_n\ge a_n^2-Q_*\gg n^2$, hence
$\sum_{n\ge0}e^{-\lambda_nt}$ converges absolutely for every $t>0$ and
$\sum_{n\ge0}e^{-\lambda_nt}\ll t^{-1/2}$ as $t\to0^+$, implying admissibility~(3) as before.

\emph{(2) Full heat-trace expansion.}
Write
\[
\theta(t):=\sum_{n\ge0}e^{-\lambda_nt}
=\sum_{n\ge0}e^{-a_n^2t}e^{-tq(u_n)}.
\]
Fix $N\in\N$. Since $|q(u_n)|\le Q_*$, Taylor's theorem gives for $t\in(0,1]$,
uniformly in $n$,
\[
e^{-tq(u_n)}=\sum_{r=0}^{N}\frac{(-t)^r}{r!}\,q(u_n)^r+O(t^{N+1}).
\]
Hence
\[
\theta(t)=\sum_{r=0}^{N}\frac{(-t)^r}{r!}\,F_r(t)+O(t^{N+1})\Theta_{a,0}(t),
\qquad
F_r(t):=\sum_{n\ge0}q(u_n)^r e^{-a_n^2t},
\]
where $\Theta_{a,0}(t):=\sum_{n\ge0}e^{-a_n^2t}$.
Since $\Theta_{a,0}(t)=O(t^{-1/2})$ as $t\to0^+$, the remainder is $O(t^{N+1/2})$.

It remains to show that each $F_r(t)$ has a full expansion.
Because $q$ is analytic on $\{|u|\le r\}$, the function $q(u)^r$ has a power series
\[
q(u)^r=\sum_{m\ge0}c_{r,m}u^m,\qquad |u|<r,
\]
and by Cauchy's estimate there is $C_r>0$ such that $|c_{r,m}|\le C_r r^{-m}$.
Since $0<u_n\le u_0<r$,
\[
\sum_{m\ge0}|c_{r,m}|\sum_{n\ge0}u_n^m e^{-a_n^2t}
\le \Theta_{a,0}(t)\sum_{m\ge0}C_r\Bigl(\frac{u_0}{r}\Bigr)^m<\infty,
\]
so we may exchange sums and obtain
\[
F_r(t)=\sum_{m\ge0}c_{r,m}\,\Theta_{a,m}(t),
\qquad
\Theta_{a,m}(t):=\sum_{n\ge0}a_n^{-2m}e^{-a_n^2t}.
\]

Finally, each $\Theta_{a,m}(t)$ has a full small-time expansion by the Mellin/Hurwitz zeta
contour shift (exactly as in your Proposition~\ref{prop:Theta-asymp}, with $\zeta(\cdot,a)$),
and the $m$-sum above converges absolutely and geometrically, so the expansions may be summed termwise.
This yields a full expansion for each $F_r(t)$ and hence for $\theta(t)$.
Therefore admissibility~(2) holds, completing the proof.
\end{proof}

\section{Proof of Conjecture~1 in \cite{BOCRS}}\label{sec:conj1}

In this section we prove Conjecture~1 of \cite{BOCRS}. Recall that
\[
L_+(s):=\sum_{n\ge1}\tau_n^{-s}\qquad(\Ree s>1),
\]
and that $L_+$ admits a meromorphic continuation to $\C$ (e.g.\ via the spectral zeta function $Z(s)=\sum_{n\ge1}\lambda_n^{-s}=L_+(2s)$ from Theorem~\ref{thm:Z-meromorphic}).

\begin{theorem}[Conjecture~1 of \cite{BOCRS}]\label{thm:conj1}
For every $k\in\Z$ we have
\[
L_+(-2k)=\frac{L_+(2k)}{(2\pi i C)^{2k}}.
\]
\end{theorem}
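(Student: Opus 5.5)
The plan is to compare two independent computations of the even coefficients in the large-$|w|$ expansion of $\log\varphi(w)$: one from admissibility, one from the inversion functional equation of \cite{BOCRS}. First, the case $k=0$ is trivial, and the statement for $-k$ is equivalent to that for $k$ after multiplying through by $(2\pi iC)^{2k}$. So it suffices to treat $k=m\ge1$. Since $Z(s)=L_+(2s)$, the target identity reads
\[
Z(-m)=\frac{Z(m)}{(2\pi i C)^{2m}}.
\]

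\emph{Step 1 (the side at infinity).} Corollary~\ref{cor:logphi-coeff} gives, in a sector $\mathcal S$ around the negative imaginary axis,
\[
[w^{-2m}]\log\varphi(w)=-\frac{Z(-m)}{m}.
\]
All other terms in that expansion are $w$, $\log w$, a constant, or odd powers of $w^{-1}$.

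\emph{Step 2 (the side at zero).} Near $z=0$ the convergent Taylor expansion of $\log\varphi$ is
\[
\log\varphi(z)=\sum_n\log\bigl(1-z^2/\lambda_n\bigr)=-\sum_{m\ge1}\frac{Z(m)}{m}z^{2m}.
\]
Factoring $\varphi=\Phi(z)\Phi(-z)$ shows that the even part of $\log\Phi(z)$ is $\tfrac12\log\varphi(z)$. The odd part of $\log\Phi(z)$ involves the alternating sums $L_-$.

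\emph{Step 3 (inversion).} I would use the BOCRS functional equation relating $\Phi(z)$ and $\Phi(Tz)$ with $Tz=(2\pi iCz)^{-1}$. Symmetrizing it in $z\mapsto -z$ (note $T(-z)=-Tz$) should give an identity of the form
\[
\log\varphi(Tz)=\log\varphi(z)+E(z),
\]
valid for small $z$ in a sector. Here $E(z)$ is an "elementary" correction that contains only $\log z$, constants, odd powers of $z$ and $z^{-1}$, and the term $\pi\lambda^{1/2}$-type piece $\propto z^{-1}$. I would choose the sector so that $w=Tz$ lands in $\mathcal S$ as $z\to0$. Then $w^{-2m}=(2\pi iC)^{2m}z^{2m}$, and matching the coefficients of $z^{2m}$ gives
\[
-\frac{Z(-m)}{m}(2\pi iC)^{2m}=-\frac{Z(m)}{m},
\]
which is the claim.

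\emph{Main obstacle.} The main difficulty is Step 3: extracting from the BOCRS inversion equation the precise statement that the discrepancy $E$ has no even powers $z^{2m}$ with $m\ge1$. The equation intertwines $\Phi$ with possibly additional explicit factors (exponentials, Gamma-type or trigonometric factors). My first attempt would be to write the equation out in logarithmic form, add its $z\mapsto -z$ version, and check that every explicit factor is either even with a known expansion that can be computed and compared directly, or contributes only odd or logarithmic terms.

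A secondary point is that an asymptotic expansion in a sector determines its coefficients uniquely. This must be applied to both sides on a common sector, using the fact that the error term is $O(|w|^{-2N-1})$ from Corollary~\ref{cor:logphi-coeff}. In the same step the branches of $\log$ and $\lambda^{1/2}$ must be matched so that the odd terms and the constants (including $C_W$) do not contaminate the even coefficients.
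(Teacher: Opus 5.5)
Your overall architecture is the same as the paper's. You compare the Taylor coefficients $-L_+(2m)/m$ of $\log\varphi$ at $0$ with the coefficients $-Z(-m)/m=-L_+(-2m)/m$ at infinity from Corollary~\ref{cor:logphi-coeff}, linked through $w=Tz$, and you guessed the right shape for $E(z)$. However, Step~3 is the heart of the argument, you leave it open, and the method you propose for it would not work. The BOCRS equation \cite[(1.6)]{BOCRS} is not multiplicative. After gauging with $A(z)=\e^{1/(4Cz)}\Phi(z)$ it reads $2\sqrt{\pi C}\,z\,A(z)=\e^{-i\pi/4}A(Tz)+\e^{i\pi/4}A(-Tz)$, which is a two-term linear relation. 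For $z$ in your sector the two terms on the right are of comparable size, so taking logarithms and adding the $z\mapsto -z$ version does not split into $\log\varphi(Tz)$ plus elementary terms.

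Two ideas are missing. First, write the relation at $z$ and at $-z$ (using $T(-z)=-Tz$) and solve the resulting $2\times2$ linear system for $A(Tz)$ and $A(-Tz)$. Then multiply the two solutions. The gauge factors cancel in $A(Tz)A(-Tz)=\varphi(Tz)$, and the cross term carries $\e^{i\pi/2}+\e^{-i\pi/2}=0$, so you get the exact identity $\varphi(Tz)=\pi C z^2\bigl(A(z)^2+A(-z)^2\bigr)$. Second, this is still a sum, and it becomes a product only up to a flat error. For $|\arg z|<\varepsilon$ one has $A(-z)^2/A(z)^2=\e^{-1/(Cz)}\bigl(\Phi(-z)/\Phi(z)\bigr)^2=O(|z|^N)$ for every $N$. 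Hence
\[
\log\varphi(Tz)=\log(\pi C)+2\log z+\log\varphi(z)+\frac{1}{2Cz}+\log\frac{\Phi(z)}{\Phi(-z)}+(\text{flat}),
\]
and $\log(\Phi(z)/\Phi(-z))$ is odd. This is exactly what shows that your $E(z)$ has no $z^{2m}$ terms for $m\ge1$. The correction $\log\bigl(1+A(-z)^2/A(z)^2\bigr)$ is not a power series at all; it is harmless only because it is beyond all orders in the sector. So the exponential dominance of $A(z)$ over $A(-z)$ there is an essential input, not bookkeeping. With these two steps supplied, your Steps~1 and~2 and the coefficient matching go through exactly as in the paper.
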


\subsection{The inversion identity for $\varphi$}

We adopt the notation of \cite{BOCRS}. In particular,
\[
\varphi(z)=\Phi(z)\Phi(-z),\qquad
\Phi(z)=\prod_{n\ge1}\Bigl(1+(-1)^n\frac{z}{\tau_n}\Bigr),
\]
and $\varphi$ is the extremal function normalized by $\varphi(0)=1$ (hence $\Phi(0)=1$).
Let
\begin{equation}\label{eq:def-T}
Tz:=\frac{1}{2\pi i C z},\qquad z\in\C^\ast,
\end{equation}
and define the gauged function
\begin{equation}\label{eq:def-A}
A(z):=\e^{\frac{1}{4Cz}}\Phi(z),\qquad z\in\C^\ast.
\end{equation}
Note that $A(z)A(-z)=\Phi(z)\Phi(-z)=\varphi(z)$ for all $z\neq 0$.

\begin{lemma}[Gauged functional equation]\label{lem:gauged-FE}
Let $A$ be defined by \eqref{eq:def-A}. Then for every $z\in\C^\ast$,
\begin{equation}\label{eq:gauged-FE}
2\sqrt{\pi C}\,z\,A(z)=\e^{-i\pi/4}A(Tz)+\e^{i\pi/4}A(-Tz).
\end{equation}
\end{lemma}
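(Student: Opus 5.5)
The identity \eqref{eq:gauged-FE} should be a direct rewriting of the inversion functional equation for $\Phi$ proved in \cite{BOCRS}. The plan is to substitute the definition \eqref{eq:def-A} into both sides, strip off the exponential gauge factors, and check that what remains is exactly the BOCRS equation in our notation. No analytic estimates are needed; the content is bookkeeping of exponentials, constants and branches.

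\textbf{Step 1: the gauge factors.} By \eqref{eq:def-A}, the left-hand side of \eqref{eq:gauged-FE} equals $2\sqrt{\pi C}\,z\,\e^{\frac{1}{4Cz}}\Phi(z)$. For the right-hand side we need $A(\pm Tz)$. From \eqref{eq:def-T},
\[
\frac{1}{4C\,Tz}=\frac{2\pi i C z}{4C}=\frac{i\pi z}{2},
\]
so
\[
A(Tz)=\e^{i\pi z/2}\,\Phi(Tz),\qquad A(-Tz)=\e^{-i\pi z/2}\,\Phi(-Tz).
\]

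\textbf{Step 2: the equivalent form.} Using Step 1, \eqref{eq:gauged-FE} is equivalent, for every $z\in\C^\ast$, to
\[
2\sqrt{\pi C}\,z\,\e^{\frac{1}{4Cz}}\Phi(z)=\e^{i\pi(2z-1)/4}\,\Phi(Tz)+\e^{-i\pi(2z-1)/4}\,\Phi(-Tz).
\]
I expect this to be, possibly after dividing through by $\e^{\frac{1}{4Cz}}$, precisely the inversion functional equation for $\Phi$ in \cite{BOCRS} (the one that defines $C$). So the next step is to match it term by term with their statement.

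\textbf{Step 3: matching conventions with \cite{BOCRS}.} This is the only real obstacle. I would verify three things.
\begin{itemize}
\item \emph{Normalization of $T$.} Their inversion is $z\mapsto (2\pi i C z)^{-1}$. A quick sanity check is that $T(Tz)=z$, so the identity can also be read with the roles of $z$ and $Tz$ swapped; if BOCRS state it in the swapped direction, I would apply it at the point $Tz$ instead.
\item \emph{Branch of the square root.} $\sqrt{\pi C}$ is the positive root, since $C>0$.
\item \emph{Phases.} The factors $\e^{\mp i\pi/4}$ and $\e^{\pm i\pi z/2}$ must appear with the same signs as in BOCRS. If their version is written with $\Phi(-z)$ or with conjugate phases, the evenness relation $\varphi(z)=\Phi(z)\Phi(-z)$ and the substitution $z\mapsto -z$ should convert it to the displayed form.
\end{itemize}

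\textbf{Step 4: extension to all $z\neq 0$.} If the BOCRS identity is only stated on a region (for instance a half-plane), both sides of \eqref{eq:gauged-FE} are holomorphic on $\C^\ast$, because $\Phi$ is entire and the gauge factors are holomorphic away from $0$. The identity theorem on the connected set $\C^\ast$ then extends it to every $z\in\C^\ast$.
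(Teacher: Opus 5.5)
Your proposal is correct and is essentially the paper's proof. You compute $\frac{1}{4C\,Tz}=\frac{i\pi z}{2}$ to rewrite $A(\pm Tz)$, and your Step~2 display is then exactly \cite[(1.6)]{BOCRS} (with the phases $\e^{-i\pi/4+i\pi z/2}$ and $\e^{i\pi/4-i\pi z/2}$) multiplied through by $2\sqrt{\pi C}\,z$; no division by $\e^{1/(4Cz)}$ is needed. Since the paper quotes that identity as valid for all $z\in\C^\ast$, your convention checks and identity-theorem extension in Steps~3--4 are harmless but unnecessary.
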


\begin{proof}
We start from the functional equation \cite[(1.6)]{BOCRS} for $\Phi$ (valid for every $z\in\C^\ast$),
\[
\Phi(z)\e^{\frac{1}{4Cz}}
=
\frac{
\e^{-i\pi/4+\frac{i\pi}{2}z}\,\Phi(Tz)
+
\e^{i\pi/4-\frac{i\pi}{2}z}\,\Phi(-Tz)
}{2\sqrt{\pi C}\,z}.
\]
Using \eqref{eq:def-A} and the identity
\[
\frac{1}{4C(Tz)}=\frac{1}{4C}\cdot (2\pi i C z)=\frac{\pi i}{2}z,
\]
we have $\Phi(Tz)=\e^{-\frac{1}{4C(Tz)}}A(Tz)=\e^{-\frac{\pi i}{2}z}A(Tz)$ and similarly
$\Phi(-Tz)=\e^{\frac{\pi i}{2}z}A(-Tz)$. Substituting these expressions into \cite[(1.6)]{BOCRS}
cancels the $z$--dependent exponentials and yields \eqref{eq:gauged-FE}.
\end{proof}

\begin{proposition}[Inversion identity]\label{prop:inversion-identity}
For every $z\in\C^\ast$,
\begin{equation}\label{eq:phi-inversion}
\varphi(Tz)=\pi C z^2\bigl(A(z)^2+A(-z)^2\bigr).
\end{equation}
Equivalently, with $R(z):=A(z)/A(-z)$,
\begin{equation}\label{eq:phi-inversion-R}
\varphi(Tz)=\pi C z^2\,\varphi(z)\,\bigl(R(z)+R(z)^{-1}\bigr).
\end{equation}
\end{proposition}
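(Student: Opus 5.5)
We derive the inversion identity directly from the gauged functional equation of Lemma~\ref{lem:gauged-FE}. We apply it at both $z$ and $-z$ and then multiply. Since $T(-z)=-Tz$, replacing $z$ by $-z$ in \eqref{eq:gauged-FE} gives
\[
-2\sqrt{\pi C}\,z\,A(-z)=\e^{-i\pi/4}A(-Tz)+\e^{i\pi/4}A(Tz).
\]
Write $X:=A(Tz)$ and $Y:=A(-Tz)$. The two equations read
\[
2\sqrt{\pi C}\,zA(z)=\e^{-i\pi/4}X+\e^{i\pi/4}Y,\qquad
-2\sqrt{\pi C}\,zA(-z)=\e^{i\pi/4}X+\e^{-i\pi/4}Y .
\]

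The key step is to solve this $2\times2$ linear system for $X$ and $Y$, rather than to multiply the equations directly. Directly multiplying would give $A(z)A(-z)$ on the left, whereas the target expresses $XY=\varphi(Tz)$ in terms of $A(z)^2+A(-z)^2$. The coefficient matrix $\begin{pmatrix}\e^{-i\pi/4}&\e^{i\pi/4}\\ \e^{i\pi/4}&\e^{-i\pi/4}\end{pmatrix}$ has determinant $\e^{-i\pi/2}-\e^{i\pi/2}=-2i$. Setting $\alpha:=2\sqrt{\pi C}\,zA(z)$ and $\beta:=-2\sqrt{\pi C}\,zA(-z)$, Cramer's rule gives
\[
X=\frac{\e^{-i\pi/4}\alpha-\e^{i\pi/4}\beta}{-2i},\qquad
Y=\frac{\e^{-i\pi/4}\beta-\e^{i\pi/4}\alpha}{-2i}.
\]

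Next we multiply and simplify:
\[
XY=\frac{(\e^{-i\pi/4}\alpha-\e^{i\pi/4}\beta)(\e^{-i\pi/4}\beta-\e^{i\pi/4}\alpha)}{-4}
=\frac{\e^{-i\pi/2}\alpha\beta-\alpha^2-\beta^2+\e^{i\pi/2}\alpha\beta}{-4}
=\frac{\alpha^2+\beta^2}{4},
\]
because the cross terms combine to $(-i+i)\alpha\beta=0$. Substituting the values of $\alpha$ and $\beta$ gives $XY=\pi C z^2\bigl(A(z)^2+A(-z)^2\bigr)$.

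Finally, the remark after \eqref{eq:def-A} gives $A(w)A(-w)=\varphi(w)$ for every $w\ne0$; taking $w=Tz$ yields $XY=\varphi(Tz)$, which proves \eqref{eq:phi-inversion}. For \eqref{eq:phi-inversion-R}, we factor $A(z)^2+A(-z)^2=A(z)A(-z)\bigl(R(z)+R(z)^{-1}\bigr)=\varphi(z)\bigl(R(z)+R(z)^{-1}\bigr)$. This is valid wherever $A(\pm z)\ne0$, that is, away from the zeros of $\Phi(\pm z)$. At those points both sides of \eqref{eq:phi-inversion-R} are read as meromorphic identities, or equivalently we use the form \eqref{eq:phi-inversion}, which holds everywhere.

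The only points needing care are the sign bookkeeping when substituting $-z$ (in particular $T(-z)=-Tz$) and checking that the cross terms cancel. Everything else is algebra.
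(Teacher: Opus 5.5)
Your proposal is correct and follows the paper's proof essentially verbatim. You apply Lemma~\ref{lem:gauged-FE} at $z$ and $-z$ (using $T(-z)=-Tz$), solve the $2\times2$ system for $A(\pm Tz)$, multiply so that the cross terms cancel via $\e^{-i\pi/2}+\e^{i\pi/2}=0$, and identify $A(Tz)A(-Tz)=\varphi(Tz)$; your explicit Cramer's-rule computation checks out, and your remark that \eqref{eq:phi-inversion-R} is to be read as a meromorphic identity at zeros of $\Phi(\pm z)$ is a small clarification the paper leaves implicit.
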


\begin{proof}
Apply Lemma~\ref{lem:gauged-FE} at $z$ and at $-z$. Since $T(-z)=-Tz$, we obtain the linear system
\begin{align*}
\e^{-i\pi/4}A(Tz)+\e^{i\pi/4}A(-Tz) &= 2\sqrt{\pi C}\,z\,A(z),\\
\e^{i\pi/4}A(Tz)+\e^{-i\pi/4}A(-Tz) &= -\,2\sqrt{\pi C}\,z\,A(-z).
\end{align*}
Solving for $A(Tz)$ and $A(-Tz)$ and multiplying the two solutions, the mixed term cancels because
$\e^{-i\pi/2}+\e^{i\pi/2}=0$, and we get
\[
A(Tz)A(-Tz)=\pi C z^2\bigl(A(z)^2+A(-z)^2\bigr).
\]
Finally, observe that $A(Tz)A(-Tz)=\Phi(Tz)\Phi(-Tz)=\varphi(Tz)$ (the gauge factors cancel),
which proves \eqref{eq:phi-inversion}. The equivalent form \eqref{eq:phi-inversion-R} follows from
$A(z)^2+A(-z)^2=A(z)A(-z)\bigl(R+R^{-1}\bigr)=\varphi(z)\bigl(R+R^{-1}\bigr)$.
\end{proof}

\subsection{Sectorial logarithms and an even-coefficient invariance}

We now extract from \eqref{eq:phi-inversion-R} a parity statement about the \emph{even} coefficients
in the small-$z$ expansion of $\log\varphi(Tz)$.


\begin{definition}[Poincar\'e asymptotic coefficients with logarithms]\label{def:poincare-coeff}
Let $S$ be a sector with vertex at $0$ and fix a holomorphic branch of $\log z$ on $S$.
Let $F$ be holomorphic on $S\cap\{0<|z|<r\}$.

We say that $F$ admits a (logarithmic) Poincar\'e asymptotic expansion on $S$ if there exist
an integer $n_0$ and $P\in\mathbb{N}_0$ such that for every integer $N>n_0$ one has
\[
F(z)=\sum_{p=0}^{P}(\log z)^p \sum_{n=n_0}^{N-1} a_{n,p}\,z^n \;+\; O(|z|^{N})
\qquad (z\to 0,\ z\in S),
\]
for some coefficients $a_{n,p}\in\mathbb{C}$ (depending only on $F$ and $S$).
These coefficients are uniquely determined, we write
\[
[z^n(\log z)^p]_{\mathrm{as}}\,F:=a_{n,p},
\qquad\text{and in particular}\qquad
[z^n]_{\mathrm{as}}\,F:=[z^n(\log z)^0]_{\mathrm{as}}\,F.
\]
If $F$ is holomorphic at $0$ (so that no logarithmic or negative-power terms occur),
then $[z^n]_{\mathrm{as}}F$ coincides with the usual Taylor coefficient, denoted $[z^n]_{\mathrm{Tay}}F$.
\end{definition}

\begin{remark}\label{rem:as-infty}
If $G$ is holomorphic in a sector at $\infty$ and admits an asymptotic expansion in inverse powers,
we interpret $[w^{-n}(\log w)^p]_{\mathrm{as}}G$ as $[z^{n}(\log z)^p]_{\mathrm{as}}(G(1/z))$ as $z\to 0$
in the corresponding sector.
\end{remark}

\begin{lemma}[Odd Laurent structure of $\log R$ near $0$]\label{lem:logR-odd}
There exists $r>0$ such that on $|z|<r$ one can choose a holomorphic branch of
\[
H(z):=\log R(z)=\log\frac{A(z)}{A(-z)}
\]
satisfying $H(z)=\frac{1}{2Cz}+h(z)$, where $h$ is holomorphic at $0$ and odd
(hence $h(z)=\sum_{m\ge0} h_{2m+1}z^{2m+1}$).
\end{lemma}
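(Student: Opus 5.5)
Unwinding the definitions makes this essentially immediate. Since $A(z)=\e^{\frac{1}{4Cz}}\Phi(z)$ and $A(-z)=\e^{-\frac{1}{4Cz}}\Phi(-z)$, for $z\neq0$ we have
\[
R(z)=\frac{A(z)}{A(-z)}=\e^{\frac{1}{2Cz}}\,\frac{\Phi(z)}{\Phi(-z)}.
\]
The plan is to take $H(z):=\frac{1}{2Cz}+h(z)$ with $h(z):=\log\Phi(z)-\log\Phi(-z)$. Here $\log\Phi$ is the principal branch near $0$. It exists because $\Phi$ is entire with $\Phi(0)=1$: by continuity there is $r>0$ with $|\Phi(z)-1|<\tfrac12$ on $|z|<r$. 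Concretely, one can take $r<\tau_1$, noting that the zeros of $\Phi$ are at $z=-(-1)^n\tau_n$, so $\Phi$ is zero-free on $|z|<\tau_1$; one then shrinks $r$ further if needed.

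On the punctured disc $0<|z|<r$ the function $H$ is holomorphic and satisfies $\e^{H(z)}=R(z)$. Thus $H$ is a holomorphic branch of $\log R$ there. The singular part $\frac{1}{2Cz}$ is single-valued, so there is no monodromy issue.

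It remains to check the properties of $h$. First, $h$ is holomorphic on $|z|<r$ with $h(0)=\log1-\log1=0$. Second, $h$ is odd: $h(-z)=\log\Phi(-z)-\log\Phi(z)=-h(z)$. The point to verify is that the principal branches evaluated at $z$ and at $-z$ are the same branch. They are, because the disc is symmetric under $z\mapsto -z$ and both $\log\Phi(z)$ and $\log\Phi(-z)$ are defined by the single principal-log composition on $\{|\Phi-1|<\tfrac12\}$. Since $h$ is odd and holomorphic at $0$, its Taylor series contains only odd powers, $h(z)=\sum_{m\ge0}h_{2m+1}z^{2m+1}$.

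Optionally, $h$ can be written explicitly from the product. Using $\log\Phi(z)=\sum_n\log\bigl(1+(-1)^n z/\tau_n\bigr)$, we get $h_{2m+1}=\frac{2}{2m+1}\sum_n (-1)^{n(2m+1)}\tau_n^{-(2m+1)}$, which relates to $L_-$ values for $m\ge1$. The $m=0$ term is only conditionally convergent, so it is better read off from the derivative of $\log\Phi$ at $0$ than from the series. None of this is needed for the lemma.

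The only real care point is the choice of branch. A branch of $\log R$ is determined only up to adding $2\pi i k$, and a constant would destroy oddness. The construction above fixes the normalization $h(0)=0$, and this is the branch the lemma intends.
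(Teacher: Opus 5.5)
Your proposal is correct and follows essentially the same route as the paper: factor $R(z)=\e^{\frac{1}{2Cz}}\Phi(z)/\Phi(-z)$, take a holomorphic logarithm $h$ of $\Phi(z)/\Phi(-z)$ normalized by $h(0)=0$, and get oddness from $z\mapsto -z$. Writing $h$ as $\log\Phi(z)-\log\Phi(-z)$ with principal logarithms on a symmetric disc where $|\Phi-1|<\tfrac12$ makes the oddness check a bit more transparent than the paper's one-line identity $\log\bigl(\Phi(-z)/\Phi(z)\bigr)=-h(z)$, but the argument is the same.
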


\begin{proof}
Using \eqref{eq:def-A} we have
\[
R(z)=\frac{A(z)}{A(-z)}=\e^{\frac{1}{2Cz}}\frac{\Phi(z)}{\Phi(-z)}.
\]
Since $\Phi(0)=1$, the ratio $\Phi(z)/\Phi(-z)$ is holomorphic near $0$ and equals $1$ at $z=0$.
Hence we may choose the holomorphic logarithm
\[
h(z):=\log\frac{\Phi(z)}{\Phi(-z)}
\]
with $h(0)=0$. Then $h(-z)=\log\frac{\Phi(-z)}{\Phi(z)}=-h(z)$, so $h$ is odd.
Defining $H(z):=\frac{1}{2Cz}+h(z)$ gives the desired branch of $\log R(z)$.
\end{proof}

We then \emph{define} $\log\!\bigl(R(z)+R(z)^{-1}\bigr)$ on $S_\varepsilon\cap\{|z|<r\}$ by the identity $\log\!\bigl(R+R^{-1}\bigr)=H+\log\!\bigl(1+e^{-2H}\bigr)$, where $\log(1+u)$ is the principal holomorphic logarithm on $|u|<1/2$, this choice is compatible with the factorization $R+R^{-1}=e^{H}(1+e^{-2H})$ and avoids branch ambiguities.

\begin{lemma}[Even-coefficient invariance]\label{lem:even-coeff-invariance}
Fix $\varepsilon\in(0,\pi/4)$ and let
\[
S_\varepsilon:=\Bigl\{z\in\C:\ 0<|z|<\tau_1,\ |\arg z|<\varepsilon\Bigr\}.
\]
On $S_\varepsilon$ we take the branch of $\log z$ with $\arg z\in(-\varepsilon,\varepsilon)$.
Then $\log\varphi(Tz)$ admits a logarithmic Poincar\'e asymptotic expansion as $z\to0$ in $S_\varepsilon$
in the sense of Definition~\ref{def:poincare-coeff}, and for every integer $k\ge1$,
\begin{equation}\label{eq:even-coeff-invariance}
[z^{2k}]_{\mathrm{as}}\log\varphi(Tz)=[z^{2k}]_{\mathrm{Tay}}\log\varphi(z).
\end{equation}
\end{lemma}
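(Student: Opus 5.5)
Take logarithms in the inversion identity \eqref{eq:phi-inversion-R} of Proposition~\ref{prop:inversion-identity}. For $z\in S_\varepsilon$ with $|z|$ small this gives
\[
\log\varphi(Tz)=\log(\pi C)+2\log z+\log\varphi(z)+H(z)+\log\bigl(1+\e^{-2H(z)}\bigr),
\]
modulo a constant $2\pi i k$ fixed by the choice of branches. The branches are as follows: $\log z$ is the one on $S_\varepsilon$; $\log\varphi(z)$ is the Taylor branch near $0$ with $\log\varphi(0)=0$ (valid for $|z|<\tau_1$); $H$ is from Lemma~\ref{lem:logR-odd}. The left side is \emph{defined} as this right-hand side, which is consistent because both sides exponentiate to the same function. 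An additive constant only affects the $z^0$ coefficient, so it is harmless for $k\ge1$.

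Next, read off the asymptotic expansion term by term.
\begin{itemize}
\item $\log(\pi C)+2\log z$ contributes only to $z^0$ and $z^0\log z$.
\item $\log\varphi(z)$ is holomorphic at $0$ and contributes exactly $[z^{2k}]_{\mathrm{Tay}}\log\varphi(z)$.
\item By Lemma~\ref{lem:logR-odd}, $H(z)=\frac{1}{2Cz}+h(z)$ with $h$ odd, so $H$ has only odd powers ($z^{-1}$ and $z^{2m+1}$) and no even coefficients.
\end{itemize}
So an expansion of the required form with $P=1$, $n_0=-1$ exists, and \eqref{eq:even-coeff-invariance} follows once the last term is shown to be $O(|z|^N)$ for every $N$.

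For that last term, note $\Ree\frac{1}{2Cz}=\frac{\cos\arg z}{2C|z|}\ge\frac{\cos\varepsilon}{2C|z|}$ on $S_\varepsilon$, since $C>0$ and $|\arg z|<\varepsilon<\pi/4$. Also $h$ is bounded near $0$. Hence
\[
|\e^{-2H(z)}|\le \e^{2\sup|h|}\exp\bigl(-\tfrac{\cos\varepsilon}{C|z|}\bigr)\to0 .
\]
In particular $|\e^{-2H}|<1/2$ for $|z|$ small, so the principal logarithm applies and
\[
\bigl|\log(1+\e^{-2H})\bigr|\le 2|\e^{-2H}| ,
\]
which is exponentially small and therefore $O(|z|^N)$ for all $N$. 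Uniqueness of logarithmic Poincaré coefficients (Definition~\ref{def:poincare-coeff}) then gives \eqref{eq:even-coeff-invariance}.

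The main obstacle is the branch bookkeeping. One has to make sure that $\log\varphi(Tz)$ on $S_\varepsilon$ is a genuine holomorphic logarithm with a well-defined expansion, and that the sector, which was cut down to $|z|<r$, stays inside both the domain of the Taylor branch of $\log\varphi$ and the region where the exponential-smallness estimate holds. Shrinking the radius handles this. The constant ambiguity only affects the $z^0$ coefficient, which is why the statement is restricted to $k\ge1$.
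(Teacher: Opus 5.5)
Your proof is correct and follows essentially the same route as the paper. You take logarithms in the inversion identity \eqref{eq:phi-inversion-R}, factor $R+R^{-1}=\e^{H}(1+\e^{-2H})$, and read off that $\log(\pi C z^2)$ contributes no positive powers, $H=\frac{1}{2Cz}+h$ contributes no even ones, and $\log(1+\e^{-2H})$ is flat on the sector. Your explicit treatment of the $2\pi i k$ branch constant, which the paper leaves implicit, is a harmless refinement that only affects the $z^0$ coefficient.
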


\begin{proof}
On $S_\varepsilon$ we have $\Re(1/z)\ge (\cos\varepsilon)/|z|$. By Lemma~\ref{lem:logR-odd},
we may choose $r>0$ and a holomorphic branch
\[
H(z):=\log R(z)=\log\frac{A(z)}{A(-z)}
\qquad (0<|z|<r),
\]
such that $H(z)=\frac{1}{2Cz}+h(z)$ with $h$ holomorphic at $0$ and odd.

Hence, as $z\to0$ in $S_\varepsilon$,
\[
\Re H(z)=\Re\Bigl(\frac{1}{2Cz}\Bigr)+O(|z|)\to +\infty,
\]
so $\e^{-2H(z)}=O(\e^{-c/|z|})$ for some $c>0$, and therefore for every $N\in\N$,
\begin{equation}\label{eq:flat}
\e^{-2H(z)}=O(|z|^N)\qquad (z\to0,\ z\in S_\varepsilon).
\end{equation}
Shrinking $r$ if necessary, we may assume $|\e^{-2H(z)}|\le 1/2$ on $S_\varepsilon\cap\{|z|<r\}$,
so $\log(1+\e^{-2H(z)})$ is holomorphic there and is flat at $0$ by \eqref{eq:flat}.

Using \eqref{eq:phi-inversion-R} and writing
\[
R(z)+R(z)^{-1}=\e^{H(z)}\bigl(1+\e^{-2H(z)}\bigr),
\]
we obtain on $S_\varepsilon\cap\{|z|<r\}$ the identity
\begin{equation}\label{eq:logphiT}
\log\varphi(Tz)
=
\log(\pi C z^2)+\log\varphi(z)+H(z)+\log\bigl(1+\e^{-2H(z)}\bigr),
\end{equation}
where $\log(\pi C z^2)=\log(\pi C)+2\log z$ uses our fixed branch of $\log z$ on $S_\varepsilon$.
Since $\varphi(0)=1$ and $\varphi$ has no zeros in $|z|<\tau_1$, $\log\varphi(z)$ is holomorphic in $|z|<\tau_1$
and has a Taylor expansion with \emph{only even powers}.

Now inspect the contributions to the coefficient $[z^{2k}]_{\mathrm{as}}$ for $k\ge1$:
\begin{itemize}[leftmargin=2.5em]
\item $\log(\pi C z^2)=\log(\pi C)+2\log z$ contributes no positive powers of $z$.
\item $H(z)=\frac{1}{2Cz}+h(z)$ contributes no positive \emph{even} powers of $z$ because
$\frac{1}{2Cz}$ is a $z^{-1}$ term and $h$ is odd.
\item $\log(1+\e^{-2H(z)})$ is flat by \eqref{eq:flat} and thus contributes no terms to the (logarithmic)
Poincar\'e expansion.
\end{itemize}
Therefore, for every $k\ge1$, the coefficient of $z^{2k}$ in the logarithmic Poincar\'e expansion of
$\log\varphi(Tz)$ equals the Taylor coefficient of $z^{2k}$ in $\log\varphi(z)$, which is precisely
\eqref{eq:even-coeff-invariance}.
\end{proof}

\subsection{Coefficient identifications at $0$ and $\infty$}

We now compute the even coefficients on the right- and left-hand side of
\eqref{eq:even-coeff-invariance}.

\begin{lemma}[Taylor coefficients at $0$]\label{lem:tay-at-zero}
For $|z|<\tau_1$ one has
\begin{equation}\label{eq:logphi-taylor}
\log\varphi(z)=-\sum_{k\ge1}\frac{L_+(2k)}{k}\,z^{2k}.
\end{equation}
In particular, for every $k\ge1$,
\[
[z^{2k}]_{\mathrm{Tay}}\log\varphi(z)=-\frac{L_+(2k)}{k}.
\]
\end{lemma}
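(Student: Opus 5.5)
We start from the canonical product
\[
\varphi(z)=\prod_{n\ge1}\Bigl(1-\frac{z^2}{\tau_n^2}\Bigr),
\]
which converges absolutely and locally uniformly because $\sum_n\tau_n^{-2}<\infty$ (Lemma~\ref{lem:tau-lower}). Fix $z$ with $|z|<\tau_1$. Then $|z^2/\tau_n^2|\le |z|^2/\tau_1^2<1$ for all $n$, so each factor has a principal logarithm with the convergent expansion
\[
\log\Bigl(1-\frac{z^2}{\tau_n^2}\Bigr)=-\sum_{k\ge1}\frac{z^{2k}}{k\,\tau_n^{2k}}.
\]

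The first step is to check that $\sum_n\log(1-z^2/\tau_n^2)$ converges locally uniformly on $|z|<\tau_1$. Indeed, it is dominated termwise by $-\log(1-|z|^2/\tau_n^2)\le C\,|z|^2\tau_n^{-2}$, where $C$ is uniform on compact subdisks. Its sum is therefore a holomorphic function whose exponential is $\varphi$ and which vanishes at $0$. Since $\varphi$ is zero-free on the disk (its zeros are $\pm\tau_n$ with $|\tau_n|\ge\tau_1$), this sum is exactly the branch of $\log\varphi$ normalized by $\log\varphi(0)=0$. That branch is the one used in Lemma~\ref{lem:even-coeff-invariance}.

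The second step is to interchange the $n$- and $k$-sums. This is justified by Tonelli, since
\[
\sum_n\sum_k\frac{|z|^{2k}}{k\,\tau_n^{2k}}=-\sum_n\log\Bigl(1-\frac{|z|^2}{\tau_n^2}\Bigr)<\infty,
\]
by the same bound. After the interchange, $\sum_n\tau_n^{-2k}=L_+(2k)$, with absolute convergence for $k\ge1$ because $2k>1$. This yields \eqref{eq:logphi-taylor}. The claim about the Taylor coefficients then follows by uniqueness of power series.

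The only mild point is matching the branch of $\log\varphi$ with the one used earlier. I would handle it by noting that both branches are holomorphic on the simply connected disk and vanish at $0$, so they coincide.
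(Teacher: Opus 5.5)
Your proof is correct and follows the same route as the paper: write $\log\varphi(z)=\sum_n\log(1-z^2/\tau_n^2)$, expand each logarithm as a power series, and interchange the sums so that $\sum_n\tau_n^{-2k}$ becomes $L_+(2k)$. You go further than the paper by justifying the steps it leaves implicit, namely the locally uniform convergence of the log-sum, the absolute convergence that licenses the interchange, and the branch normalization $\log\varphi(0)=0$.
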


\begin{proof}
From \cite[(1.1) and (1.5)]{BOCRS} (or directly from the product for $\varphi$) we have
\[
\varphi(z)=\prod_{n\ge1}\Bigl(1-\frac{z^2}{\tau_n^2}\Bigr),
\]
and $\sum_{n\ge1}\tau_n^{-2}<\infty$ implies the product converges locally uniformly.
For $|z|<\tau_1$ we may expand $\log(1-w)=-\sum_{k\ge1} w^k/k$ and sum termwise to obtain
\[
\log\varphi(z)=\sum_{n\ge1}\log\Bigl(1-\frac{z^2}{\tau_n^2}\Bigr)
=-\sum_{k\ge1}\frac{z^{2k}}{k}\sum_{n\ge1}\tau_n^{-2k},
\]
which is \eqref{eq:logphi-taylor}.
\end{proof}

\begin{lemma}[Asymptotic coefficients at $\infty$]\label{lem:as-at-infty}
Let $w\to\infty$ in a sector $\mathcal{S}$ such that $\lambda=-w^2$ stays in a sector $\Sigma_\delta$
as in Corollary~\ref{cor:logphi-coeff}. Then for every $N\in\N$,
\begin{equation}\label{eq:logphi-infty}
\log\varphi(w)=\textup{(terms involving $w$ and odd powers of $w^{-1}$)}\;-\;\sum_{m=1}^{N}\frac{L_+(-2m)}{m}\,w^{-2m}
\;+\;O(|w|^{-2N-1}).
\end{equation}
Consequently, for each $m\ge1$,
\[
[w^{-2m}]_{\mathrm{as}}\log\varphi(w)=-\frac{L_+(-2m)}{m}.
\]
\end{lemma}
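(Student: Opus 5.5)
This is essentially Corollary~\ref{cor:logphi-coeff} rewritten in terms of $L_+$, so the plan is to invoke that corollary and translate the zeta values. Corollary~\ref{cor:logphi-coeff}, applied with the same sector $\mathcal{S}$ (so that $\lambda=-w^2\in\Sigma_\delta$), gives for every $N\in\N$
\[
\log\varphi(w)=\textup{(terms involving $w$ and odd powers of $w^{-1}$)}-\sum_{m=1}^{N}\frac{Z(-m)}{m}\,w^{-2m}+O(|w|^{-2N-1}).
\]
That expansion was obtained by substituting $\lambda=-w^2$ into Theorem~\ref{thm:logW-asymp}, using $\varphi(w)=W(-w^2)$. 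The terms left unspecified are $\pi\lambda^{1/2}$, $c_0\log\lambda$, $C_W$ and the half-integer powers $\lambda^{-j}$ with $j\in\tfrac12+\Z_{\ge0}$. These become, respectively, a multiple of $w$, the term $-\log(-w^2)$ with $c_0=-1$ from Proposition~\ref{prop:first-heat-coeffs}, a constant, and odd powers of $w^{-1}$.

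So it only remains to identify $Z(-m)$ with $L_+(-2m)$. For $\Ree s>\tfrac12$ we have $Z(s)=\sum_n\tau_n^{-2s}=L_+(2s)$. Both sides are meromorphic on $\C$: $Z$ by Theorem~\ref{thm:Z-meromorphic}, and $L_+$ via this very identity, which is how the continuation of $L_+$ is defined at the start of this section. By the identity theorem, $Z(s)=L_+(2s)$ everywhere. By Theorem~\ref{thm:Z-negative}, $Z$ is holomorphic at $s=-m$, so $L_+(-2m)=Z(-m)$ is a genuine finite value. Substituting this into the display gives \eqref{eq:logphi-infty}.

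For the coefficient statement, I would use Remark~\ref{rem:as-infty} and the uniqueness of logarithmic Poincaré coefficients from Definition~\ref{def:poincare-coeff}. Setting $z=1/w$, the expansion has the allowed shape: one $\log z$ term, an $n_0=-1$ pole coming from $w$, the constant, the powers $z^{2m}$, and the odd powers $z^{2m+1}$. Hence $[w^{-2m}]_{\mathrm{as}}\log\varphi(w)=-L_+(-2m)/m$.

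The main point needing care is that the bundled terms contribute nothing to the coefficient of $w^{-2m}$. The $\log$ term and the branch constant affect only the $(\log z)^1$ and $z^0$ slots. The half-integer powers $\lambda^{-j}$ are, after fixing a consistent branch of $\sqrt{-w^2}$ on $\mathcal{S}$ (namely $\pm i w$), exactly odd powers of $w$, with no even-power component.
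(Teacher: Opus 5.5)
Your proposal is correct and takes the same route as the paper, whose proof simply invokes Corollary~\ref{cor:logphi-coeff} together with the identification $Z(-m)=L_+(-2m)$. Your additional remarks make explicit what the paper leaves implicit: the identity-theorem justification of $Z(s)=L_+(2s)$, and the check that the $w$-term, the logarithm, the constant and the odd powers of $w^{-1}$ cannot contribute to the coefficient of $w^{-2m}$.
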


\begin{proof}
This is precisely Corollary~\ref{cor:logphi-coeff}, noting that $Z(-m)=L_+(2(-m))=L_+(-2m)$.
\end{proof}

\begin{lemma}[Asymptotic coefficients of $\log\varphi(Tz)$ near $0$]\label{lem:as-Tz}
Let $\varepsilon\in(0,\pi/4)$ and $S_\varepsilon$ be as in Lemma~\ref{lem:even-coeff-invariance}.
Then for each integer $k\ge1$,
\begin{equation}\label{eq:coeff-logphiT}
[z^{2k}]_{\mathrm{as}}\log\varphi(Tz)=-\frac{L_+(-2k)}{k}\,(2\pi i C)^{2k}.
\end{equation}
\end{lemma}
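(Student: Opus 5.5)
The plan is to substitute $w=Tz=(2\pi i C z)^{-1}$ into the expansion at infinity of Lemma~\ref{lem:as-at-infty}, and then read off the coefficient of $z^{2k}$.

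\emph{Step 1: the sectors match.} For $z\in S_\varepsilon$ we have $\arg z\in(-\varepsilon,\varepsilon)$. Hence $\arg(Tz)=-\tfrac{\pi}{2}-\arg z$, so $w=Tz$ lies in the sector $|\arg w+\tfrac{\pi}{2}|<\varepsilon$. Since $\varepsilon<\pi/4$, this is admissible in Corollary~\ref{cor:logphi-coeff}: $\lambda=-w^2$ has $|\arg\lambda|<2\varepsilon<\tfrac{\pi}{2}$. Moreover $|w|=(2\pi C|z|)^{-1}\to\infty$ as $z\to0$.

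\emph{Step 2: substitute.} With $w^{-1}=2\pi i C z$ we get $w^{-2m}=(2\pi i C)^{2m}z^{2m}$. The error term $O(|w|^{-2N-1})$ becomes $O(|z|^{2N+1})$. The terms ``involving $w$ and odd powers of $w^{-1}$'' must be tracked explicitly through Theorem~\ref{thm:logW-asymp}; they are the following.
\begin{itemize}
\item The term $\pi\lambda^{1/2}$ is a constant multiple of $w^{\pm1}$, hence of $z^{-1}$.
\item The term $c_0\log\lambda=c_0\log(-w^2)$ becomes a constant plus $-2c_0\log z$, using the fixed branch on $S_\varepsilon$. Here one checks that the branch of $\log\lambda$ on $\Sigma_\delta$ differs from $-2\log z+\mathrm{const}$ by a constant only, since both are holomorphic logarithms on a connected sector.
\item The constant $C_W$ stays a constant.
\item Each $\lambda^{-j}$ with $j\in\tfrac12+\Z_{\ge0}$ is a constant times $w^{\mp 2j}$, an odd power, hence an odd power of $z$.
\end{itemize}
Thus $\log\varphi(Tz)$ has a logarithmic Poincar\'e expansion on $S_\varepsilon$ in the sense of Definition~\ref{def:poincare-coeff}. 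Its only $z^{2k}$ terms with $k\ge1$ and no logarithm come from $-\frac{L_+(-2m)}{m}(2\pi i C)^{2m}z^{2m}$ with $m=k$.

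\emph{Step 3: conclude.} By uniqueness of Poincar\'e coefficients, $[z^{2k}]_{\mathrm{as}}\log\varphi(Tz)=-\frac{L_+(-2k)}{k}(2\pi iC)^{2k}$, which is \eqref{eq:coeff-logphiT}. One point must be checked: the branch of $\log\varphi(w)$ used in Corollary~\ref{cor:logphi-coeff} has to agree, up to an additive constant, with the branch of $\log\varphi(Tz)$ used in Lemma~\ref{lem:even-coeff-invariance}. Both are holomorphic logarithms of the same zero-free function on the connected sector, because $\varphi$ has no zeros off the real axis and the sector avoids it. An additive constant affects only the $z^0$ coefficient, so it is harmless for $k\ge1$.

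\emph{Main obstacle.} The hardest step is the bookkeeping in Step 2. One must confirm that no even positive power of $z$ arises from the half-integer terms or from the logarithm and its branch choices. One must also confirm that the principal branches of $\lambda^{1/2}$ and $\log\lambda$ transform consistently into single-valued expressions in $z$ on $S_\varepsilon$. I would handle this by writing $\lambda=-w^2=(2\pi C z)^{-2}$, so that $\lambda^{\alpha}=(2\pi C)^{-2\alpha}z^{-2\alpha}$ on the sector. The direct check: $\arg\lambda=-2\arg z$ is small, so the principal branch equals $\exp(-2\alpha\log z)\cdot\mathrm{const}$.
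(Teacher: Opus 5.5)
Your proposal is correct and follows the paper's proof: substitute $w=Tz$ into the expansion at infinity of Lemma~\ref{lem:as-at-infty}, which applies because $\lambda=-w^2=(2\pi C z)^{-2}$ satisfies $|\arg\lambda|<2\varepsilon<\frac{\pi}{2}$, and then read off the coefficient using $w^{-2k}=(2\pi i C)^{2k}z^{2k}$. Your extra bookkeeping makes explicit what the paper's short proof leaves implicit: $\pi\lambda^{1/2}=\frac{1}{2Cz}$, $c_0\log\lambda$ contributes only $\log z$ and a constant, the half-integer terms give odd powers of $z$, and the two holomorphic logarithms of the zero-free function $\varphi(Tz)$ on $S_\varepsilon$ differ only by an additive constant.
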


\begin{proof}
We interpret asymptotic coefficients in the sense of Remark~\ref{rem:as-infty}. Logarithmic terms (coming from $\log(-w^2)$) only affect the constant term in $z$ and hence do not contribute to $[z^{2k}]_{\mathrm{as}}$ for $k\ge1$.

For $z\in S_\varepsilon$, we have $w=Tz=\frac{1}{2\pi i C z}$ with $|w|\to\infty$ and
$|\arg w+\frac{\pi}{2}|<\varepsilon$. Hence $-w^2$ lies in a sector $\Sigma_\delta$ around the positive real axis,
with $\delta=\frac{\pi}{2}-2\varepsilon>0$, so Lemma~\ref{lem:as-at-infty} applies to $\log\varphi(w)$.
Substituting $w=Tz$ in \eqref{eq:logphi-infty} and using $w^{-2k}=(2\pi i C)^{2k}z^{2k}$ yields
\eqref{eq:coeff-logphiT}.
\end{proof}

\begin{proof}[Proof of Theorem~\ref{thm:conj1}]
We first prove the identity for $k\ge1$.
By Lemma~\ref{lem:even-coeff-invariance}, for each $k\ge1$,
\[
[z^{2k}]_{\mathrm{as}}\log\varphi(Tz)=[z^{2k}]_{\mathrm{Tay}}\log\varphi(z).
\]
By Lemma~\ref{lem:tay-at-zero}, the right-hand side equals $-\frac{L_+(2k)}{k}$.
By Lemma~\ref{lem:as-Tz}, the left-hand side equals $-\frac{L_+(-2k)}{k}(2\pi i C)^{2k}$.
Equating these expressions and canceling $-1/k$ gives
\[
L_+(-2k)=\frac{L_+(2k)}{(2\pi i C)^{2k}}
\qquad(k\ge1).
\]
For $k=0$ the identity is tautological. For $k\le -1$, write $k=-m$ with $m\ge1$ and use the already proven
case $m\ge1$:
\[
L_+(2m)=\frac{L_+(-2m)}{(2\pi i C)^{-2m}}
\quad\Longleftrightarrow\quad
L_+(-2k)=\frac{L_+(2k)}{(2\pi i C)^{2k}}.
\]
This completes the proof for all $k\in\Z$.
\end{proof}

The only genuinely nonlocal input in the argument is the coefficient extraction at infinity (Lemma~\ref{lem:as-at-infty}), which in our setting follows from admissibility of $\lambda_n=\tau_n^2$ together with the Quine--Heydari--Song zeta-regularization calculus. All other steps are algebraic consequences of the functional equation \cite[(1.6)]{BOCRS} combined with a parity argument in a small sector at $0$.

\section{Admissibility of higher powers of $(\tau_n)_{n\geq 1}$}\label{sec:parity-dichotomy}

Let $(\tau_n)_{n\ge1}$ be the positive zeros from \cite{BOCRS} and set $a_n:=n+\tfrac12$.
For each integer $m\ge1$ define the heat trace
\[
\theta_m(t):=\sum_{n\ge1}e^{-\tau_n^m t}\qquad (t>0).
\]
In this section we will prove the following theorem.

\begin{theorem}[Parity dichotomy]\label{thm:parity-dichotomy}
Let $m\ge1$ be an integer.
\begin{enumerate}[label=\textup{(\alph*)},leftmargin=2.5em]
\item If $m$ is \emph{odd}, then the heat trace $\theta_m(t)$ contains a \emph{nonzero} term of the form
\[
\kappa_m\,t\log t,
\]
and hence $(\tau_n^m)_{n\ge1}$ is \emph{not} admissible in the sense of Quine--Heydari--Song.
In particular, $(\tau_n)_{n\ge1}$ (i.e.\ $m=1$) is not admissible, and neither is $(\tau_n^m)$ for any odd $m\ge3$.
\item If $m$ is \emph{even}, then $(\tau_n^m)_{n\ge1}$ is admissible in the sense of Quine--Heydari--Song.
\end{enumerate}
\end{theorem}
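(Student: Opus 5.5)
The plan is to work entirely on the Mellin side. For every $m\ge1$,
\[
\theta_m(t)=\frac{1}{2\pi i}\int_{\Ree s=c}\Gamma(s)\,t^{-s}\,L_+(ms)\,ds,\qquad c>1/m,
\]
since $\sum_n\tau_n^{-ms}=L_+(ms)$. The small-time expansion of $\theta_m$ is then read off from the poles of $\Gamma(s)L_+(ms)$ by shifting the contour to the left, exactly as in Proposition~\ref{prop:Theta-asymp}. A simple pole at $s_0$ contributes a pure power $t^{-s_0}$. A double pole contributes $t^{-s_0}\log t$.

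Everything therefore rests on one structural description of $L_+(w)$. I will write
\[
\tau_n^{-w}=a_n^{-w}\bigl(1-u_n\sigma(u_n)\bigr)^{-w},\qquad u_n=a_n^{-2}\le 4/9.
\]
I will check that $|u\sigma(u)|\le \tfrac12\cdot\tfrac23<1$ on $[0,4/9]$ (from $\rho(z)=z\sigma(z^2)\le\tfrac12$ for $z\le 2$). Then the binomial series applies. Since $u\sigma(u)$ is analytic on $|u|<4$, the expansion becomes
\[
(1-u\sigma(u))^{-w}=\sum_{j\ge0}P_j(w)u^j,
\]
where the $P_j$ are polynomials in $w$ with $P_0=1$ and $P_1(w)=w\,b_0$, and Cauchy estimates give $|P_j(w)|\ll_{K}(1+|w|)^{j}\rho^{-j}$ uniformly on vertical strips.

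Summing over $n$ and using Lemma~\ref{lem:hurwitz-half}, I expect
\[
L_+(w)=\sum_{j\ge0}P_j(w)\Bigl(\zeta(w+2j,\tfrac12)-2^{w+2j}\Bigr).
\]
To handle a fixed strip $\Ree w>-W$, I will keep only finitely many $j$ exactly and bound the tail geometrically, as in Proposition~\ref{prop:Fr-asymp}. This gives three facts. First, $L_+$ is meromorphic. Second, its only poles are simple, at $w=1-2j$, with residue $P_j(1-2j)$. Third, $L_+$ has polynomial growth on vertical lines. The growth bound is what justifies the contour shifts, again exactly as in Proposition~\ref{prop:Theta-asymp}.

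\textbf{Even $m=2p$.} Here $L_+(ms)=Z(ps)$. Its possible poles are at $ps=\tfrac12-k$, that is, $s=(\tfrac12-k)/p$. The poles of $\Gamma$ are at $s=-j$ with $j\in\Z_{\ge0}$. These two sets never meet, because $\tfrac12-k=-jp$ would equate a half-odd integer with an integer. Hence every pole of $\Gamma(s)Z(ps)$ is simple. Shifting the contour to $\Ree s=-K-\tfrac12$ gives a pure-power expansion with exponents in $\tfrac1{2p}\Z$ and remainder $O(t^{K+1/2})$. Conditions (1) and (3) of Definition~\ref{def:admissible} follow from $\tau_n^{2p}\ge n^{2p}$ (Lemma~\ref{lem:tau-lower}), which gives $\theta_m(t)=O(t^{-1/(2p)})$. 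This proves part (b).

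\textbf{Odd $m$.} Now $L_+(ms)$ has a pole at $ms=1-2j$, i.e.\ $s=(1-2j)/m$. Taking $j=(m+1)/2$, which is an integer because $m$ is odd, puts this pole at $s=-1$, where it collides with the pole of $\Gamma$. The resulting double pole produces a term
\[
\kappa_m\,t\log t,\qquad \kappa_m=\frac1m\operatorname{Res}_{w=-m}L_+(w)=\frac1m P_{(m+1)/2}(-m),
\]
up to a sign I will fix during the computation. One further check is needed: no other pole of $\Gamma(s)L_+(ms)$ on the real axis may cancel this term. Any other collision sits at a different integer $s=-j'$, so it yields $t^{j'}\log t$ with $j'\ne1$ and cannot cancel the $t\log t$ term.

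The main obstacle is showing $\kappa_m\neq0$. A direct evaluation of $P_{(m+1)/2}(-m)$ from the coefficients $b_j$ looks intractable. My first approach is to invoke the residue formula for $L_+$ at negative odd integers recorded in \cite[\S10.2]{BOCRS}, which expresses $\operatorname{Res}_{w=-m}L_+$ as a nonzero explicit constant (a power of $2\pi iC$ and elementary factors) times $L_-(m)$. Then it suffices to show $L_-(m)\ne0$. This follows from the alternating series test: $\tau_n^{-m}$ is strictly decreasing, so
\[
|L_-(m)|\ge\tau_1^{-m}-\tau_2^{-m}>0.
\]
If the BOCRS constant turned out to be awkward to pin down, my fallback is to compute the leading residue directly in the case $m=1$, where it equals $P_1(-1)=-b_0=-a_1$. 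This is nonzero provided the first coefficient $a_1$ of $\rho$ is positive, which can be read off from BOCRS. For general odd $m$ I would still rely on the $L_-(m)$ identification.
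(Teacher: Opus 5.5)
Your proposal is correct in substance, and it takes a genuinely different route from the paper.

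\textbf{How the two routes differ.} The paper argues on the time side. For odd $m$ it:
\begin{enumerate}
\item writes $\tau_n^m=b_n+d_m a_n^{-1}+O(a_n^{-3})$;
\item shows that $\sum_n e^{-b_nt}$ carries no logarithm;
\item extracts $t\log t$ from the harmonic sum $H_m(t)=\frac1m\log\frac1t+O(1)$;
\item proves $d_m\neq0$ by a Lagrange--B\"urmann computation with the BOCRS series $g$, using (6.2) and $L_-(-1)=-\frac{1}{4C}$.
\end{enumerate}
For even $m$ it redoes the Taylor-in-$t$ reduction with the normal form $\tau_n^{2p}=a_n^{2p}+a_n^{2p-2}q_p(u_n)$ and analyzes the sums $\Theta_{p,q}$ with $q\in2\Z$. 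You instead continue $L_+$ once and read every $\theta_m$ off the poles of $\Gamma(s)L_+(ms)$.

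The two computations meet at one point. Since $\tau(z)^m=z^{-m}\bigl(1-z^2\sigma(z^2)\bigr)^m$, your residue $P_{(m+1)/2}(-m)$ is exactly the paper's $d_m=[z^1]\tau(z)^m$. So citing the residue formula of \cite[\S10.2]{BOCRS} takes the place of Lemma~\ref{lem:a-1-coeff}, whose output (Corollary~\ref{cor:BOCRS-residue-from-heat}) is that same formula. Your alternating-series argument for $L_-(m)\neq0$ is the paper's.

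\textbf{What each route buys.} Yours gives:
\begin{enumerate}
\item a uniform treatment of all $m$, in which the dichotomy becomes the question whether $(1-2j)/m$ can be a nonpositive integer;
\item the complete expansion in both cases; for odd $m$ this includes every $t^k\log t$ with $k$ odd, governed by the residue of $L_+$ at $w=-km$, not just $t\log t$ modulo $O(t)$;
\item the relation $\kappa_m=\frac1m\operatorname{Res}_{w=-m}L_+$ of Theorem~\ref{thm:Zm-odd-pole} built in;
\item a reduction of the even case to the single function $Z$, with no remainder estimates involving growing weights.
\end{enumerate}
The price is that you need vertical growth bounds for $L_+$ itself, not only for the Hurwitz zeta function.

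\textbf{The growth bound fails as written.} Proposition~\ref{prop:Fr-asymp} could sum its tail geometrically because $c_{r,m}$ does not depend on $s$; your $P_j(w)$ does. Set $M:=\sup_{|u|=\varrho}|\log(1-u\sigma(u))|$. Cauchy gives $|P_j(w)|\le\varrho^{-j}(1+M|w|)^j$, and for large $j$ one has $|\zeta(w+2j,\tfrac12)-2^{w+2j}|\ll a_1^{-\operatorname{Re}w}(4/9)^j$. So your tail is dominated by $\sum_j\bigl(4(1+M|w|)/(9\varrho)\bigr)^j$, which diverges once $|\operatorname{Im}w|$ is large. The other Cauchy bound, $\varrho^{-j}e^{M|w|}$, restores convergence but gives only exponential growth. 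After $w=ms$ that growth need not be beaten by the decay of $\Gamma(s)$.

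\textbf{The repair.} Bound the remainder function rather than its coefficients. For real $u\in[0,4/9]$:
\begin{enumerate}
\item $|(1-u\sigma(u))^{-w}|=(1-u\sigma(u))^{-\operatorname{Re}w}\le(3/2)^{|\operatorname{Re}w|}$;
\item the $J$-th $u$-derivative of $(1-u\sigma(u))^{-w}$ is this function times a polynomial of degree at most $J$ in $w$, with coefficients bounded on $[0,4/9]$;
\item Taylor's theorem therefore gives $\bigl|(1-u\sigma(u))^{-w}-\sum_{j<J}P_j(w)u^j\bigr|\ll_J(1+|w|)^Ju^J$, uniformly for $\operatorname{Re}w$ in compact sets.
\end{enumerate}
Evaluating at $u=u_n$ and summing against $a_n^{-w}$ gives a tail holomorphic on $\operatorname{Re}w>1-2J$ with polynomial growth on vertical lines. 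With that in place, your contour shifts go through as described.
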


\subsection{Conditions (1) and (3) for all $m$}

By \cite[Theorem~1.2]{BOCRS} there exists an odd power series $\rho$ with nonnegative coefficients,
radius $2$, and $\rho(2)=\tfrac12$ such that
\begin{equation}\label{eq:tau-rho-parity}
\tau_n=a_n-\rho(a_n^{-1}),\qquad n\ge1.
\end{equation}
Since $a_n^{-1}\le 2/3<2$ and $\rho(z)=\sum_{j\ge1}\alpha_j z^j$ has nonnegative coefficients and
converges on $|z|\le 2$ \cite[Thm.~6.1]{BOCRS}, the partial sums are increasing on $[0,2]$
and hence $\rho$ is increasing there.  Therefore
\[
0\le \rho(a_n^{-1})\le \rho(2)=\tfrac12,
\]
so \eqref{eq:tau-rho-parity} yields $\tau_n\ge a_n-\tfrac12=n$ for all $n\ge1$.

In particular, for every integer $m\ge1$ we have $\tau_n^m\ge n^m$, hence $\theta_m(t)$ converges
absolutely for each $t>0$ and satisfies $\theta_m(t)\ll t^{-1/m}$ as $t\to0^+$.
Thus QHS admissibility conditions \textup{(1)} and \textup{(3)} hold for $(\tau_n^m)_{n\ge1}$ for every $m$:
condition \textup{(1)} is immediate since $\tau_n^m>0$, and condition \textup{(3)} follows because
$t^\beta\theta_m(t)\to0$ as $t\to0^+$ for any $\beta>1/m$ (e.g.\ $\beta=2$ works uniformly for all $m\ge1$).
Hence only the \emph{pure-power} expansion requirement (QHS condition \textup{(2)}) remains to be checked.

\subsection{Odd $m$: an explicit $a_n^{-1}$ term in $\tau_n^m$}

Write $m=2\ell+1$ with $\ell\ge0$.
We show that $\tau_n^m$ has an $a_n^{-1}$ term with a coefficient that can be computed explicitly
from the BOCRS Dirichlet series
\[
L_-(s):=\sum_{n\ge1}\frac{(-1)^n}{\tau_n^s}\qquad (\Re s>1),
\]
which is denoted $L_\tau(s)$ in earlier parts of \cite{BOCRS} and is entire in $s$ by \cite[\S10.2]{BOCRS}.

\begin{lemma}[Coefficient of $a_n^{-1}$ in $\tau_n^{2\ell+1}$]\label{lem:a-1-coeff}
Let $m=2\ell+1$ be odd. Then as $n\to\infty$ one has an asymptotic expansion
\begin{equation}\label{eq:tau-m-expansion}
\tau_n^m
= a_n^m+\sum_{j=1}^{\ell} c_{m,j}\,a_n^{m-2j}\;+\; d_m\,a_n^{-1}\;+\;O(a_n^{-3}),
\end{equation}
for some real constants $c_{m,j}$, where the coefficient $d_m$ is given by the closed formula
\begin{equation}\label{eq:d_m-formula}
d_m
=(-1)^{\ell}\,\frac{L_-(m)}{2^{m-1}\,\pi^{m+1}\,C^{m}}.
\end{equation}
In particular, $d_m\neq0$ for every odd $m\ge1$.
\end{lemma}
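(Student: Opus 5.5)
We will read the expansion off the normal form of Theorem~\ref{thm:rho}. We will then identify the coefficient of $a_n^{-1}$ with a residue of $L_+$ at $s=-m$, and take that residue from \cite[\S10.2]{BOCRS}.

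\textbf{Step 1 (the expansion).} By \eqref{eq:rho-sigma}, $\tau_n=a_n\bigl(1-u_n\sigma(u_n)\bigr)$ with $u_n=a_n^{-2}$. Hence
\[
\tau_n^m=a_n^m\,\bigl(1-u_n\sigma(u_n)\bigr)^m=\sum_{k\ge0}e_k\,a_n^{m-2k},\qquad e_k:=[u^k]\bigl(1-u\sigma(u)\bigr)^m .
\]
The power series $(1-u\sigma(u))^m$ converges on $|u|\le 4$, and $u_n\le 4/9$. So this series converges absolutely, and its tail from $k=\ell+2$ onward is $O(a_n^{m-2\ell-4})=O(a_n^{-3})$ uniformly in $n$. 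Since $m$ is odd, only odd powers of $a_n$ occur. The term with $a_n^{m-2k}=a_n^{-1}$ is exactly $k=\ell+1$. This gives \eqref{eq:tau-m-expansion} with $c_{m,j}=e_j$ (real, since $\sigma$ has real coefficients) and
\[
d_m=e_{\ell+1}=[u^{\ell+1}]\bigl(1-u\sigma(u)\bigr)^m .
\]

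\textbf{Step 2 ($d_m$ as a residue of $L_+$).} For complex $s$ we similarly write $\tau_n^{-s}=\sum_{k\ge0}e_k(s)\,a_n^{-s-2k}$, where $e_k(s):=[u^k](1-u\sigma(u))^{-s}$. Each $e_k(s)$ is a polynomial in $s$, and Cauchy estimates on $|u|=\rho$ with $4/9<\rho<4$ give $|e_k(s)|\ll_{K}\rho^{-k}$ locally uniformly in $s$ on compact sets $K$, as long as $1-u\sigma(u)$ has no zeros for $|u|\le\rho$ (shrinking $\rho$ if necessary). Exactly as in Proposition~\ref{prop:Fr-Theta}, we sum over $n$ first for $\Re s$ large and get
\[
L_+(s)=\sum_{k\ge0}e_k(s)\,S(s+2k),\qquad S(w):=\zeta(w,\tfrac12)-2^{w}.
\]
Here $S(w)$ has a single simple pole, at $w=1$, with residue $1$. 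The tail $k>K$ converges uniformly on $\Re s>1-2K$ by the geometric bound of \eqref{eq:Sm-geometric}. Consequently the pole of $L_+$ at $s=-m$ comes only from $k=\ell+1$, and
\[
\operatorname{Res}_{s=-m}L_+(s)=e_{\ell+1}(-m)=[u^{\ell+1}]\bigl(1-u\sigma(u)\bigr)^{m}=d_m .
\]

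\textbf{Step 3 (closed formula and nonvanishing).} We now invoke the residue formula for $L_+$ at negative odd integers recorded in \cite[\S10.2]{BOCRS}. It is derived from the functional equation \cite[(1.6)]{BOCRS} and expresses $\operatorname{Res}_{s=-m}L_+$ through the entire function $L_-$ at $s=m$. Matching its normalization should give \eqref{eq:d_m-formula}. I expect this matching to be the main obstacle. The residue is stated there in BOCRS's own normalization (their $L_\tau$, and possibly the variable $s/2$ or the combination $\Phi(z)\Phi(-z)$), so the powers of $2$, $\pi$, $C$ and the sign $(-1)^\ell$ must be tracked carefully. As a cross-check I would verify the case $m=1$ directly: $d_1=-b_0=-a_1$, which must equal $L_-(1)/(\pi^2C)$.

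For nonvanishing, $L_-(m)=\sum_{n\ge1}(-1)^n\tau_n^{-m}$ is an alternating series whose terms decrease strictly in absolute value, since $0<\tau_1<\tau_2<\cdots$. Its sum therefore has the sign of the first term, so $L_-(m)\le -\tau_1^{-m}+\tau_2^{-m}<0$. Hence $d_m\ne0$.
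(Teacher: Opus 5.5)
\textbf{Gap: the closed formula is never established.} Your Steps 1 and 2 are correct, but Step 3 carries the entire content of \eqref{eq:d_m-formula}, and it is not carried out. You invoke a residue formula from \cite[\S10.2]{BOCRS} without stating it, and you only say the normalization ``should'' match. Your $m=1$ ``cross-check'' $-b_0=L_-(1)/(\pi^2C)$ is simply the $m=1$ case of the claim, not a verification. As written, nothing in your argument links $d_m=[u^{\ell+1}](1-u\sigma(u))^m$ to $L_-(m)$, $\pi$ or $C$. So neither \eqref{eq:d_m-formula} nor, consequently, $d_m\neq0$ is proved.

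You can close this in one of two ways.
\begin{enumerate}
\item[(i)] Quote the identity precisely: $\Res_{s=-m}L_+(s)=\frac{2i}{\pi}\,L_-(m)\,(2\pi iC)^{-m}$, the form recorded in Corollary~\ref{cor:BOCRS-residue-from-heat}. The matching you call the main obstacle is then one line: $\frac{2i}{\pi}(2\pi iC)^{-m}=i^{1-m}2^{1-m}\pi^{-m-1}C^{-m}$, and $i^{1-m}=i^{-2\ell}=(-1)^\ell$.
\item[(ii)] Compute $d_m$ without that citation, as the paper does. With $\tau(z)=z^{-1}-\rho(z)$ one has $d_m=[z^1]\tau(z)^m$. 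The identity $g\bigl(1/(2\pi iC\,\tau(z))\bigr)=1/z$ of \cite[(6.2)]{BOCRS}, which defines $\rho$, says that $w(z)=1/(2\pi iC\,\tau(z))$ is the local inverse of $f=1/g$. Here $f$ is holomorphic with $f'(0)=2\pi iC$, using $L_-(-1)=-1/(4C)$. Lagrange--B\"urmann then gives $d_m=-m(2\pi iC)^{-m}[w^m]g(w)=-\frac{2L_-(m)}{\pi i\,(2\pi iC)^m}$.
\end{enumerate}

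\textbf{Comparison with the paper's route.} Your Hurwitz decomposition $L_+(s)=\sum_k e_k(s)S(s+2k)$ is a genuinely different and useful ingredient. It is rigorous: the nonvanishing of $1-u\sigma(u)$ that you hedge on is automatic, since $|u\sigma(u)|\le\varrho\,\sigma(\varrho)<4\sigma(4)=2\rho(2)=1$ for $|u|\le\varrho<4$. It also proves $\Res_{s=-m}L_+=d_m$ directly and elementarily. The paper reaches that identity only through the heat-trace analysis of Section~\ref{sec:parity-dichotomy} and Theorem~\ref{thm:Zm-odd-pole}.

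The trade-off is with fix (i). It imports the BOCRS residue value as input, so the paper's later Corollary~\ref{cor:BOCRS-residue-from-heat} would become a tautology rather than an independent derivation. The paper's Lagrange-inversion route uses only the equation defining $\rho$ and keeps that corollary meaningful. Your argument that $L_-(m)<0$ (alternating series with strictly decreasing terms) is the same as the paper's and is fine once the formula is in place.
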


\begin{proof}
Define $\tau(z):=z^{-1}-\rho(z)$, so that \eqref{eq:tau-rho-parity} reads $\tau_n=\tau(a_n^{-1})$.
Then $\tau(z)^m$ is a Laurent series at $z=0$ with leading term $z^{-m}$, and
\[
\tau(a_n^{-1})^m = a_n^m+\cdots + \bigl([z^1]\tau(z)^m\bigr)a_n^{-1} + O(a_n^{-3}).
\]
Thus $d_m=[z^1]\tau(z)^m$.

To compute this coefficient, we use the auxiliary series $g$ introduced in \cite[\S6]{BOCRS}:
\[
g(w):=\frac{1}{\pi i}\sum_{k\ge0}\frac{2\,L_-(2k-1)}{2k-1}\,w^{2k-1},
\qquad 0<|w|<\tau_1,
\]
and the defining identity for $\rho$ (equation (6.2) in \cite{BOCRS}):
\begin{equation}\label{eq:BOCRS-6.2}
g\!\left(\frac{1}{2\pi i C\,\tau(z)}\right)=\frac{1}{z}\qquad (|z|\ \text{small}).
\end{equation}
\medskip
\noindent\emph{Justification that $f=1/g$ is holomorphic and locally invertible at $0$.}
The $k=0$ term in the definition of $g$ involves $L_-(-1)$, which is defined by analytic continuation
and satisfies $L_-(-1)=-\frac{1}{4C}$ \cite[\S10.2, eq.~(10.1)]{BOCRS}.  Hence
\[
g(w)=\frac{1}{\pi i}\cdot \frac{2L_-(-1)}{-1}\,w^{-1}+O(w)
=\frac{1}{2\pi i C}\,w^{-1}+O(w)\qquad(w\to0),
\]
so $f(w):=1/g(w)$ is holomorphic near $0$ with $f(0)=0$ and $f'(0)=2\pi i C\neq0$.

Set
\[
w(z):=\frac{1}{2\pi i C\,\tau(z)}.
\]
Then \eqref{eq:BOCRS-6.2} becomes $g(w(z))=z^{-1}$. Equivalently, writing $f(w):=1/g(w)$, we have
\begin{equation}\label{eq:inverse-relation}
f(w(z))=z,
\end{equation}
so $w(z)$ is the local inverse at $0$ of $f$.

Now
\[
\tau(z)^m = \frac{1}{(2\pi i C)^m}\,w(z)^{-m},
\]
so $d_m = [z^1]\tau(z)^m = (2\pi i C)^{-m}\,[z^1]w(z)^{-m}$.
\medskip
\noindent\emph{Computation of $d_m$ via residues.}
Since $f$ is holomorphic near $0$ with $f(0)=0$ and $f'(0)\neq0$, the Lagrange--B\"urmann
(residue) formula gives, for $\psi(w)=w^{-m}$,
\[
[z^1]\psi(w(z))=\Res_{w=0}\frac{\psi'(w)}{f(w)}\,dw.
\]
Applying this with $\psi(w)=w^{-m}$ (so $\psi'(w)=-m w^{-m-1}$) yields
\[
[z^1]\,w(z)^{-m}
=\Res_{w=0}\left(-m w^{-m-1}\right)\frac{dw}{f(w)}
=\Res_{w=0}\left(-m w^{-m-1}\right)g(w)\,dw
= -m\,[w^m]\,g(w).
\]
Hence
\[
d_m
=(2\pi i C)^{-m}[z^1]\,w(z)^{-m}
= -\frac{m}{(2\pi i C)^m}\,[w^m]g(w).
\]
Since $g(w)$ is an odd Laurent series, $[w^m]g(w)=0$ for even $m$, and for odd $m$ we have
\[
[w^m]g(w)=\frac{1}{\pi i}\cdot\frac{2\,L_-(m)}{m}.
\]
Substituting gives
\[
d_m
= -\frac{m}{(2\pi i C)^m}\cdot \frac{1}{\pi i}\cdot \frac{2 L_-(m)}{m}
= -\frac{2L_-(m)}{\pi i}\cdot\frac{1}{(2\pi i C)^m},
\]
which simplifies to \eqref{eq:d_m-formula} because $m=2\ell+1$ implies $i^{m+1}=i^{2\ell+2}=(-1)^{\ell+1}$.

Finally, $L_-(m)\neq0$ for every integer $m\ge1$.
Indeed, $\tau_n$ is strictly increasing because $a_n$ increases and $\rho(a_n^{-1})$
decreases (since $a_n^{-1}$ decreases and $\rho$ is increasing).
Hence $\tau_n^{-m}$ decreases to $0$.
For $m=1$ the series defining $L_-(1)=\sum_{n\ge1}(-1)^n\tau_n^{-1}$ converges by the alternating
series test, and for $m\ge2$ it is absolutely convergent.
In all cases $L_-(m)$ has the sign of its first term, so $L_-(m)<0$ and in particular $L_-(m)\neq0$.

\end{proof}

\subsection{Odd $m$: a harmonic weighted sum produces a logarithm}

Define the harmonic weight sum
\[
H_m(t):=\sum_{n\ge1}a_n^{-1}e^{-a_n^m t}.
\]

\begin{lemma}[Logarithmic asymptotic]\label{lem:Hm-log}
For each integer $m\ge1$,
\[
H_m(t):=\sum_{n\ge1}a_n^{-1}e^{-a_n^m t}
=\frac{1}{m}\log\frac{1}{t}+O(1)\qquad(t\to0^+).
\]
\end{lemma}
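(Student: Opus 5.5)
We mirror the Mellin analysis of Proposition~\ref{prop:Theta-asymp}, replacing the Gaussian weight $\e^{-a_n^2t}$ by $\e^{-a_n^m t}$. For $c>0$, Mellin inversion $\e^{-y}=\frac{1}{2\pi i}\int_{\Ree s=c}\Gamma(s)y^{-s}\,ds$ applied with $y=a_n^m t$, followed by Fubini (justified since $\sum_n a_n^{-1-mc}<\infty$), gives
\[
H_m(t)=\frac{1}{2\pi i}\int_{\Ree s=c}\Gamma(s)\,t^{-s}\,S(ms)\,ds,\qquad S(p):=\sum_{n\ge1}a_n^{-1-p}=(2^{1+p}-1)\zeta(1+p)-2^{1+p},
\]
where the second expression for $S$ comes from Lemma~\ref{lem:hurwitz-half}.

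The key observation is that the poles of $\Gamma(s)$ at $s=0$ and of $\zeta(1+ms)$ at $s=0$ collide. The point $s=0$ is therefore a \emph{double} pole of the integrand, and this is exactly where the logarithm comes from. Near $s=0$ we have $\Gamma(s)=s^{-1}-\gamma+O(s)$ and $t^{-s}=1-s\log t+O(s^2)$. We also have $S(ms)=\frac{1}{ms}+\kappa+O(s)$, because $(2^{1+p}-1)\zeta(1+p)=(1+2p\log2+\dots)(p^{-1}+\gamma+\dots)$ has residue $1$ at $p=0$. Multiplying these expansions, the coefficient of $s^{-1}$ is $-\frac{1}{m}\log t+\text{const}$, so the residue at $s=0$ equals $\frac1m\log\frac1t+O(1)$.

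We then shift the contour from $\Ree s=c$ to $\Ree s=-\tfrac12$, say. The only pole crossed is $s=0$: the function $\Gamma$ has no other poles in $-\tfrac12<\Ree s\le c$, and $\zeta(1+ms)$ has its only pole at $s=0$. On the new line, $|t^{-s}|=t^{1/2}$. Stirling gives exponential decay of $\Gamma$ along the line, while $S(ms)$ grows at most polynomially in $|\Im s|$ by standard convexity bounds for $\zeta$ in vertical strips, as already used in Proposition~\ref{prop:Theta-asymp}. Hence the shifted integral is $O(t^{1/2})$, and the horizontal connecting segments vanish in the limit for the same reason. This gives $H_m(t)=\frac1m\log\frac1t+O(1)$.

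The main obstacle is the careful computation of the double-pole residue, in particular checking that the coefficient of $\log(1/t)$ is exactly $1/m$ and not altered by the factor $2^{1+p}-1$. This holds because that factor equals $1$ at $p=0$, so only the residue of $\zeta$ survives, rescaled by $1/m$ through $p=ms$. As an elementary cross-check, compare the sum with $\int_{1}^{\infty}x^{-1}\e^{-x^m t}\,dx=\frac1m E_1(t)=\frac1m\log\frac1t+O(1)$; monotonicity of $x^{-1}\e^{-x^mt}$ bounds the sum-minus-integral error by $O(1)$, which gives an alternative proof if the contour argument becomes cumbersome.
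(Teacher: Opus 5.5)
Your proposal is correct and follows essentially the same route as the paper: Mellin inversion with $S(ms)=\zeta(1+ms,\tfrac12)-2^{1+ms}$, the double pole at $s=0$ whose residue is $\frac1m\log\frac1t+O(1)$, and a contour shift to $\Re s=-\tfrac12$ with an $O(t^{1/2})$ remainder. Your closing comparison of the sum with $\int_1^\infty x^{-1}e^{-x^mt}\,dx=\frac1m E_1(t)$, using that $x^{-1}e^{-x^mt}$ is decreasing, is also a valid self-contained proof and is more elementary than the contour argument.
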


\begin{proof}
Fix $c>0$. By Mellin inversion,
\[
e^{-y}=\frac{1}{2\pi i}\int_{\Re s=c}\Gamma(s)\,y^{-s}\,ds\qquad(y>0).
\]
With $y=a_n^m t$ and summing over $n\ge1$ (Fubini is justified since $\sum_{n\ge1}a_n^{-1-ms}$
converges absolutely for $\Re s>0$), we obtain
\[
H_m(t)=\frac{1}{2\pi i}\int_{\Re s=c}\Gamma(s)\,t^{-s}\,S_m(s)\,ds,
\qquad
S_m(s):=\sum_{n\ge1}a_n^{-1-ms}.
\]
Using $\sum_{n\ge0}(n+\tfrac12)^{-p}=\zeta(p,\tfrac12)$ and subtracting the $n=0$ term,
we have the meromorphic identity
\[
S_m(s)=\zeta(1+ms,\tfrac12)-2^{1+ms}.
\]
Since $\zeta(\cdot,\tfrac12)$ has a simple pole at $1$ with residue $1$, it follows that
\[
S_m(s)=\frac{1}{ms}+O(1)\qquad(s\to0),
\]
while $\Gamma(s)=\frac1s+O(1)$ and $t^{-s}=1-s\log t+O(s^2)$. Hence
\[
\Gamma(s)t^{-s}S_m(s)=\frac{1}{m}\frac{1}{s^2}-\frac{1}{m}\frac{\log t}{s}+O(1),
\]
so the coefficient of $1/s$ equals $-\frac{1}{m}\log t+O(1)$.

Now shift the contour from $\Re s=c$ to $\Re s=-\tfrac12$. The only pole crossed is at $s=0$,
and the integral on $\Re s=-\tfrac12$ is $O(t^{1/2})$ by Stirling's bound for $\Gamma$
and polynomial growth of $\zeta$ in vertical strips. Therefore,
\[
H_m(t)= -\frac{1}{m}\log t + O(1)=\frac{1}{m}\log\frac{1}{t}+O(1),
\]
as claimed.
\end{proof}

\subsection{Odd $m$: a nonzero $t\log t$ term in $\theta_m(t)$}

\begin{lemma}[No logarithm for the polynomial part]\label{lem:bn-no-log}
Let $m=2\ell+1$ be odd and define
\[
b_n:=a_n^m+\sum_{j=1}^{\ell} c_{m,j}\,a_n^{m-2j}.
\]
Then the heat trace
\[
\vartheta_m(t):=\sum_{n\ge1}e^{-b_n t}
\]
admits a full asymptotic expansion as $t\to0^+$ in \emph{pure powers} (in particular, it has no
$t\log t$ term).
\end{lemma}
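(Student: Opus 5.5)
The plan is to mimic the Section~\ref{sec:reduction} strategy, but with the odd power $a_n^m$ as the model in place of $a_n^2$. Factor
\[
b_n=a_n^m\,P(a_n^{-2}),\qquad P(u):=1+\sum_{j=1}^{\ell}c_{m,j}u^{j},
\]
where $P$ is a polynomial with $P(0)=1$. The natural device is to write $e^{-b_n t}=e^{-a_n^m t}\,e^{-t a_n^m(P(u_n)-1)}$. The difficulty is that the correction $t a_n^m(P(u_n)-1)$ is of size $t a_n^{m-2}$, which is not uniformly small.

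Instead of a Taylor expansion in $t$, I will work directly in Mellin space. For $c>1/m$,
\[
\vartheta_m(t)=\frac{1}{2\pi i}\int_{\Re s=c}\Gamma(s)\,t^{-s}\,D(s)\,ds,\qquad D(s):=\sum_{n\ge1}b_n^{-s}.
\]
This is valid once $b_n>0$ for all $n$. That holds for $n\ge n_0$, and any finitely many exceptional terms contribute an entire function of $t$, hence pure powers; they can be discarded.

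Next I expand the Dirichlet series binomially. Since $b_n^{-s}=a_n^{-ms}P(u_n)^{-s}$ and $P(u)^{-s}=\sum_{k\ge0}p_k(s)u^k$ for $|u|$ small, with $p_k$ polynomials in $s$ of degree $\le k$, we get
\[
D(s)=\sum_{k=0}^{K}p_k(s)\bigl(\zeta(ms+2k,\tfrac12)-2^{ms+2k}\bigr)+E_K(s).
\]
The remainder $E_K(s)$ is holomorphic in $\Re s>(1-2K-2)/m$ with polynomial growth in $|\Im s|$. This follows from the Taylor remainder bound $|P(u)^{-s}-\sum_{k\le K}p_k(s)u^k|\ll (1+|s|)^{K+1}u^{K+1}$ for $u\le u_*$, after restricting to $n\ge n_0$ so that $u_n\le u_*$.

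The key point is the location of the poles. The $k$-th Hurwitz term has a simple pole only at $ms+2k=1$, i.e.\ at $s=(1-2k)/m$, with residue $p_k((1-2k)/m)/m$. A log term could arise only from a double pole, which requires a collision with a pole $s=-j$ of $\Gamma$. Such a collision means $1-2k=-jm$, i.e.\ $jm=2k-1$, which is odd and therefore forces $j$ odd. I must therefore show that at those points the residue $p_k(-j)$ vanishes. This holds because $P(u)^{-s}$ at $s=-j$ is the polynomial $P(u)^{j}$ of degree $j\ell$, and $k=(jm+1)/2=j\ell+(j+1)/2>j\ell$, so the coefficient $p_k(-j)$ vanishes. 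The pole of the Hurwitz factor is thus cancelled, and every pole of $\Gamma(s)t^{-s}D(s)$ is simple. This parity bookkeeping, which is exactly where the structure "$b_n$ involves only $a_n^{m-2j}$" is used, is the step I expect to require the most care. It relies on the identity $p_k(-j)=[u^k]P(u)^j$, which holds because the $p_k$ are polynomial in $s$.

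Finally, I shift the contour to $\Re s=-K'$ with $K'$ arbitrary, choosing $K$ large so that $E_K$ is holomorphic there. The residues are simple, giving pure powers $t^{(2k-1)/m}$ and $t^{j}$. The shifted integral is $O(t^{K'})$, using Stirling decay of $\Gamma$ against the polynomial growth of $\zeta$ and $p_k$; poles lying on the line are handled as in Remark~\ref{rem:contour-shift}. This yields a full pure-power expansion with exponents in $\frac1m\Z$, and in particular no $t\log t$ term.
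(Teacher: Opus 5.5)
Your proof is correct, but it takes a different route from the paper.

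\textbf{The paper's route.} The paper stays on the heat side. It writes $b_n=a_n^m+a_n^{m-2}q(u_n)$, where $q(u)=\sum_{j=1}^{\ell}c_{m,j}u^{j-1}$ is a polynomial of degree $\ell-1$, and Taylor-expands $e^{-t a_n^{m-2}q(u_n)}$ in $t$. The fact that this correction is not uniformly small is not an obstacle: it is handled as in the even-power Step~1. For large $n$ one has $e^{-a_n^m t+t|A_n|}\le e^{-a_n^m t/2}$, and an integral comparison then makes the remainder an arbitrarily high power of $t$. Every weight produced has exponent $(m-2)r-2k\ge r\ge 0$. So only sums $\sum_n a_n^{Q}e^{-a_n^m t}$ with $Q\in\Z_{\ge0}$ occur, whose Hurwitz pole lies at $s=(Q+1)/m>0$. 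Collisions with the poles of $\Gamma$ therefore never arise.

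\textbf{Your route.} You work with the Dirichlet series $\sum_n b_n^{-s}$ directly. You allow the Hurwitz poles at $s=(1-2k)/m$ to land on $s=-j$, and you show they are cancelled because $p_k(-j)=[u^k]P(u)^j=0$ for $k=j\ell+\tfrac{j+1}{2}>j\ell$.

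\textbf{Comparison.} The paper's version needs no estimates uniform in $s$, and non-collision is automatic. Yours gives the meromorphic structure of $\sum_n b_n^{-s}$ outright, with simple poles at $(1-2k)/m$ and residues $p_k(\tfrac{1-2k}{m})/m$. It also isolates the algebraic reason logarithms are absent. Both arguments use that $P$ is a polynomial of degree $\ell$. For a non-polynomial analytic $P$, your residues $[u^k]P(u)^j$ would generically be nonzero, which is exactly the mechanism behind the $t\log t$ term for $\tau_n^m$ itself.

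\textbf{Two points to tighten.}
\begin{enumerate}
\item Derive the bound $(1+|s|)^{K+1}u^{K+1}$ from the real-variable Taylor formula with integral remainder on $[0,u]$, using $|P(v)^{-s}|=P(v)^{-\Re s}$ for real $v$. A Cauchy estimate on a fixed complex circle only gives growth $e^{O(|s|)}$.
\item After discarding $n<n_0$, the Hurwitz terms should subtract $\sum_{n<n_0}a_n^{-ms-2k}$ rather than just $2^{ms+2k}$. This is an entire correction of polynomial growth on vertical lines, so it changes nothing.
\end{enumerate}
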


\begin{proof}
Write $u_n:=a_n^{-2}$ and set
\[
F(u):=1+\sum_{j=1}^{\ell}c_{m,j}u^j,
\qquad\text{so that}\qquad
b_n=a_n^m F(u_n)=a_n^m+a_n^{m-2}q(u_n),
\]
where $q(u):=(F(u)-1)/u=\sum_{j=1}^{\ell}c_{m,j}u^{j-1}$ is a polynomial (hence analytic on $|u|<\infty$).
Since $u_n\le 4/9$, $q$ is bounded on $[0,4/9]$.

Now expand
\[
\vartheta_m(t)=\sum_{n\ge1}e^{-a_n^m t}\,e^{-t\,a_n^{m-2}q(u_n)}
\]
by a Taylor expansion in $t$ with a summable remainder (as in the even-power argument):
for each fixed $R\ge0$,
\[
e^{-t\,a_n^{m-2}q(u_n)}=\sum_{r=0}^{R}\frac{(-t)^r}{r!}\,a_n^{(m-2)r}q(u_n)^r+\text{(remainder)}.
\]
Next expand $q(u)^r=\sum_{k=0}^{r(\ell-1)} c_{r,k}u^k$ and note that every resulting weight exponent
\[
(m-2)r-2k \ \ge\ (m-2)r-2r(\ell-1)=r\ \ge\ 0.
\]
Hence $\vartheta_m(t)$ is a finite linear combination (up to an arbitrarily small remainder in
powers of $t$) of sums of the form
\[
\sum_{n\ge1}a_n^{q}\,e^{-a_n^m t}\qquad(q\in\Z_{\ge0}).
\]
For such $q$, Mellin inversion shows that the only additional pole coming from the Hurwitz zeta factor
occurs at $s=(q+1)/m>0$ and therefore cannot collide with the poles of $\Gamma(s)$ at $0,-1,-2,\dots$.
Consequently the contour shift produces a full small-$t$ expansion in pure powers and no logarithms.
\end{proof}

\begin{proposition}[Odd powers have a logarithmic term]\label{prop:odd-log}
Let $m$ be odd. Then as $t\to0^+$,
\[
\theta_m(t)=\cdots + \frac{d_m}{m}\,t\log t + O(t),
\]
where $d_m\neq0$ is the coefficient from Lemma~\ref{lem:a-1-coeff}. In particular, $\theta_m(t)$ cannot have
a full asymptotic expansion in pure powers, so $(\tau_n^m)$ is not admissible in the sense of \cite{QHS}.
\end{proposition}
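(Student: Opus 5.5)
The plan is to compare $\theta_m$ with the model trace $\vartheta_m(t)=\sum_{n\ge1}e^{-b_nt}$ of Lemma~\ref{lem:bn-no-log}. By Lemma~\ref{lem:a-1-coeff} we write $\tau_n^m=b_n+d_m a_n^{-1}+\varepsilon_n$ with $\varepsilon_n=O(a_n^{-3})$ uniformly in $n\ge1$. To see this uniformity, use $\tau(z)^m$, which is a convergent Laurent series on $0<|z|\le 2/3$. Subtracting its principal part and its $z^1$ term leaves an analytic function $O(z^3)$ on the compact set $[0,2/3]$.

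Then factor
\[
e^{-\tau_n^mt}=e^{-b_nt}\,e^{-d_m a_n^{-1}t}\,e^{-\varepsilon_n t},
\]
and expand the last two factors to first order using Lemma~\ref{lem:exp-taylor}. For $t\in(0,1]$ this gives
\[
e^{-d_m a_n^{-1}t-\varepsilon_nt}=1-d_m a_n^{-1}t+O(t a_n^{-3})+O(t^2a_n^{-2}),
\]
uniformly in $n$.

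Multiplying by $e^{-b_nt}$ and summing yields
\[
\theta_m(t)=\vartheta_m(t)-d_m t\sum_{n\ge1}a_n^{-1}e^{-b_nt}+O(t),
\]
because $\sum_n a_n^{-2}$ and $\sum_n a_n^{-3}$ converge. By Lemma~\ref{lem:bn-no-log}, $\vartheta_m$ has a pure-power expansion. Its terms below order $t$, namely $t^{-1/m},\dots$ and the constant, make up the ``$\cdots$''. Its $t^1$ coefficient is absorbed into $O(t)$, so $\vartheta_m$ contributes no $t\log t$.

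The remaining step is to show that $\sum_n a_n^{-1}e^{-b_nt}=H_m(t)+O(1)$, so that Lemma~\ref{lem:Hm-log} gives $-d_m t\bigl(\tfrac1m\log\tfrac1t+O(1)\bigr)=\tfrac{d_m}{m}t\log t+O(t)$. I expect this comparison of $e^{-b_nt}$ with $e^{-a_n^mt}$ to be the main obstacle, because $b_n-a_n^m\asymp a_n^{m-2}$ is unbounded when $m\ge3$. The naive bound $|e^{-b_nt}-e^{-a_n^mt}|\le t|b_n-a_n^m|e^{-\min(b_n,a_n^m)t}$ handles it, though. Since $b_n\ge c\,a_n^m$ for $n$ large (finitely many $n$ contribute $O(1)$), the difference is bounded by
\[
\sum_n a_n^{-1}\,t\,a_n^{m-2}\,e^{-ca_n^mt}\ll t\sum_n a_n^{m-3}e^{-ca_n^mt}.
\]
Comparing the last sum with an integral gives $\ll t\cdot t^{-(m-2)/m}=t^{2/m}$ for $m\ge3$, and $O(t)$ for $m=1$. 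Either way this is $O(1)$, as required.

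Finally, suppose $\theta_m$ had a pure-power expansion. Subtracting the pure-power expansion of $\vartheta_m$ would leave $\tfrac{d_m}{m}t\log t+O(t)$ equal to a pure-power expansion. That is impossible, since $t\log t/t\to-\infty$ while the ratio of any pure-power expansion (truncated past $t^1$) to $t$ cannot behave like $\log t$ unless its sub-$t$ terms vanish. Since $d_m\neq0$ by Lemma~\ref{lem:a-1-coeff}, $(\tau_n^m)$ fails Definition~\ref{def:admissible}(2).
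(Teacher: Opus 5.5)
Your proposal is correct and follows essentially the same route as the paper: the factorization $e^{-\tau_n^m t}=e^{-b_nt}e^{-d_m a_n^{-1}t}e^{-\varepsilon_n t}$, a first-order expansion giving $\theta_m(t)=\vartheta_m(t)-d_m t\sum_n a_n^{-1}e^{-b_nt}+O(t)$, and a mean-value comparison with $H_m(t)$ that costs only $O(t^{2/m})$ before invoking Lemma~\ref{lem:Hm-log} and Lemma~\ref{lem:bn-no-log}. The paper instead restricts to $n\ge N$ and handles finitely many $n$ separately, whereas you get $\varepsilon_n=O(a_n^{-3})$ uniformly via the convergent Laurent series of $\tau(z)^m$ (oddness of $\tau$ removes the $z^0$ and $z^2$ terms); together with your explicit argument that a nonzero $t\log t$ term cannot coexist with a pure-power expansion, these are small tightenings of the paper's argument rather than a different method.
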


\begin{proof}
Let $m$ be odd. From Lemma~\ref{lem:a-1-coeff}, there exists $N\ge1$ and a constant $C>0$ such that,
for all $n\ge N$,
\[
\tau_n^m = b_n + d_m a_n^{-1} + \varepsilon_n,
\qquad
b_n:=a_n^m+\sum_{j=1}^{(m-1)/2} c_{m,j}\,a_n^{m-2j},
\qquad
|\varepsilon_n|\le C a_n^{-3}.
\]
The finitely many terms $n<N$ contribute a function analytic at $t=0$ (hence a pure power series),
so they do not affect the presence or the coefficient of a $t\log t$ term. We therefore focus on $n\ge N$.

Fix $t\in(0,1]$. Write
\[
e^{-\tau_n^m t}=e^{-b_n t}\,e^{-d_m t a_n^{-1}}\,e^{-\varepsilon_n t}.
\]
Since $a_n^{-1}\le 2/3$ and $t\le1$, the quantity $|d_m|t a_n^{-1}$ is uniformly bounded in $n$.
Using the standard estimate $|e^{-x}-1+x|\ll x^2 e^{|x|}$, we obtain
\[
e^{-d_m t a_n^{-1}} = 1 - d_m t a_n^{-1} + O(t^2 a_n^{-2}),
\]
and similarly (since $|\varepsilon_n|t\ll a_n^{-3}$)
\[
e^{-\varepsilon_n t}=1+O(t a_n^{-3}).
\]
Multiplying these and absorbing cross-terms gives, for $n\ge N$,
\[
e^{-\tau_n^m t}
= e^{-b_n t}\Bigl(1-d_m t a_n^{-1}\Bigr)+O\!\left(e^{-b_n t}\,(t^2 a_n^{-2}+t a_n^{-3})\right).
\]
Summing over $n\ge N$ and using $\sum_{n\ge1}a_n^{-2}<\infty$ and $\sum_{n\ge1}a_n^{-3}<\infty$, we get
\begin{equation}\label{eq:oddlog-mainreduction}
\theta_m(t)
=\sum_{n\ge1}e^{-b_n t} - d_m t\sum_{n\ge1}a_n^{-1}e^{-b_n t} + O(t).
\end{equation}

\medskip
\noindent\emph{Comparison of $\sum a_n^{-1}e^{-b_n t}$ with $H_m(t)$.}
Since $b_n=a_n^m+O(a_n^{m-2})$ as $n\to\infty$, there exist $K>0$ and $N'\ge N$ such that
$|b_n-a_n^m|\le K a_n^{m-2}$ for all $n\ge N'$. For $t\in(0,1]$ and $n\ge N'$,
\[
|e^{-b_n t}-e^{-a_n^m t}|
=e^{-a_n^m t}\bigl|e^{-(b_n-a_n^m)t}-1\bigr|
\le e^{-a_n^m t}\,|b_n-a_n^m|\,t\,e^{|b_n-a_n^m|t}.
\]
Choose $N''\ge N'$ so that $a_n^2\ge 2K$ for all $n\ge N''$. Then for such $n$ we have
$|b_n-a_n^m|t\le \frac12 a_n^m t$, hence
\[
|e^{-b_n t}-e^{-a_n^m t}|\ll t\,a_n^{m-2}\,e^{-\frac12 a_n^m t}.
\]
Therefore
\[
\sum_{n\ge1}a_n^{-1}\bigl(e^{-b_n t}-e^{-a_n^m t}\bigr)
=O(1)+O\!\left(t\sum_{n\ge1}a_n^{m-3}e^{-\frac12 a_n^m t}\right)=O(1),
\]
where the last bound follows by an integral comparison. If $m=1$ the sum is $\ll t\sum a_n^{-2}=O(t)$,
and if $m\ge3$ (still odd) then
$t\sum a_n^{m-3}e^{-c a_n^m t}\ll t\cdot t^{-(m-2)/m}=t^{2/m}=O(1)$.
Consequently,
\[
\sum_{n\ge1}a_n^{-1}e^{-b_n t}=H_m(t)+O(1)\qquad(t\to0^+).
\]
By Lemma~\ref{lem:Hm-log}, $H_m(t)=\frac{1}{m}\log\frac{1}{t}+O(1)$, hence
\[
\sum_{n\ge1}a_n^{-1}e^{-b_n t}=\frac{1}{m}\log\frac{1}{t}+O(1).
\]

Since $\sum_{n\ge1}e^{-b_n t}=\vartheta_m(t)$ has no $t\log t$ term by Lemma~\ref{lem:bn-no-log},
the coefficient of $t\log t$ in $\theta_m(t)$ is exactly $d_m/m$.
Insert this into \eqref{eq:oddlog-mainreduction} to obtain
\[
\theta_m(t)
=\sum_{n\ge1}e^{-b_n t} - d_m t\Bigl(\frac{1}{m}\log\frac{1}{t}+O(1)\Bigr)+O(t)
=\sum_{n\ge1}e^{-b_n t}+\frac{d_m}{m}\,t\log t+O(t).
\]
Since $d_m\neq0$, the $t\log t$ term is nonzero, so QHS admissibility condition (2) fails.
\end{proof}

\subsection{Even $m$: admissibility}

Let $m=2p$ with $p\ge1$.
Write $\rho(z)=z\sigma(z^2)$ as before, so $\sigma$ is analytic in $|u|<4$ and
\[
\tau_n=a_n-a_n^{-1}\sigma(a_n^{-2})=a_n\bigl(1-u_n\sigma(u_n)\bigr),\qquad u_n:=a_n^{-2}.
\]
Set
\[
F_p(u):=\bigl(1-u\sigma(u)\bigr)^{2p},\qquad u\in[0,r_0],\quad r_0:=4/9.
\]
Then $F_p$ is analytic on $|u|<4$ (hence analytic in a neighborhood of $[0,r_0]$) and satisfies
$F_p(0)=1$.  Thus $F_p(u)-1$ vanishes at $u=0$ and we may write
\[
F_p(u)=1+u\,q_p(u),
\qquad
q_p(u):=\frac{F_p(u)-1}{u},
\]
where $q_p$ is analytic on $|u|<4$. In particular,
\begin{equation}\label{eq:even-normal-form-correct}
\tau_n^{2p}=a_n^{2p}F_p(u_n)=a_n^{2p}+a_n^{2p-2}q_p(u_n).
\end{equation}
Define the heat trace
\[
\theta_{2p}(t):=\sum_{n\ge1}e^{-\tau_n^{2p}t}
=\sum_{n\ge1}e^{-a_n^{2p}t}\,e^{-t a_n^{2p-2}q_p(u_n)}.
\]

\begin{lemma}[Uniform exponential Taylor bound]\label{lem:exp-taylor-bound}
Let $R\ge0$.  For all $x\in\C$ and all $t\in[0,1]$,
\[
e^{-tx}=\sum_{r=0}^{R}\frac{(-t)^r}{r!}\,x^r
+O_R\!\left(t^{R+1}|x|^{R+1}e^{t|x|}\right).
\]
\end{lemma}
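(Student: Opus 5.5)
This is the same statement as Lemma~\ref{lem:exp-taylor}, restated for use in the even-power argument, so I will reuse that argument verbatim. Nothing about $x$ beyond $x\in\C$ is needed. In the application $x=a_n^{2p-2}q_p(u_n)$ is unbounded in $n$, which is exactly why the factor $e^{t|x|}$ is kept in the remainder rather than absorbed into the implied constant.

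First I write the tail of the exponential series,
\[
e^{-tx}-\sum_{r=0}^{R}\frac{(-tx)^r}{r!}=\sum_{k=R+1}^{\infty}\frac{(-tx)^k}{k!}.
\]
This converges absolutely for every $x\in\C$ and $t\ge0$. Taking absolute values bounds it by $\sum_{k\ge R+1}(t|x|)^k/k!$.

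Next I factor out $(t|x|)^{R+1}/(R+1)!$ and reindex with $k=R+1+j$. The quotient $(R+1)!/(R+1+j)!$ equals $1/\bigl((R+2)\cdots(R+1+j)\bigr)$. Each factor in the denominator is at least the corresponding factor of $j!$, so the quotient is at most $1/j!$. Summing over $j\ge0$ gives the bound
\[
\frac{(t|x|)^{R+1}}{(R+1)!}\,e^{t|x|}.
\]
This is the claim with implied constant $1/(R+1)!$.

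There is no real obstacle. The only point to note is that the bound holds for all $t\ge0$. The restriction $t\in[0,1]$ is harmless and just matches the later usage.
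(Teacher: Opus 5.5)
Your proposal is correct and follows the paper's proof exactly: bound the tail $\sum_{k\ge R+1}(t|x|)^k/k!$ termwise, factor out $(t|x|)^{R+1}/(R+1)!$, and use $(R+2)\cdots(R+1+j)\ge j!$ to obtain $e^{t|x|}$ with implied constant $1/(R+1)!$. This is the same argument the paper gives for Lemma~\ref{lem:exp-taylor}, which it restates in condensed form here.
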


\begin{proof}
Use the exponential series tail:
\[
e^{-tx}-\sum_{r=0}^{R}\frac{(-t)^r}{r!}x^r
=\sum_{n\ge R+1}\frac{(-t x)^n}{n!}.
\]
Taking absolute values and bounding the tail by the full exponential series gives
\[
\left|e^{-tx}-\sum_{r=0}^{R}\frac{(-t)^r}{r!}x^r\right|
\le \sum_{n\ge R+1}\frac{(t|x|)^n}{n!}
\le \frac{(t|x|)^{R+1}}{(R+1)!}\,e^{t|x|}.
\]
\end{proof}

\medskip
\noindent\emph{Step 1: Taylor expansion in $t$ with a summable remainder.}
Since $q_p$ is continuous on $[0,r_0]$, set $Q_*:=\sup_{u\in[0,r_0]}|q_p(u)|<\infty$ and define
\[
A_n:=a_n^{2p-2}q_p(u_n),\qquad u_n=a_n^{-2}.
\]
Fix an integer $R\ge0$. For $t\in(0,1]$ and every $n$, Taylor's theorem with remainder gives
\[
e^{-tA_n}
=\sum_{r=0}^{R}\frac{(-t)^r}{r!}\,A_n^{r}
+O\!\left(t^{R+1}|A_n|^{R+1}e^{t|A_n|}\right),
\]
where the implied constant depends only on $R$.
Multiplying by $e^{-a_n^{2p}t}$ and summing over $n$ yields
\begin{equation}\label{eq:theta2p-taylor}
\theta_{2p}(t)
=\sum_{r=0}^{R}\frac{(-t)^r}{r!}\,F_r(t) \;+\; \Err_R(t),
\end{equation}
where
\[
F_r(t):=\sum_{n\ge1}a_n^{(2p-2)r}q_p(u_n)^r\,e^{-a_n^{2p}t},
\]
and
\[
\Err_R(t)\ll_{p,R} t^{R+1}\sum_{n\ge1}a_n^{(2p-2)(R+1)}\,e^{-a_n^{2p}t}\,e^{t|A_n|}.
\]
Since $|A_n|\le Q_* a_n^{2p-2}$, we have
\[
e^{-a_n^{2p}t}e^{t|A_n|}
\le \exp\!\left(-t\bigl(a_n^{2p}-Q_*a_n^{2p-2}\bigr)\right)
=\exp\!\left(-t\,a_n^{2p-2}(a_n^2-Q_*)\right).
\]
Choose $n_0$ so that $a_n^2\ge 2Q_*$ for all $n\ge n_0$. Then for $n\ge n_0$,
$a_n^{2p}-Q_*a_n^{2p-2}\ge \tfrac12 a_n^{2p}$, hence
\[
e^{-a_n^{2p}t}e^{t|A_n|}\le e^{-\frac12 a_n^{2p}t}.
\]
Splitting the sum at $n_0$ (finitely many terms for $n<n_0$) gives
\[
\Err_R(t)
=O_{p,R}(t^{R+1})
+O_{p,R}\!\left(t^{R+1}\sum_{n\ge1}a_n^{(2p-2)(R+1)}e^{-\frac12 a_n^{2p}t}\right).
\]
By the integral comparison
\[
\sum_{n\ge1}a_n^{\alpha}e^{-c a_n^{2p}t}
\ll_{\alpha,p,c} t^{-(\alpha+1)/(2p)}\qquad(t\to0^+,\ \alpha\ge0),
\]
we obtain
\[
\Err_R(t)=O\!\left(t^{R+1-( (2p-2)(R+1)+1)/(2p)}\right)
=O\!\left(t^{(R+1)/p-1/(2p)}\right).
\]
In particular, by choosing $R$ large we can make $\Err_R(t)=O(t^{N})$ for any prescribed $N$.

\medskip
\noindent\emph{Step 2: Expand $q_p(u)^r$ in powers of $u$ and reduce to weighted theta sums.}
Fix $r\ge0$. Since $q_p$ is analytic on a disk $|u|<4$, so is $q_p(u)^r$, hence
\[
q_p(u)^r=\sum_{k\ge0} c_{r,k}u^k
\]
with radius of convergence $4$. Choose $\varrho$ with $r_0<\varrho<4$, by Cauchy estimates,
$|c_{r,k}|\ll_{r} \varrho^{-k}$. Since $u_n\le r_0<\varrho$ and $|c_{r,k}|\ll_r \varrho^{-k}$, we have
\[
\sum_{k\ge0}|c_{r,k}|\,u_n^k \ \ll_r\ \sum_{k\ge0}\left(\frac{r_0}{\varrho}\right)^k<\infty
\]
uniformly in $n$, so Tonelli's theorem justifies interchanging the $k$-- and $n$--sums.
Using $u_n=a_n^{-2}\le r_0$, we may interchange sums (absolute convergence) and obtain
\[
F_r(t)=\sum_{k\ge0}c_{r,k}\sum_{n\ge1}a_n^{(2p-2)r-2k}\,e^{-a_n^{2p}t}
=\sum_{k\ge0}c_{r,k}\,\Theta_{p,\, (2p-2)r-2k}(t),
\]
where for $q\in\mathbb{R}$ we set
\[
\Theta_{p,q}(t):=\sum_{n\ge1}a_n^{q}e^{-a_n^{2p}t}.
\]

\medskip
\noindent\emph{Step 3: Mellin analysis of $\Theta_{p,q}(t)$ and absence of logarithms.}
For each fixed $q\in\mathbb{R}$, Mellin inversion gives
\[
\Theta_{p,q}(t)=\frac{1}{2\pi i}\int_{\Re s=c}\Gamma(s)\,t^{-s}
\Bigl(\zeta(2ps-q,\tfrac12)-2^{2ps-q}\Bigr)\,ds,
\]
valid for any $c>(q+1)/(2p)$.
The only pole of $\zeta(\cdot,\tfrac12)$ is at $1$, so the integrand has at most a simple pole at
\[
2ps-q=1 \quad\Longleftrightarrow\quad s=\frac{q+1}{2p},
\]
and the usual simple poles of $\Gamma$ at $s=0,-1,-2,\dots$.
If $q\in2\mathbb{Z}$ then $(q+1)/(2p)$ can never be a nonpositive integer, hence this pole
never collides with a pole of $\Gamma$. Therefore contour shifting yields a full asymptotic
expansion of $\Theta_{p,q}(t)$ as $t\to0^+$ \emph{in pure powers} (no logarithmic terms).

\medskip
\noindent\emph{Step 4: Assemble the expansion for $\theta_{2p}(t)$.}
For each fixed $R$, the finite sum in \eqref{eq:theta2p-taylor} is a finite linear combination of the
functions $F_r(t)$, each of which is an absolutely convergent series of $\Theta_{p,q}(t)$'s with
$q\in2\mathbb{Z}$. Hence it admits a full Poincar\'e expansion in pure powers as $t\to0^+$,
and the remainder $\Err_R(t)$ can be made arbitrarily small (in the sense of powers of $t$)
by taking $R$ large. This proves that $\theta_{2p}(t)$ has a full small-$t$ expansion in pure powers.
Together with the bounds from the beginning of this section, it follows that $(\tau_n^{2p})_{n\ge1}$
is admissible in the sense of Quine--Heydari--Song.
\qed

\section{Consequences of the parity dichotomy}\label{sec:parity-consequences}

Throughout this section we assume the parity dichotomy from Theorem~\ref{thm:parity-dichotomy}:
for each integer $m\ge1$ the heat trace
\[
\theta_m(t):=\sum_{n\ge1}\e^{-\tau_n^m t}\qquad(t>0)
\]
admits a full small-time expansion in \emph{pure powers} when $m$ is even, while for $m$ odd it contains
a \emph{nonzero} logarithmic term $\kappa_m\,t\log t$.

\subsection{Spectral zeta functions attached to $\tau_n^m$}

For every $m\ge1$ define the (initially convergent) spectral zeta function
\begin{equation}\label{eq:def-Zm}
Z_m(s):=\sum_{n\ge1}(\tau_n^m)^{-s}=\sum_{n\ge1}\tau_n^{-ms}=L_+(ms),
\qquad \Ree s>\frac1m.
\end{equation}
(The equality $Z_m(s)=L_+(ms)$ holds initially by definition and then as meromorphic functions once
continuation is established.)

\begin{lemma}[Exponential decay for large $t$]\label{lem:theta-m-exp-decay}
For every $m\ge1$ there exists $C_m>0$ such that for all $t\ge1$,
\[
\theta_m(t)\le C_m\,\e^{-\tau_1^m t}.
\]
\end{lemma}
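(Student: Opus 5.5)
The plan is to copy the proof of Lemma~\ref{lem:theta-exp-decay}, replacing $\lambda_n$ by $\tau_n^m$. Factor out the smallest exponent:
\[
\theta_m(t)=\e^{-\tau_1^m t}\sum_{n\ge1}\e^{-(\tau_n^m-\tau_1^m)t}.
\]
The only structural input is that $\tau_n^m-\tau_1^m\ge0$ for every $n$. This holds because $(\tau_n)$ is increasing with $\tau_1>0$. The paper already records that $\tau_n$ is strictly increasing: $a_n$ increases, while $\rho(a_n^{-1})$ decreases because $\rho$ is increasing on $[0,2]$. The ordering $0<\tau_1<\tau_2<\cdots$ from the introduction also gives this directly.

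With nonnegative exponents, each term is decreasing in $t$. Hence for $t\ge1$,
\[
\e^{-(\tau_n^m-\tau_1^m)t}\le \e^{-(\tau_n^m-\tau_1^m)},
\]
and therefore
\[
\theta_m(t)\le \e^{-\tau_1^m t}\sum_{n\ge1}\e^{-(\tau_n^m-\tau_1^m)}.
\]

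It remains to check that the constant
\[
C_m:=\e^{\tau_1^m}\sum_{n\ge1}\e^{-\tau_n^m}
\]
is finite. By Lemma~\ref{lem:tau-lower}, $\tau_n\ge n$, so $\tau_n^m\ge n^m\ge n$ for $m\ge1$. The series is therefore dominated by the geometric series $\sum_n \e^{-n}<\infty$, and this choice of $C_m$ works.

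There is no real obstacle here. The only points needing care are the monotonicity of $(\tau_n)$, which ensures the factored exponents are nonnegative, and the lower bound $\tau_n\ge n$, which ensures summability.
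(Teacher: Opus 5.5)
Your proof is correct and follows the paper's argument essentially verbatim: factor out $\e^{-\tau_1^m t}$, bound each remaining term by its value at $t=1$, and control the resulting sum via $\tau_n\ge n$. Your explicit observation that $\tau_n^m-\tau_1^m\ge 0$ (from the monotonicity of $(\tau_n)$) justifies the $t\ge1$ comparison step, which the paper uses without comment.
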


\begin{proof}
Write $\theta_m(t)=\e^{-\tau_1^m t}\sum_{n\ge1}\e^{-(\tau_n^m-\tau_1^m)t}$.
For $t\ge1$, $\e^{-(\tau_n^m-\tau_1^m)t}\le \e^{-(\tau_n^m-\tau_1^m)}$.
Since $\tau_n\ge n$ (Lemma~\ref{lem:tau-lower}), we have
$\e^{-(\tau_n^m-\tau_1^m)}\le \e^{\tau_1^m}\e^{-n^m}$, and $\sum_{n\ge1}\e^{-n^m}<\infty$.
Absorb the convergent sum into $C_m$.
\end{proof}

\begin{proposition}[Mellin representation]\label{prop:mellin-Zm}
For $\Ree s>\tfrac1m$,
\begin{equation}\label{eq:mellin-Zm}
Z_m(s)=\frac{1}{\Gamma(s)}\int_0^\infty t^{s-1}\theta_m(t)\,dt.
\end{equation}
\end{proposition}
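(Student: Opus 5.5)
The plan is to repeat the argument of Proposition~\ref{prop:mellin-Z} essentially verbatim, with $\lambda_n$ replaced by $\tau_n^m$. The first ingredient is the elementary Gamma integral
\[
\int_0^\infty t^{s-1}\e^{-\mu t}\,dt=\mu^{-s}\Gamma(s),
\]
valid for $\mu>0$ and $\Ree s>0$. We apply it with $\mu=\tau_n^m$, which is positive by the lower bound $\tau_n\ge n$ of Lemma~\ref{lem:tau-lower}.

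The only real point is to justify interchanging the sum over $n$ with the integral over $t$. Write $\sigma=\Ree s>1/m>0$. Since the integrand $t^{s-1}\e^{-\tau_n^m t}$ has modulus $t^{\sigma-1}\e^{-\tau_n^m t}$, Tonelli's theorem gives
\[
\sum_{n\ge1}\int_0^\infty t^{\sigma-1}\e^{-\tau_n^m t}\,dt=\Gamma(\sigma)\sum_{n\ge1}\tau_n^{-m\sigma}\le \Gamma(\sigma)\sum_{n\ge1}n^{-m\sigma}.
\]
The last series is finite exactly because $m\sigma>1$, which is where the hypothesis $\Ree s>1/m$ enters. Fubini's theorem then allows termwise integration. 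This gives
\[
\int_0^\infty t^{s-1}\theta_m(t)\,dt=\Gamma(s)\sum_{n\ge1}\tau_n^{-ms}=\Gamma(s)Z_m(s).
\]

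Dividing by $\Gamma(s)$, which is nonzero everywhere, yields \eqref{eq:mellin-Zm}. As a consistency check, the integral converges at $t\to\infty$ by Lemma~\ref{lem:theta-m-exp-decay}, and near $t=0$ by the bound $\theta_m(t)\ll t^{-1/m}$ established at the start of Section~\ref{sec:parity-dichotomy}. There is no genuine obstacle here; the only step requiring care is the absolute-convergence bound above, and it follows directly from $\tau_n\ge n$.
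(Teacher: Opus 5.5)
Your proposal is correct and follows the paper's proof essentially verbatim: you apply the Gamma integral termwise with $\mu=\tau_n^m$, and justify Fubini by the absolute convergence of $\sum_{n\ge1}\tau_n^{-m\sigma}$, which follows from $\tau_n\ge n$. Your explicit Tonelli bound is just a slightly more careful version of the paper's one-line justification of the interchange.
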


\begin{proof}
For $\Ree s>0$ and $\Lambda>0$,
$\int_0^\infty t^{s-1}\e^{-\Lambda t}\,dt=\Gamma(s)\Lambda^{-s}$.
For $\Ree s>\tfrac1m$, the series $Z_m(s)=\sum\tau_n^{-ms}$ converges absolutely, hence by Fubini,
\[
\int_0^\infty t^{s-1}\theta_m(t)\,dt
=\sum_{n\ge1}\int_0^\infty t^{s-1}\e^{-\tau_n^m t}\,dt
=\Gamma(s)\sum_{n\ge1}\tau_n^{-ms}=\Gamma(s)Z_m(s).
\]
\end{proof}

\subsection{Even powers: clean QHS consequences}

Fix $p\ge1$ and set $m=2p$. By Theorem~\ref{thm:parity-dichotomy}\textup{(b)},
$(\tau_n^{2p})_{n\ge1}$ is admissible in the sense of \cite{QHS}, hence $\theta_{2p}(t)$ admits a full
small-time expansion in pure powers:
\begin{equation}\label{eq:theta2p-exp}
\theta_{2p}(t)\sim\sum_{\nu=0}^\infty c^{(2p)}_\nu\,t^{j^{(2p)}_\nu},
\qquad j^{(2p)}_0<j^{(2p)}_1<\cdots,\quad j^{(2p)}_\nu\to+\infty,
\end{equation}
with no logarithmic terms.

\begin{theorem}[Meromorphic continuation and residues for $Z_{2p}$]\label{thm:Z2p-meromorphic}
The function $Z_{2p}(s)=L_+(2ps)$ admits a meromorphic continuation to $\C$ with at most simple poles.
More precisely, if $c^{(2p)}_\alpha$ denotes the coefficient of $t^\alpha$ in the expansion
\eqref{eq:theta2p-exp}, then $Z_{2p}$ may have a simple pole at $s=-\alpha$ with residue
\begin{equation}\label{eq:Z2p-res}
\Res_{s=-\alpha} Z_{2p}(s)=\frac{c^{(2p)}_\alpha}{\Gamma(-\alpha)}.
\end{equation}
In particular, although the Mellin analysis produces possible terms $(s+k)^{-1}$ from the coefficient
of $t^k$, the factor $1/\Gamma(s)$ has a simple zero at $s=-k$ and cancels this singularity. Hence, $Z_{2p}$ is holomorphic at every nonpositive integer $s=-k$.

\end{theorem}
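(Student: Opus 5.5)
The plan is to copy the proof of Theorem~\ref{thm:Z-meromorphic}, using the Mellin representation of Proposition~\ref{prop:mellin-Zm} with $m=2p$ and the pure-power expansion \eqref{eq:theta2p-exp}. From Step~3 of the even-power argument in Section~\ref{sec:parity-dichotomy}, the exponents that occur are of the form $j=-(q+1)/(2p)$ with $q\in2\Z$, together with nonnegative integers. So they lie in a discrete set that is bounded below, and for each $N$ we have a truncation
\[
\theta_{2p}(t)=\sum_{j^{(2p)}_\nu\le N}c^{(2p)}_{j_\nu}t^{j_\nu}+R_N(t),\qquad R_N(t)=O(t^{N+\eta})
\]
for some $\eta>0$. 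This follows directly from the definition of a full asymptotic expansion once $N$ is taken between consecutive exponents.

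\textbf{Step 1.} Split the Mellin integral at $t=1$. By Lemma~\ref{lem:theta-m-exp-decay}, $I_\infty(s)=\int_1^\infty t^{s-1}\theta_{2p}(t)\,dt$ is entire.

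\textbf{Step 2.} In $I_0(s)=\int_0^1 t^{s-1}\theta_{2p}(t)\,dt$, substitute the truncation. Each monomial contributes $c_j/(s+j)$ for $\Ree s>-j$, and the remainder contributes a function holomorphic on $\Ree s>-N-\eta$. This gives
\[
\Gamma(s)Z_{2p}(s)=\sum_{j\le N}\frac{c^{(2p)}_j}{s+j}+H_N(s),
\]
with $H_N$ holomorphic there. Since $1/\Gamma$ is entire, $Z_{2p}$ is meromorphic on $\Ree s>-N-\eta$. Letting $N\to\infty$ gives meromorphy on all of $\C$. Because the expansion contains no logarithmic terms, only first-order factors $(s+j)^{-1}$ appear, so all poles are at most simple.

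\textbf{Step 3.} Compute the residues. At $s=-\alpha$ with $\alpha\notin\Z_{\ge0}$, $\Gamma$ is holomorphic and nonzero, so the residue is $c^{(2p)}_\alpha/\Gamma(-\alpha)$, which is \eqref{eq:Z2p-res}. (If $-\alpha$ happens to be an exponent with vanishing coefficient, the formula gives zero, which is consistent.) At $s=-k$ with $k\in\Z_{\ge0}$, $1/\Gamma(s)$ has a simple zero. It cancels the simple pole $c_k/(s+k)$, and the remaining holomorphic part is multiplied by a vanishing factor. Exactly as in Theorem~\ref{thm:Z-negative}, $Z_{2p}$ is holomorphic at $-k$ with $Z_{2p}(-k)=(-1)^k k!\,c^{(2p)}_k$, and we read \eqref{eq:Z2p-res} as giving residue $0$ there, since $1/\Gamma(-k)=0$. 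The identification $Z_{2p}(s)=L_+(2ps)$ extends from $\Ree s>1/(2p)$ by uniqueness of meromorphic continuation.

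\textbf{Main obstacle.} The one point requiring care is the bookkeeping of the exponent set. The exponents $-(q+1)/(2p)$ are not all half-integers, so I will not reuse the set $\tfrac12\Z$. I will only need that the set is discrete and bounded below by $-1/(2p)$, with leading term coming from $q=0$, which is what makes the truncation step above valid. Everything else is the routine Mellin argument already carried out for $Z$.
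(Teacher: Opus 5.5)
Your proposal is correct and follows the paper's proof essentially step for step. You split the Mellin integral at $t=1$ and use Lemma~\ref{lem:theta-m-exp-decay} to make the tail entire, integrate the truncated pure-power expansion termwise on $(0,1]$ to get simple poles $c^{(2p)}_\alpha/(s+\alpha)$, and let the simple zeros of $1/\Gamma$ cancel these poles at nonpositive integers. Your extra bookkeeping of the exponent set (odd multiples of $1/(2p)$ together with nonnegative integers) is harmless but unnecessary: \eqref{eq:theta2p-exp} already provides a strictly increasing sequence $j^{(2p)}_\nu\to+\infty$, and the paper simply truncates at $j^{(2p)}_N$.
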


\begin{proof}
Fix $N\in\N$ and truncate \eqref{eq:theta2p-exp} at order $t^{j^{(2p)}_{N}}$:
\[
\theta_{2p}(t)=\sum_{\nu=0}^{N-1}c^{(2p)}_\nu\,t^{j^{(2p)}_\nu}+R_N(t),
\qquad R_N(t)=O\!\bigl(t^{j^{(2p)}_{N}}\bigr)\quad(t\to0^+).
\]
Split the Mellin integral \eqref{eq:mellin-Zm} at $1$:
\[
Z_{2p}(s)=\frac{1}{\Gamma(s)}
\Bigl(\int_0^1 t^{s-1}\theta_{2p}(t)\,dt+\int_1^\infty t^{s-1}\theta_{2p}(t)\,dt\Bigr).
\]
By Lemma~\ref{lem:theta-m-exp-decay}, the $[1,\infty)$--integral converges absolutely for every $s\in\C$
and defines an entire function.

On $(0,1]$, insert the truncated expansion and integrate termwise:
\[
\int_0^1 t^{s-1}\theta_{2p}(t)\,dt
=\sum_{\nu=0}^{N-1}\frac{c^{(2p)}_\nu}{s+j^{(2p)}_\nu}+\int_0^1 t^{s-1}R_N(t)\,dt.
\]
The remainder integral is holomorphic for $\Ree s>-j^{(2p)}_{N}$.
Thus $Z_{2p}(s)$ extends meromorphically with possible poles only at $s=-j^{(2p)}_\nu$, all simple,
and \eqref{eq:Z2p-res} follows by inspection since $\Gamma$ is holomorphic and nonzero at $s=-\alpha$
unless $\alpha\in\Z_{\ge0}$.
Finally, for $\alpha=k\in\Z_{\ge0}$ the factor $1/\Gamma(s)$ has a simple zero at $s=-k$,
which cancels the simple pole coming from $(s+k)^{-1}$. Hence $Z_{2p}$ is holomorphic at $s=-k$.
\end{proof}

\begin{theorem}[Special values at nonpositive integers]\label{thm:Z2p-negative}
For every integer $k\ge0$, the meromorphic continuation of $Z_{2p}(s)$ is holomorphic at $s=-k$ and
\begin{equation}\label{eq:Z2p-negative}
Z_{2p}(-k)=(-1)^k k!\,c^{(2p)}_k,
\end{equation}
where $c^{(2p)}_k$ denotes the coefficient of $t^k$ in the small-time expansion \eqref{eq:theta2p-exp}.
Equivalently,
\[
L_+(-2pk)=Z_{2p}(-k)=(-1)^k k!\,c^{(2p)}_k.
\]
\end{theorem}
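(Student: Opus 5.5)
The argument mirrors the proof of Theorem~\ref{thm:Z-negative} almost verbatim, now applied to the Mellin representation \eqref{eq:mellin-Zm} with $m=2p$. I would fix $k\ge0$ and truncate the pure-power expansion \eqref{eq:theta2p-exp} at an index $N$ large enough that $j^{(2p)}_N>k$. Splitting the Mellin integral at $t=1$ as in the proof of Theorem~\ref{thm:Z2p-meromorphic} then gives
\[
Z_{2p}(s)=\frac{1}{\Gamma(s)}\Bigl(\frac{c^{(2p)}_k}{s+k}+A(s)\Bigr),
\]
near $s=-k$. Here $c^{(2p)}_k$ is the coefficient of $t^k$; it is set to $0$ if $k$ is not among the exponents $j^{(2p)}_\nu$. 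The function $A(s)$ collects three pieces, each holomorphic at $s=-k$:
\begin{itemize}
\item the terms $c^{(2p)}_\nu/(s+j^{(2p)}_\nu)$ with $j^{(2p)}_\nu\ne k$;
\item the remainder integral $\int_0^1 t^{s-1}R_N(t)\,dt$, which is holomorphic for $\Ree s>-j^{(2p)}_N$;
\item the $[1,\infty)$ integral, which is entire by Lemma~\ref{lem:theta-m-exp-decay}.
\end{itemize}

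Next I would use that $1/\Gamma(s)$ has a simple zero at $s=-k$. From $\Res_{s=-k}\Gamma=(-1)^k/k!$ we get
\[
\frac{1}{\Gamma(s)}=(-1)^k k!\,(s+k)+O\bigl((s+k)^2\bigr).
\]
Multiplying through, $\frac{1}{\Gamma(s)}A(s)\to0$, while $\frac{1}{\Gamma(s)}\cdot\frac{c^{(2p)}_k}{s+k}\to(-1)^k k!\,c^{(2p)}_k$. This yields holomorphy at $s=-k$ and \eqref{eq:Z2p-negative}. The equivalent form follows from $Z_{2p}(s)=L_+(2ps)$: this holds for $\Ree s>1/(2p)$ by \eqref{eq:def-Zm}, and extends by uniqueness of meromorphic continuation once $L_+$ is known to be meromorphic. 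Evaluating at $s=-k$ gives $L_+(-2pk)$.

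The one point needing care is the implicit claim that the exponent set may fail to contain $k$, i.e.\ that the truncation is compatible with the bookkeeping. I would handle this by indexing the expansion over a fixed exponent lattice containing all nonnegative integers, with zero coefficients allowed. By Step~3 of the even-power argument, all exponents come from poles of $\Gamma(s)\zeta(2ps-q,\tfrac12)$; they therefore lie in $\{-(q+1)/(2p):q\in 2\Z\}\cup\Z_{\ge0}$, which is discrete and bounded below. Since there are no logarithmic terms, no extra double pole can appear at $s=-k$, and this is the only place admissibility is genuinely used.
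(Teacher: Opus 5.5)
Your proposal is correct and is essentially the paper's own argument: split \eqref{eq:mellin-Zm} at $t=1$, isolate $c^{(2p)}_k/(s+k)$ with everything else holomorphic at $s=-k$, and let $1/\Gamma(s)=(-1)^k k!\,(s+k)+O((s+k)^2)$ cancel the pole to give $(-1)^k k!\,c^{(2p)}_k$. Your extra bookkeeping (taking $c^{(2p)}_k=0$ when $k$ is not an exponent, and passing to $L_+$ by uniqueness of continuation) is a harmless refinement. One small slip: the set $\{-(q+1)/(2p):q\in2\Z\}$ is not bounded below, but you only need that the actual exponents $j^{(2p)}_\nu$ increase to $+\infty$, which is built into \eqref{eq:theta2p-exp}.
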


\begin{proof}
Proceed as in the proof of Theorem~\ref{thm:Z-meromorphic} and Theorem~\ref{thm:Z-negative}:
near $s=-k$ the only potentially singular contribution from the $(0,1]$ integral is $c^{(2p)}_k/(s+k)$.
Since $\Gamma(s)$ has a simple pole at $s=-k$ with residue $\Res_{s=-k}\Gamma(s)=(-1)^k/k!$,
we have
\[
\frac{1}{\Gamma(s)} = (-1)^k k!\,(s+k)+O\!\bigl((s+k)^2\bigr)\qquad(s\to-k),
\]
and therefore $Z_{2p}(s)$ is holomorphic at $s=-k$ and the limit equals \eqref{eq:Z2p-negative}.
\end{proof}

The identity $Z_{2p}(s)=L_+(2ps)$ shows that admissibility of $\tau_n^{2p}$ provides a systematic
\emph{heat-kernel mechanism} for producing the special values $L_+(-2pk)$ (and, via $Z_{2p}'(0)$,
determinant-type constants) from the small-$t$ expansion of $\theta_{2p}(t)$.
For $p=1$, combining this coefficient extraction at infinity with the BOCRS inversion identity
for $\varphi$ yields Conjecture~1 of \cite{BOCRS} (Section~\ref{sec:conj1}).
For $p>1$, admissibility gives the expansion side for $L_+(-2pk)$, but a further symmetry
relating $L_+(-2pk)$ to $L_+(2pk)$ would require additional functional equations for the corresponding
canonical products (currently only available for $p=1$ via $\Phi$ and $\varphi$).

\subsection{Even powers: canonical products and coefficient extraction at infinity}

For $p\ge1$ define the genus-$0$ canonical product
\begin{equation}\label{eq:def-W2p}
W_{2p}(\lambda):=\prod_{n\ge1}\Bigl(1+\frac{\lambda}{\tau_n^{2p}}\Bigr).
\end{equation}
Since $\sum_{n\ge1}\tau_n^{-2p}<\infty$, the product converges absolutely and locally uniformly.
Set also
\[
R_{2p}(\lambda):=\sum_{n\ge1}\frac{1}{\lambda+\tau_n^{2p}}=\frac{W_{2p}'(\lambda)}{W_{2p}(\lambda)}.
\]

\begin{lemma}[Laplace representation]\label{lem:laplace-resolvent-2p}
For $\Ree\lambda>0$,
\[
R_{2p}(\lambda)=\int_0^\infty \e^{-\lambda t}\,\theta_{2p}(t)\,dt.
\]
\end{lemma}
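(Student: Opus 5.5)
The plan is to repeat the argument of Lemma~\ref{lem:laplace-resolvent} almost verbatim, with $\lambda_n=\tau_n^2$ replaced by $\tau_n^{2p}$. First, for each $n$ and each $\lambda$ with $\Ree\lambda>0$, the elementary Laplace integral gives
\[
\int_0^\infty \e^{-(\lambda+\tau_n^{2p})t}\,dt=\frac{1}{\lambda+\tau_n^{2p}},
\]
since $\Ree(\lambda+\tau_n^{2p})\ge\tau_n^{2p}>0$.

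Second, we justify exchanging the sum over $n$ with the integral over $t$ by Tonelli--Fubini applied to the absolute values. Using $|\e^{-\lambda t}|=\e^{-(\Ree\lambda)t}$, we get
\[
\sum_{n\ge1}\int_0^\infty \bigl|\e^{-(\lambda+\tau_n^{2p})t}\bigr|\,dt=\sum_{n\ge1}\frac{1}{\Ree\lambda+\tau_n^{2p}}\le\sum_{n\ge1}\tau_n^{-2p}\le\sum_{n\ge1}n^{-2p}<\infty.
\]
Here the bound $\tau_n\ge n$ from Lemma~\ref{lem:tau-lower} is used, together with $p\ge1$. Hence the double integral converges absolutely.

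Third, we may therefore interchange sum and integral:
\[
\int_0^\infty \e^{-\lambda t}\theta_{2p}(t)\,dt=\sum_{n\ge1}\frac{1}{\lambda+\tau_n^{2p}}=R_{2p}(\lambda).
\]
The identification $R_{2p}=W_{2p}'/W_{2p}$ is not needed for this statement. It follows separately, as in Proposition~\ref{prop:log-derivative-W}, from locally uniform termwise differentiation of $\sum_n\log(1+\lambda/\tau_n^{2p})$.

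There is no real obstacle in this argument. The only point needing care is the domination in the second step, which must hold uniformly in $t\in(0,\infty)$ and not just pointwise. This is why we bound by $\Ree\lambda>0$ rather than by $|\lambda|$; the required positivity is exactly the hypothesis of the lemma.
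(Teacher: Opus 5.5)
Your proof is correct and follows the same route as the paper's: compute $\int_0^\infty \e^{-(\lambda+\tau_n^{2p})t}\,dt=(\lambda+\tau_n^{2p})^{-1}$ and interchange sum and integral by Fubini, using $\tau_n\ge n$ to get $\sum_n\tau_n^{-2p}<\infty$. Your domination bound $\sum_n(\Ree\lambda+\tau_n^{2p})^{-1}<\infty$ is exactly the Tonelli hypothesis (sum of integrals of absolute values), so it is slightly more precise than the paper's bound on $\sum_n|\lambda+\tau_n^{2p}|^{-1}$.
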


\begin{proof}
Identical to Lemma~\ref{lem:laplace-resolvent}:
$\int_0^\infty\e^{-(\lambda+\tau_n^{2p})t}\,dt=(\lambda+\tau_n^{2p})^{-1}$ for $\Ree\lambda>0$,
and $\sum_{n\ge1}|\lambda+\tau_n^{2p}|^{-1}\ll \sum\tau_n^{-2p}<\infty$ on compact subsets of $\{\Ree\lambda>0\}$,
so Fubini applies.
\end{proof}

\begin{theorem}[Large-$\lambda$ coefficient extraction]\label{thm:W2p-coeff-extraction}
Fix $\delta\in(0,\tfrac{\pi}{2})$ and write $\Sigma_\delta=\{\lambda:\ |\arg\lambda|\le \tfrac{\pi}{2}-\delta\}$.
Then $\log W_{2p}(\lambda)$ admits an asymptotic expansion as $|\lambda|\to\infty$ with $\lambda\in\Sigma_\delta$,
and for every integer $k\ge1$ one has the coefficient identity
\begin{equation}\label{eq:W2p-integer-coeff}
[\lambda^{-k}]\,\log W_{2p}(\lambda)=\frac{(-1)^{k+1}}{k}\,Z_{2p}(-k)
=\frac{(-1)^{k+1}}{k}\,L_+(-2pk).
\end{equation}
\end{theorem}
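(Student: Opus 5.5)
The plan is to repeat the argument of Theorem~\ref{thm:resolvent-asymp} and Theorem~\ref{thm:logW-asymp}, now for the sequence $\tau_n^{2p}$. Three inputs are needed:
\begin{itemize}
\item the pure-power expansion \eqref{eq:theta2p-exp} of $\theta_{2p}$, from Theorem~\ref{thm:parity-dichotomy}(b);
\item exponential decay for $t\ge1$, from Lemma~\ref{lem:theta-m-exp-decay};
\item the Laplace representation of Lemma~\ref{lem:laplace-resolvent-2p}.
\end{itemize}
First I will apply the Watson-type Lemma~\ref{lem:watson} to $f=\theta_{2p}$. This requires the leading exponent to exceed $-1$. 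From $\tau_n\ge n$ and the Mellin analysis of Step~3 in the even case, the leading exponent is $j^{(2p)}_0=-1/(2p)>-1$, coming from the pole at $s=1/(2p)$ of $\Theta_{p,0}$. The output is a full expansion, uniform in $\Sigma_\delta$:
\[
R_{2p}(\lambda)=\sum_{\nu<N}c^{(2p)}_\nu\,\Gamma\bigl(j^{(2p)}_\nu+1\bigr)\lambda^{-j^{(2p)}_\nu-1}+O\bigl(|\lambda|^{-j^{(2p)}_N-1}\bigr).
\]

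Next I will integrate $R_{2p}=(\log W_{2p})'$ along the straight segment from $1$ to $\lambda$ inside $\Sigma_\delta$. The branch conventions are those of Remark~\ref{rem:sector-branches}, with $\log W_{2p}$ continued from $\lambda=0$. Each exponent $j=j^{(2p)}_\nu$ then contributes as follows:
\begin{itemize}
\item for $j\neq0$, the term integrates to $-\frac{c_j\Gamma(j+1)}{j}\lambda^{-j}$ plus a constant;
\item for $j=0$, the term integrates to $c_0\log\lambda$;
\item the remainder integrates to $O(|\lambda|^{-j_N})$ once $j_N>0$, and its convergent tail from $\infty$ is absorbed into the constant.
\end{itemize}
This gives an asymptotic expansion of $\log W_{2p}(\lambda)$ with finitely many growing powers, a term $c_0\log\lambda$, a constant $C_{W_{2p}}$, and decaying powers $\lambda^{-j}$.

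For integer $j=k\ge1$, the coefficient of $\lambda^{-k}$ is therefore $-\frac{k!}{k}c^{(2p)}_k$. By Theorem~\ref{thm:Z2p-negative}, $c^{(2p)}_k=(-1)^kZ_{2p}(-k)/k!$, so the coefficient equals $\frac{(-1)^{k+1}}{k}Z_{2p}(-k)$. The identity $Z_{2p}(-k)=L_+(-2pk)$ follows from \eqref{eq:def-Zm} by analytic continuation.

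The main obstacle is showing that the coefficient $[\lambda^{-k}]$ is well defined and receives no other contributions. Three things must be checked:
\begin{itemize}
\item \emph{Integer exponents only from $t^k$.} The exponents $j^{(2p)}_\nu$ lie in $\frac1{2p}\Z$ and may include non-integers, but the map $j\mapsto\lambda^{-j}$ is injective. Hence only the $t^k$ term of $\theta_{2p}$ yields $\lambda^{-k}$.
\item \emph{No spurious logarithms.} Since the expansion is log-free (part (b)), only the single $j=0$ term produces a $\log\lambda$. This term is not of the form $\lambda^{-k}$, so it does not affect the coefficient.
\item \emph{Uniqueness and uniform remainders.} Uniqueness of Poincar\'e coefficients in the sector justifies reading off $[\lambda^{-k}]$. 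The uniformity of the remainder in Lemma~\ref{lem:watson} over $\Sigma_\delta$ is what allows term-by-term integration along rays.
\end{itemize}
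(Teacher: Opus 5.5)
Your proposal is correct and follows the paper's proof essentially verbatim. Both apply Watson's lemma to $\theta_{2p}$ through the Laplace representation of $R_{2p}$, integrate $R_{2p}=(\log W_{2p})'$ term by term along the segment from $1$ to $\lambda$, and convert the coefficient $-\frac{k!}{k}c^{(2p)}_k$ using Theorem~\ref{thm:Z2p-negative}. Your additional checks — that the leading exponent $-1/(2p)$ exceeds $-1$, and that the integrated remainder should be handled via its convergent tail from $\infty$ — are sound refinements of details the paper leaves implicit.
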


\begin{proof}
Combine Lemma~\ref{lem:laplace-resolvent-2p} with Watson’s lemma (Lemma~\ref{lem:watson})
applied to $f(t)=\theta_{2p}(t)$, using the small-$t$ expansion \eqref{eq:theta2p-exp} and the
large-$t$ exponential decay from Lemma~\ref{lem:theta-m-exp-decay}. This yields an asymptotic expansion
for $R_{2p}(\lambda)$ in $\Sigma_\delta$. Since $\frac{d}{d\lambda}\log W_{2p}(\lambda)=R_{2p}(\lambda)$,
term-by-term integration along the segment from $1$ to $\lambda$ gives the corresponding expansion for
$\log W_{2p}(\lambda)$, up to an additive constant depending on the branch.

For integer $k\ge1$, the $\lambda^{-k-1}$ coefficient in $R_{2p}(\lambda)$ is
$c^{(2p)}_k\Gamma(k+1)=k!\,c^{(2p)}_k$, hence the $\lambda^{-k}$ coefficient in $\log W_{2p}(\lambda)$ equals
$-\frac{k!\,c^{(2p)}_k}{k}$. Using Theorem~\ref{thm:Z2p-negative}, $Z_{2p}(-k)=(-1)^k k!\,c^{(2p)}_k$,
which yields \eqref{eq:W2p-integer-coeff}.
\end{proof}

\subsection{Odd powers: residues and logarithms}

Fix an odd integer $m=2\ell+1$.
By Theorem~\ref{thm:parity-dichotomy}\textup{(a)}, the heat trace $\theta_m(t)$ has a nonzero term
$\kappa_m\,t\log t$ in its small-$t$ expansion. In the notation of Proposition~\ref{prop:odd-log},
one may take $\kappa_m=\frac{d_m}{m}$, where $d_m\neq0$ is the coefficient of $a_n^{-1}$ in the
large-$n$ expansion of $\tau_n^m$.

\begin{theorem}[A pole at $s=-1$ for $Z_m$]\label{thm:Zm-odd-pole}
Let $m$ be odd and assume that, as $t\to0^+$,
\[
\theta_m(t)=\kappa_m\,t\log t + O(t),
\qquad \kappa_m\neq0.
\]
Then the meromorphic continuation of $Z_m(s)=L_+(ms)$ has a \emph{simple pole} at $s=-1$ with
\begin{equation}\label{eq:Zm-res-minus1}
\Res_{s=-1} Z_m(s)=\kappa_m.
\end{equation}
Equivalently, $L_+(s)$ has a simple pole at $s=-m$ with residue
\begin{equation}\label{eq:Lplus-res-minusm}
\Res_{s=-m} L_+(s)=m\,\kappa_m.
\end{equation}
\end{theorem}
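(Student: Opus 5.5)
We use the Mellin representation of Proposition~\ref{prop:mellin-Zm}, exactly as in Theorem~\ref{thm:Z-meromorphic}. Split
\[
\Gamma(s)Z_m(s)=\int_0^1 t^{s-1}\theta_m(t)\,dt+\int_1^\infty t^{s-1}\theta_m(t)\,dt .
\]
By Lemma~\ref{lem:theta-m-exp-decay}, the second integral is entire. The hypothesis is too coarse on its own: $\theta_m(t)=\kappa_m t\log t+O(t)$ is not even consistent with the $t^{-1/m}$ growth of $\theta_m$. So we first upgrade it to a genuine expansion up to order $t^{1+\eta}$. Concretely, we want
\[
\theta_m(t)=\sum_{j<1}c_j t^{j}+\kappa_m t\log t+c_1 t+O(t^{1+\eta})
\]
for some $\eta>0$, with exponents $j\in\frac1m\Z$ (together with $-1/m$ and the values $(q+1)/m$ coming from Lemma~\ref{lem:bn-no-log}).

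This refined expansion comes from the proof of Proposition~\ref{prop:odd-log}. Keep the reduction \eqref{eq:oddlog-mainreduction} but carry the error one order further. The pure-power part $\vartheta_m$ is handled by Lemma~\ref{lem:bn-no-log}. The correction $-d_m t\sum a_n^{-1}e^{-b_nt}$ is handled by running the contour shift of Lemma~\ref{lem:Hm-log} past $s=0$. At $s=0$ there is a double pole, giving $-\frac1m\log t+\mathrm{const}$, and the remainder is $O(t^{\eta})$. The discrepancy between $b_n$ and $a_n^m$ contributes only pure powers, by the same Taylor and Mellin reduction as in Lemma~\ref{lem:bn-no-log}. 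The remaining error terms $O(t^2a_n^{-2})$ and $O(ta_n^{-3})$ must be sharpened. We re-expand them using the full Laurent expansion of $\tau(z)^m$, which contains only odd powers of $z$ because $\rho$ is odd. This leaves terms $t\,a_n^{-3}e^{-b_nt}$, each of which is $ct+O(t^{1+\eta})$ with no logarithm. Terms with $t^2$ are $O(t^{2-\epsilon})$.

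Next, insert the refined expansion into $\int_0^1$. Each power $t^{j}$ contributes $c_j/(s+j)$, and the $c_1t$ term contributes $c_1/(s+1)$. The remainder gives a function holomorphic for $\Re s>-1-\eta$. The key computation is the logarithmic term:
\[
\int_0^1 t^{s}\log t\,dt=-\frac{1}{(s+1)^2}.
\]
Hence near $s=-1$ we get
\[
\Gamma(s)Z_m(s)=-\frac{\kappa_m}{(s+1)^2}+\frac{c_1}{s+1}+\text{holomorphic}.
\]
Now divide by $\Gamma(s)$, using the expansion $1/\Gamma(s)=-(s+1)+\gamma_1(s+1)^2+O((s+1)^3)$. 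The double pole becomes a simple pole with residue
\[
(-\kappa_m)\cdot(-1)=\kappa_m.
\]
The $c_1/(s+1)$ term becomes holomorphic, as do all the other terms. This proves \eqref{eq:Zm-res-minus1}. Finally, from $L_+(s)=Z_m(s/m)$ we get
\[
\operatorname{Res}_{s=-m}L_+=m\operatorname{Res}_{s=-1}Z_m=m\kappa_m,
\]
which is \eqref{eq:Lplus-res-minusm}.

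The main obstacle is the upgrade step. We must make sure that no other exponent $j$ equals $1$ and that no other logarithm appears at order $t$. In particular, pure-power terms $(q+1)/m=1$ with $q=m-1$ only give simple poles that $1/\Gamma$ cancels. We must also obtain the error $O(t^{1+\eta})$ rather than $O(t)$. If the sharpening of the error terms proves delicate, a fallback is to derive directly, as in Lemma~\ref{lem:Hm-log}, the Mellin transform of the $a_n^{-1}$ correction, namely $\Gamma(s)\bigl(\zeta(1+m(s+1),\tfrac12)-\cdots\bigr)$. This exhibits the pole of $Z_m$ at $s=-1$ with residue $d_m/m$ explicitly, while all other contributions are holomorphic near $s=-1$.
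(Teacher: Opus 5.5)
Your proposal is correct, and its core is the paper's proof: split the Mellin integral at $1$, use $\int_0^1 t^{s}\log t\,dt=-(s+1)^{-2}$, multiply by $1/\Gamma(s)=-(s+1)+O((s+1)^2)$, and pass to $L_+$ via $L_+(u)=Z_m(u/m)$.

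What you add is the upgrade step, which the paper omits. The paper substitutes the hypothesis directly. It is tacitly read as in Proposition~\ref{prop:odd-log}, with the pure-power terms of order below $t$ in front; as you correctly observe, the literal hypothesis contradicts $\theta_m\asymp t^{-1/m}$. The paper then asserts that the $O(t)$ remainder contributes $O((s+1)^{-1})$. That bound, $\bigl|\int_0^1 t^{s-1}E(t)\,dt\bigr|\le C/(\Re s+1)$, holds only for $\Re s>-1$. By itself it does not give a principal part at $s=-1$.

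Your refinement to $c_1t+O(t^{1+\eta})$ supplies that missing structure. It combines Lemma~\ref{lem:bn-no-log}, the contour shift of Lemma~\ref{lem:Hm-log} past $s=0$, and the sharpened $t a_n^{-3}$ and $t^2a_n^{-2}$ errors. In return you get a self-contained continuation of $Z_m$ to $\Re s>-1-\eta$ and a direct proof that the pole is simple. The price is redoing Proposition~\ref{prop:odd-log} one order further, using the structure of $\tau_n^m$ rather than the stated hypothesis.

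You could, however, sidestep your ``main obstacle'' entirely. By Theorem~\ref{thm:Z-meromorphic}, $L_+(u)=Z(u/2)$ is already meromorphic on $\C$ with at most simple poles at $u=1-2k$. So $Z_m(s)=L_+(ms)$ has at most a simple pole at $s=-1$, and its residue is $\lim_{s\downarrow-1}(s+1)Z_m(s)$. The crude one-sided bound above, applied along real $s\in(-1,-1+\delta)$, already shows this limit equals $\kappa_m$. This a priori meromorphy is what makes the paper's short argument legitimate.
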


\begin{proof}
Split the Mellin representation \eqref{eq:mellin-Zm} at $1$ as in the proof of
Theorem~\ref{thm:Z-meromorphic}. The $[1,\infty)$ integral is entire by Lemma~\ref{lem:theta-m-exp-decay}.
On $(0,1]$, write $\theta_m(t)=\kappa_m\,t\log t + O(t)$ and compute
\[
\int_0^1 t^{s-1}\,\kappa_m\,t\log t\,dt
=\kappa_m\int_0^1 t^{s}\log t\,dt
=-\frac{\kappa_m}{(s+1)^2}.
\]
Thus the Mellin integral has a double pole at $s=-1$ with principal part $-\kappa_m(s+1)^{-2}$.
Since $\Gamma(s)$ has a simple pole at $s=-1$ with residue $-1$, we have
$1/\Gamma(s)=-(s+1)+O((s+1)^2)$ near $s=-1$.
Multiplying gives
\[
Z_m(s)=\frac{1}{\Gamma(s)}\Bigl(-\frac{\kappa_m}{(s+1)^2}+O\!\Bigl(\frac{1}{s+1}\Bigr)\Bigr)
=\frac{\kappa_m}{s+1}+O(1),
\]
proving \eqref{eq:Zm-res-minus1}. The statement \eqref{eq:Lplus-res-minusm} follows from $Z_m(s)=L_+(ms)$
by the change of variables $u=ms$.
\end{proof}

\begin{corollary}[Recovery of the BOCRS residue formula]\label{cor:BOCRS-residue-from-heat}
For odd $m=2\ell+1$, using $\kappa_m=d_m/m$ from Proposition~\ref{prop:odd-log} yields
\[
\Res_{s=-m}L_+(s)=d_m.
\]
Inserting the explicit value of $d_m$ from Lemma~\ref{lem:a-1-coeff} gives
\[
\Res_{s=-m}L_+(s)=(-1)^{\ell}\,\frac{L_-(m)}{2^{m-1}\,\pi^{m+1}\,C^{m}}
=\frac{2i}{\pi}\,\frac{L_-(m)}{(2\pi i C)^{m}},
\]
which matches the residue identity for $L_+$ recorded in \cite[\S10.2]{BOCRS}.
\end{corollary}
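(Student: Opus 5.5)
The statement to prove is Corollary~\ref{cor:BOCRS-residue-from-heat}. The plan is to chain three results already established: Theorem~\ref{thm:Zm-odd-pole}, Proposition~\ref{prop:odd-log}, and Lemma~\ref{lem:a-1-coeff}.

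\textbf{Step 1 (the hypothesis of Theorem~\ref{thm:Zm-odd-pole}).} I first check that $\theta_m(t)$ really has the form $\kappa_m t\log t+O(t)$ with $\kappa_m\neq0$, once the lower-order terms are taken into account. Proposition~\ref{prop:odd-log} gives
\[
\theta_m(t)=\vartheta_m(t)+\frac{d_m}{m}\,t\log t+O(t).
\]
By Lemma~\ref{lem:bn-no-log}, $\vartheta_m(t)$ has a pure-power expansion, with exponents coming from poles at $s=(q+1)/m$ and $s\in-\Z_{\ge0}$. Its terms of order below $t^1$ therefore contribute to the Mellin integral over $(0,1]$ only simple poles at points $s=-\alpha$ with $\alpha<1$. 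None of these lies at $s=-1$. The $t^1$ coefficient contributes a simple pole at $s=-1$ in the integral, which becomes holomorphic after multiplying by $1/\Gamma(s)$.

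\textbf{Step 2 (the residue at $s=-1$).} Rerunning the argument of Theorem~\ref{thm:Zm-odd-pole} with $\theta_m-(\text{pure-power terms below order }1)$ in place of $\theta_m$ gives
\[
\operatorname{Res}_{s=-1}Z_m=\kappa_m=\frac{d_m}{m}.
\]
Since $Z_m(s)=L_+(ms)$, the change of variables $u=ms$ multiplies residues by $m$, so
\[
\operatorname{Res}_{u=-m}L_+(u)=d_m.
\]

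\textbf{Step 3 (the explicit value).} I substitute formula \eqref{eq:d_m-formula} from Lemma~\ref{lem:a-1-coeff}, or more conveniently the intermediate form obtained in its proof,
\[
d_m=-\frac{2L_-(m)}{\pi i}\,(2\pi i C)^{-m}=\frac{2i}{\pi}\,\frac{L_-(m)}{(2\pi iC)^m},
\]
using $-1/i=i$. To confirm this agrees with the first displayed form, I expand $(2\pi i C)^m=2^m\pi^m C^m i^m$ and use $i^{m+1}=(-1)^{\ell+1}$ for $m=2\ell+1$, which gives
\[
\frac{2i}{\pi\, i^m 2^m\pi^m C^m}=\frac{(-1)^\ell}{2^{m-1}\pi^{m+1}C^m}.
\]
Comparison with \cite[\S10.2]{BOCRS} is then a matter of reading off their normalization.

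\textbf{Main obstacle.} The key point is Step 1: Theorem~\ref{thm:Zm-odd-pole} is stated under a crude hypothesis that ignores the more singular terms of $\theta_m$ (such as $t^{-1/m}$). I would handle this by noting that these are pure powers with exponents different from $1$, so subtracting them changes $Z_m$ only by terms of the form $c\,(s+\alpha)^{-1}/\Gamma(s)$, each of which is regular at $s=-1$.
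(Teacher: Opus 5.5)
Your proposal is correct and follows the paper's route: Theorem~\ref{thm:Zm-odd-pole} gives $\Res_{s=-m}L_+(s)=m\kappa_m$, Proposition~\ref{prop:odd-log} gives $\kappa_m=d_m/m$, and substituting \eqref{eq:d_m-formula} (using $i^{1-m}=(-1)^\ell$) yields both displayed forms. Your Step~1 is a worthwhile addition that the paper omits. The literal hypothesis $\theta_m=\kappa_m t\log t+O(t)$ of Theorem~\ref{thm:Zm-odd-pole} is false, because $\vartheta_m$ contributes singular pure-power terms. Subtracting those terms (their Mellin contributions $c\,(s+\alpha)^{-1}/\Gamma(s)$ are regular at $s=-1$), and using the a priori meromorphy of $L_+$ from Theorem~\ref{thm:Z-meromorphic} to handle the mere $O(t)$ remainder by a limit, makes the residue computation sound.
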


For $m\ge2$, the genus-$0$ product
$W_m(\lambda)=\prod_{n\ge1}(1+\lambda/\tau_n^m)$ is well-defined and satisfies
$\frac{d}{d\lambda}\log W_m(\lambda)=\sum_{n\ge1}(\lambda+\tau_n^m)^{-1}$.
The $t\log t$ term in $\theta_m(t)$ forces new $\log\lambda$ corrections in the large-$\lambda$
asymptotics of the resolvent trace and of $\log W_m(\lambda)$, reflecting a genuinely
\emph{log-polyhomogeneous} Laplace-transform behavior.

\begin{lemma}[Laplace transform of $t\log t$]\label{lem:laplace-tlogt}
For $\Ree\lambda>0$,
\[
\int_0^\infty \e^{-\lambda t}\,t\log t\,dt
=\lambda^{-2}\,\bigl(1-\gamma-\log\lambda\bigr),
\]
where $\gamma$ is Euler's constant and $\log\lambda$ is the principal logarithm.
\end{lemma}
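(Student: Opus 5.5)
The identity follows by differentiating the Gamma integral with respect to its exponent. For $\Ree\lambda>0$ and $\Ree s>-1$, start from
\[
\int_0^\infty \e^{-\lambda t}\,t^{s}\,dt=\Gamma(s+1)\,\lambda^{-s-1},
\]
where $\lambda^{-s-1}=\exp(-(s+1)\log\lambda)$ uses the principal logarithm. For $\lambda>0$ this is the substitution $u=\lambda t$. Both sides are holomorphic in $\lambda$ on the right half-plane, so the identity holds there by analytic continuation, just as in the branch conventions of Remark~\ref{rem:sector-branches}.

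Next I differentiate in $s$ at $s=1$. On the left, $\partial_s t^s=t^s\log t$. Differentiation under the integral sign is justified by dominated convergence: for $s$ in a small real neighbourhood of $1$, say $s\in[1/2,3/2]$, we have the bound $|\e^{-\lambda t}t^{s}\log t|\le \e^{-\Ree\lambda\,t}(t^{1/2}+t^{3/2})|\log t|$, which is integrable on $(0,\infty)$. The left side therefore becomes $\int_0^\infty \e^{-\lambda t}\,t\log t\,dt$.

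On the right, the product rule gives
\[
\partial_s\bigl(\Gamma(s+1)\lambda^{-s-1}\bigr)=\Gamma'(s+1)\lambda^{-s-1}-\Gamma(s+1)\lambda^{-s-1}\log\lambda .
\]
At $s=1$ this equals $\lambda^{-2}\bigl(\Gamma'(2)-\log\lambda\bigr)$, using $\Gamma(2)=1$. It remains to evaluate $\Gamma'(2)=\Gamma(2)\psi(2)=\psi(1)+1=1-\gamma$. This follows from $\psi(1)=-\gamma$ together with the recurrence $\psi(x+1)=\psi(x)+1/x$, both standard facts (DLMF \cite{NISTDLMF}). Substituting yields $\lambda^{-2}(1-\gamma-\log\lambda)$.

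No step is a real obstacle. The only points requiring care are the justification of differentiation under the integral sign and the consistent use of the principal branch in $\lambda^{-s-1}$, and the dominating bound above handles both uniformly on compact subsets of $\{\Ree\lambda>0\}$.
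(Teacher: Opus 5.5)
Your proposal is correct and follows the same route as the paper: differentiate $\int_0^\infty \e^{-\lambda t}t^{s}\,dt=\Gamma(s+1)\lambda^{-s-1}$ with respect to the exponent, evaluate at $s=1$, and use $\Gamma'(2)=\Gamma(2)\psi(2)=1-\gamma$. Your dominated-convergence bound and the analytic continuation in $\lambda$ make explicit the justifications that the paper leaves implicit.
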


\begin{proof}
For $\Ree\lambda>0$ and $\Ree\alpha>-1$,
$\int_0^\infty \e^{-\lambda t}t^\alpha\,dt=\Gamma(\alpha+1)\lambda^{-\alpha-1}$.
Differentiate in $\alpha$ and evaluate at $\alpha=1$ to obtain
\[
\int_0^\infty \e^{-\lambda t}\,t\log t\,dt
=\Bigl(\Gamma'(2)-\Gamma(2)\log\lambda\Bigr)\lambda^{-2}.
\]
Since $\Gamma(2)=1$ and $\Gamma'(2)=\Gamma(2)\psi(2)=1-\gamma$, the claim follows.
\end{proof}

\begin{proposition}[Logarithms in product asymptotics for odd $m$]\label{prop:odd-loglambda}
Assume $m$ is odd and $\theta_m(t)=\kappa_m\,t\log t+O(t)$ as $t\to0^+$ with $\kappa_m\neq0$.
Define for $\Ree\lambda>0$ the resolvent trace
\[
R_m(\lambda):=\sum_{n\ge1}\frac{1}{\lambda+\tau_n^m}=\int_0^\infty \e^{-\lambda t}\theta_m(t)\,dt.
\]
Then, as $|\lambda|\to\infty$ in any fixed sector $\Sigma_\delta$,
\[
R_m(\lambda)=\cdots -\kappa_m\,\lambda^{-2}\log\lambda + O(|\lambda|^{-2}),
\]
so in particular $R_m(\lambda)$ contains a nonzero $\lambda^{-2}\log\lambda$ term.
Consequently, for the genus-$0$ product $W_m(\lambda)=\prod_{n\ge1}(1+\lambda/\tau_n^m)$ ($m\ge2$),
\[
\log W_m(\lambda)=\cdots + \kappa_m\,\frac{\log\lambda}{\lambda} + O(|\lambda|^{-1})
\qquad (|\lambda|\to\infty,\ \lambda\in\Sigma_\delta),
\]
so $\log W_m(\lambda)$ contains a nonzero $(\log\lambda)/\lambda$ term.
\end{proposition}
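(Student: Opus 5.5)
The plan is to split the heat trace as $\theta_m(t)=\kappa_m\,t\log t+\rho_m(t)$ with $\rho_m(t)=O(t)$ as $t\to0^+$. Then I would Laplace-transform each piece separately, which is legitimate by the representation $R_m(\lambda)=\int_0^\infty\e^{-\lambda t}\theta_m(t)\,dt$ for $\Ree\lambda>0$ (proved exactly as in Lemma~\ref{lem:laplace-resolvent}). The hypothesis only controls $\theta_m$ to order $t$, so the statement is understood modulo the higher-order singular terms written as ``$\cdots$''. These are the terms of order $t^{j}$ with $j<1$ in the small-$t$ expansion, namely the $t^{-1/m}$ and constant terms. They transform to $\lambda^{-1+1/m}$ and $\lambda^{-1}$ by Lemma~\ref{lem:watson}. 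I would make this precise by subtracting them off first, so that the honest hypothesis reads $\theta_m(t)=P(t)+\kappa_m t\log t+O(t)$ with $P$ a finite sum of powers $t^{j}$, $j<1$.

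For the logarithmic piece, I would split $\int_0^\infty=\int_0^1+\int_1^\infty$ for the function $\kappa_m t\log t\,\mathbf 1_{(0,1]}(t)$. Lemma~\ref{lem:laplace-tlogt} gives the full integral as $\kappa_m\lambda^{-2}(1-\gamma-\log\lambda)$. The tail $\int_1^\infty \e^{-\lambda t}t\log t\,dt$ is $O(\e^{-|\lambda|\sin\delta})$ on $\Sigma_\delta$. This yields $-\kappa_m\lambda^{-2}\log\lambda+O(|\lambda|^{-2})$.

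For the remainder, I would write $\theta_m-P-\kappa_m t\log t=:E(t)$, which satisfies $|E(t)|\le Ct$ on $(0,1]$. By Lemma~\ref{lem:theta-m-exp-decay}, together with the polynomial growth of $P$ and $t\log t$ (which are only integrated over $(0,1]$ after cutting), $E$ is also exponentially bounded on $[1,\infty)$. The estimate used in the proof of Lemma~\ref{lem:watson} then gives
\[
\Bigl|\int_0^1\e^{-\lambda t}E(t)\,dt\Bigr|\le C\,\Gamma(2)(|\lambda|\sin\delta)^{-2},
\]
and the tail is exponentially small. Combining the three pieces gives the claimed expansion of $R_m$, and $\kappa_m\ne0$ shows the $\lambda^{-2}\log\lambda$ term is genuinely present.

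For $\log W_m$ (with $m\ge2$, so the genus-$0$ product converges), I would integrate $R_m=(\log W_m)'$ along the segment from $1$ to $\lambda$ inside $\Sigma_\delta$, as in Theorem~\ref{thm:logW-asymp}. The key computation is
\[
\int^\lambda \mu^{-2}\log\mu\,d\mu=-\lambda^{-1}\log\lambda-\lambda^{-1}+\text{const},
\]
so $-\kappa_m\lambda^{-2}\log\lambda$ integrates to $\kappa_m\lambda^{-1}\log\lambda+O(|\lambda|^{-1})$. The $O(|\mu|^{-2})$ remainder integrates to a constant plus $O(|\lambda|^{-1})$, since $\int_\lambda^\infty$ converges along rays inside the sector. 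The singular terms integrate to the terms hidden in ``$\cdots$'', namely $\lambda^{1/m}$ and $\log\lambda$.

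The main obstacle is bookkeeping rather than analysis. I need to make sure that no other term produces $\lambda^{-2}\log\lambda$, which holds because pure powers transform to pure powers. I also need to check that the constants of integration and the branch of $\log$ are fixed consistently, using the principal branch as in Remark~\ref{rem:sector-branches}.
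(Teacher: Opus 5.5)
Your proposal is correct and follows the paper's own route: cut the Laplace integral at $t=1$, dispose of the tail with Lemma~\ref{lem:theta-m-exp-decay}, extract $-\kappa_m\lambda^{-2}\log\lambda$ from Lemma~\ref{lem:laplace-tlogt}, bound the $O(t)$ remainder by $\int_0^1 \e^{-\Ree\lambda\,t}\,t\,dt\ll|\lambda|^{-2}$, and then integrate $R_m=(\log W_m)'$ from $1$ to $\lambda$ using the antiderivative of $\mu^{-2}\log\mu$; subtracting the more singular power terms $P(t)$ first makes explicit what the paper leaves inside the ``$\cdots$'', since $\theta_m=\kappa_m t\log t+O(t)$ cannot hold literally. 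Two small precisions: for odd $m\ge3$, $P$ contains more than $t^{-1/m}$ and $t^0$ (for $m=3$ the correction $-3\alpha_1a_n$ in $\tau_n^3$ gives a term $\alpha_1\Gamma(\tfrac23)\,t^{1/3}$, hence a $\lambda^{-1/3}$ term in $\log W_3$), which your ``finite sum of $t^{j}$, $j<1$'' formulation already absorbs; and for $m=1$ the leading $t^{-1}$ is not integrable, so $R_1$ diverges and both your argument and the statement tacitly require $m\ge3$.
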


\begin{proof}
Split the Laplace integral defining $R_m(\lambda)$ at $1$.
The contribution from $[1,\infty)$ is exponentially small in sectors by Lemma~\ref{lem:theta-m-exp-decay}.
On $(0,1]$, write $\theta_m(t)=\kappa_m\,t\log t + O(t)$ and apply Lemma~\ref{lem:laplace-tlogt}
(and the elementary bound $\int_0^1 \e^{-\lambda t}t\,dt\ll |\lambda|^{-2}$ in sectors).
This yields the stated $-\kappa_m\lambda^{-2}\log\lambda$ term in $R_m(\lambda)$.

Finally, since $R_m(\lambda)=\frac{d}{d\lambda}\log W_m(\lambda)$ away from the poles,
integrate term-by-term along the segment from $1$ to $\lambda$ inside $\Sigma_\delta$,
the integral of $-\kappa_m\lambda^{-2}\log\lambda$ produces $\kappa_m(\log\lambda)/\lambda$ (up to lower-order
$|\lambda|^{-1}$ contributions absorbed in the $O(|\lambda|^{-1})$ term).
\end{proof}

\begin{remark}[Interpretation]
The parity dichotomy can be read as follows.
Even powers $\tau_n^{2p}$ behave, from the heat-kernel/Mellin viewpoint, like a classical
polyhomogeneous spectrum: the associated zeta and determinant calculus is clean, with special values
at nonpositive integers and pure inverse-power asymptotics for $\log W_{2p}$.
Odd powers $\tau_n^{2\ell+1}$ exhibit an intrinsic log-polyhomogeneous obstruction
($t\log t$ in $\theta_m$), and this obstruction is \emph{equivalent} to the existence of
poles of $L_+(s)$ at negative odd integers, with residues governed by the odd Dirichlet series $L_-$
as in \cite[\S10.2]{BOCRS}.
\end{remark}

\bibliographystyle{alpha}
\bibliography{References}

\end{document}